\pdfoutput=1
\documentclass[10pt,reqno,a4paper]{amsart}

\usepackage{iftex}
\ifPDFTeX
  \usepackage[utf8]{inputenc}
  \usepackage[T1]{fontenc}
\fi
\usepackage[english]{babel}
\usepackage[a4paper,left=0.98in,right=0.98in,top=1.02in,bottom=1.02in]{geometry}

\usepackage{microtype}
\usepackage{mathpazo}
\usepackage{euler}

\usepackage{amsmath,amssymb,amsthm,mathtools,mathrsfs,bm,eucal,tensor}
\usepackage{stmaryrd}
\usepackage{dsfont}
\usepackage{aliascnt}

\usepackage{graphicx}
\usepackage[dvipsnames,x11names]{xcolor}

\usepackage[all,cmtip]{xy}
\usepackage{tikz}
\usetikzlibrary{
  arrows.meta,
  calc,
  positioning,
  fit,
  backgrounds,
  shapes.geometric,
  decorations.markings
}
\usepackage{tikz-cd}

\tikzset{
  >=Stealth,
  box/.style={draw, rounded corners, inner sep=6pt},
  v/.style={draw, circle, inner sep=1.4pt},
  arr/.style={->, thick},
  darr/.style={->, thick, dashed},
  midarrow/.style={
    postaction={decorate},
    decoration={markings,mark=at position 0.58 with {\arrow{Stealth}}}
  }
}

\usepackage{enumitem}
\usepackage{comment,braket,xspace,csquotes}

\numberwithin{equation}{section}

\theoremstyle{plain}
\newtheorem{theorem}{Theorem}[section]

\newaliascnt{lemma}{theorem}
\newtheorem{lemma}[lemma]{Lemma}
\aliascntresetthe{lemma}

\newaliascnt{proposition}{theorem}
\newtheorem{proposition}[proposition]{Proposition}
\aliascntresetthe{proposition}

\newaliascnt{corollary}{theorem}
\newtheorem{corollary}[corollary]{Corollary}
\aliascntresetthe{corollary}

\theoremstyle{definition}

\newaliascnt{definition}{theorem}
\newtheorem{definition}[definition]{Definition}
\aliascntresetthe{definition}

\newaliascnt{notation}{theorem}
\newtheorem{notation}[notation]{Notation}
\aliascntresetthe{notation}

\newaliascnt{remark}{theorem}
\newtheorem{remark}[remark]{Remark}
\aliascntresetthe{remark}

\newaliascnt{assumption}{theorem}
\newtheorem{assumption}[assumption]{Assumption}
\aliascntresetthe{assumption}

\usepackage{hyperref}
\hypersetup{
  colorlinks=true,
  linkcolor=blue!60!black,
  citecolor=blue!60!black,
  urlcolor=blue!60!black
}
\usepackage[capitalize,nameinlink]{cleveref}

\Crefname{theorem}{Theorem}{Theorems}

\Crefname{lemma}{Lemma}{Lemmas}

\Crefname{proposition}{Proposition}{Propositions}

\Crefname{corollary}{Corollary}{Corollaries}

\Crefname{definition}{Definition}{Definitions}

\Crefname{notation}{Notation}{Notations}

\Crefname{remark}{Remark}{Remarks}

\Crefname{assumption}{Assumption}{Assumptions}

\DeclareMathOperator{\Spec}{Spec}
\DeclareMathOperator{\Proj}{Proj}
\DeclareMathOperator{\End}{End}
\DeclareMathOperator{\Sym}{Sym}
\DeclareMathOperator{\CH}{CH}
\DeclareMathOperator{\Ext}{Ext}

\newcommand{\Ad}{\mathrm{Ad}}

\newcommand{\C}{\mathbb{C}}
\newcommand{\R}{\mathbb{R}}
\newcommand{\Z}{\mathbb{Z}}
\newcommand{\NN}{\mathbb{N}}
\newcommand{\PP}{\mathbb{P}}
\newcommand{\RP}{\mathbb{RP}}
\newcommand{\Bl}{\mathrm{Bl}}
\newcommand{\OO}{\mathcal{O}}

\title[Geometry of the Donaldson--Friedman Pushout]{Geometry of the Donaldson--Friedman Pushout: Twistor degenerations and instanton charge}
\author{Amedeo Altavilla}
\address{Dipartimento di Matematica, Universit\`a degli Studi di Bari Aldo Moro, Via Edoardo Orabona, 4, 70125 Bari, Italy}
\email{amedeo.altavilla@uniba.it}

\author{Maur\'icio Corr\^ea}
\address{Dipartimento di Matematica, Universit\`a degli Studi di Bari Aldo Moro, Via Edoardo Orabona, 4, 70125 Bari, Italy}
\email{mauricio.barros@uniba.it}

\date{}
\subjclass[2020]{Primary 14C17, 53C28, 14D06; Secondary 14J30, 14F43, 58E15}
\keywords{Twistor spaces, Donaldson--Friedman construction, Ferrand pushout, operational Chow ring, semistable degeneration, Kato--Nakayama space, Ward bundles, Hartshorne--Serre construction, instantons}

\begin{document}

 \begin{abstract}
We study the Donaldson--Friedman semistable twistor degeneration by combining the scheme-theoretic geometry of its Ferrand pushout with the logarithmic topology of its Kato--Nakayama realisation. For the central fibre
$
Z_0=\widetilde Z_1\cup_Q\widetilde Z_2,
$
the Ferrand description yields an explicit equaliser presentation of the operational Chow ring and a componentwise specialisation formula, with sharp restrictions on surfaces that glue across the exceptional quadric. The logarithmic structure of the same normal-crossing fibre retains data not visible in ordinary intersection theory: after fixing the phase of the smoothing parameter, the Kato--Nakayama space over $Q$ is the unit circle bundle of the normal line bundle, and over a ruling fibre its anti-diagonal quotient is diffeomorphic to $\mathbb{RP}^3$. Restriction to curves in $Q$ gives a log-topological refinement of the algebraic intersection data. For bundles $E_0$ on $Z_0$ obtained by gluing Ward or Hartshorne--Serre data from the two components, we prove additivity of the second Chern cycle $c_2(E_0)\cap[Z_0]$. If $E_0$ extends over the semistable smoothing, its polarised charge is the sum of the component charges; under the usual reality and triviality conditions of the Ward correspondence, the bundle on a smooth fibre determines an anti-self-dual $SU(2)$-instanton on the connected sum of the underlying four-manifolds.
\end{abstract}
\maketitle

\section{Introduction}

Twistor theory relates four-dimensional differential geometry to
complex geometry by encoding geometric and gauge-theoretic structures
on an oriented four-manifold $X$ in a complex threefold $Z$, its
twistor space. In particular, the Atiyah--Ward correspondence
\cite{AtiyahWard1977} describes anti-self-dual Yang--Mills instantons
in terms of holomorphic bundles on $Z$, with the instanton number
recorded by the second Chern class.

The Donaldson--Friedman construction starts with twistor spaces
$Z_1,Z_2$ and chosen twistor lines $\ell_i\subset Z_i$. Blowing up
along $\ell_i$ gives $\widetilde Z_i=\Bl_{\ell_i}(Z_i)$ with
exceptional quadrics $Q_i\simeq\PP^1\times\PP^1$. After identifying
$Q_1$ and $Q_2$ by an isomorphism $\sigma$ exchanging the two rulings, one obtains
\[
Z_0=\widetilde Z_1\cup_Q\widetilde Z_2,
\qquad Q\simeq Q_1\simeq Q_2,
\]
the central fibre of a semistable smoothing $\pi:\mathcal Z\to\Delta$
with local model $t=uv$ along $Q$. For $t\neq0$, the smooth fibre
$Z_t$ is a twistor space of $X_1\#X_2$ \cite{DF,Clemens,Clemens1983}.
Scheme-theoretically, $Z_0$ is a Ferrand pushout \cite{Fer03}. We study
this singular fibre directly, rather than merely as an intermediate
stage in the smoothing.

The normal-crossing fibre carries two complementary structures dictated
by the same semistable equation $t=uv$. Scheme-theoretically, the Ferrand
pushout expresses $Z_0$ as the gluing of two smooth components along $Q$.
This makes its operational intersection theory accessible through descent
to the components and turns the matching problem into a calculation on the
double locus. Logarithmically, the semistable structure retains the
boundary data of the two branches, and the Kato--Nakayama realisation
records the angular information that disappears on passing to the
underlying central fibre \cite{KatoNakayama1999}. More generally,
Kato--Nakayama spaces recover the topology of log-smooth degenerations
from their logarithmic special fibres \cite{AchingerOgus2020}, and have
been used to describe explicitly the topology of reducible toric
degenerations \cite{Arguz2021}.

Our approach exploits both structures on the same Donaldson--Friedman
fibre. The Ferrand description, together with operational Chow descent,
controls limiting algebraic cycles and their compatibility along $Q$.
The logarithmic structure supplies the topological counterpart of this
matching problem: after fixing the phase of the smoothing parameter, its
Kato--Nakayama realisation retains the phase with which the two branches
approach the double locus. In this setting both sides are explicit. The
normal geometry of $Q$ governs the restrictions of Chow classes and also
determines the fixed-phase circle bundle. Restricting this bundle to a
limiting curve $c\subset Q$ gives a principal circle bundle over $c$, and
therefore refines the ordinary algebraic trace by log-topological data.
Thus the algebraic and logarithmic constructions are not independent
additions to the degeneration: they give complementary invariants of the
same singular twistor fibre and meet again along its double locus.

\begin{samepage}
The preceding point of view leads to three groups of results.

\medskip

\noindent\textbf{Algebraic structure: operational Chow theory and specialisation.}
Since $Z_0$ is singular, its natural cohomological intersection theory is
Fulton's operational Chow ring. We prove the explicit equaliser description
\[
A^\bullet(Z_0)\cong
\Bigl\{(\alpha_1,\alpha_2)\in
\CH^\bullet(\widetilde Z_1)\oplus\CH^\bullet(\widetilde Z_2)
\ \Big|\ j_1^*\alpha_1=\sigma^*j_2^*\alpha_2\Bigr\};
\]
see \Cref{thm:CH-pushout-generators-relations}.
\end{samepage}
Thus a global operational
class is determined by ordinary Chow classes on the two smooth branches,
subject only to agreement on the double locus. Fulton's specialisation map
is compatible with this description componentwise: a cycle on a smooth
fibre specialises to the sum of its traces on the two branches, and their
refined traces agree on $Q$; see
\Cref{prop:sp-decomp,cor:sp-by-components}.

This description has a rigid geometric consequence. For irreducible reduced
surfaces satisfying the hypotheses of \Cref{Thm2}, compatibility of the
trace classes on $Q$ leaves only two possibilities:
\[
(d_1,d_2)=(2,2)\qquad\text{or}\qquad(d_1,d_2)=(1,1).
\]
In particular, the gluing condition cannot hold if
$\max\{d_1,d_2\}\geq3$; see \Cref{prop:surface-trace,Thm2}. Hence the
equaliser condition is not merely a description of the ring: it gives
explicit restrictions on algebraic subvarieties that can occur across the
Donaldson--Friedman central fibre.

\medskip

\noindent\textbf{Logarithmic topology of the neck.}
The logarithmic structure is used here not merely as a language for the
normal-crossing fibre. It provides the topological counterpart of the
algebraic matching problem. Operational Chow theory detects the trace of a
limiting cycle on $Q$, whereas the Kato--Nakayama realisation retains the
phase data of the two branches along that trace. The same semistable
equation $t=uv$ that governs algebraic matching also governs this angular
topology. Fixing the phase $\theta\in S^1$ of the smoothing parameter, we
identify the corresponding branchwise part of the Kato--Nakayama space over
$Q$ with the unit circle bundle
\[
Q^{\log}\big|_\theta\simeq
S\!\left(\mathcal N_{Q/\widetilde Z_1}\right)\longrightarrow Q.
\]
The fixed-phase neck retracts onto this bundle; see
\Cref{prop:strong-def-retract-fixed-theta-rewrite}. On a ruling fibre
$F\simeq\PP^1$, its restriction is the Hopf circle bundle and therefore has
total space $S^3$. Combining it with the Hopf fibration over $F$ and taking
the anti-diagonal circle quotient gives a principal circle bundle of first
Chern class $2$, whose total space is
\[
L(2,1)\simeq\RP^3;
\]
see \Cref{thm:DF-diagram-KN-Clemens}. This $\RP^3$ is auxiliary and is not
the $S^3$ along which the connected sum of the underlying four-manifolds is
performed. Restricting the fixed-phase circle bundle to curves in $Q$ gives
a logarithmic refinement of their ordinary intersection classes; see
\Cref{prop:KN-circle-on-c}.

\medskip

\noindent\textbf{Bundle-theoretic applications and charge.}
We finally apply these descriptions to bundles arising from Ward and
Hartshorne--Serre data. In both constructions the second Chern cycle is
additive. In the Hartshorne--Serre notation, for the glued rank-$2$ bundle
$E_0$ and its restrictions $E_i$ one has
\[
c_2(E_0)\cap[Z_0]
=
(\phi_1)_*\bigl(c_2(E_1)\cap[\widetilde Z_1]\bigr)
+
(\phi_2)_*\bigl(c_2(E_2)\cap[\widetilde Z_2]\bigr).
\]
The Ward construction satisfies the same formula; see
\Cref{prop:no-neck-term-ward,prop:c2-pushout-general}. If the glued bundle
extends to a bundle on the semistable family and $H_0$ is the limiting
polarisation, with $H_i=\phi_i^*H_0$, then for $t\neq0$ sufficiently small
its polarised charge satisfies
\[
\mathrm{charge}_H(E_t)
=
\deg\bigl((c_2(E_1)H_1)\cap[\widetilde Z_1]\bigr)
+
\deg\bigl((c_2(E_2)H_2)\cap[\widetilde Z_2]\bigr);
\]
see \Cref{prop:no-neck-term-ward,prop:HS-charge-additivity}. Thus no
additional term supported on the double locus occurs in the charge formula.
Under the usual reality, triviality-on-real-twistor-lines, and determinant
conditions of the Ward correspondence, the smoothed rank-$2$ bundle
determines an anti-self-dual $SU(2)$-connection on $X_1\#X_2$; see
\Cref{cor:instantons-HS}.

These results complement Honda's deformation-theoretic treatment of
Donaldson--Friedman triples \cite{Honda1999,Honda2003}, where the singular
fibre is studied primarily through its deformation to a smooth twistor
space. Our emphasis is different: the same central fibre is treated both
as a singular scheme and as a logarithmic special fibre. Kimura's descent
theorem \cite{Kimura1992} supplies the algebraic mechanism behind the
operational Chow calculation, whereas Kato--Nakayama theory supplies the
log-topological realisation of the semistable structure
\cite{KatoNakayama1999,AchingerOgus2020,Arguz2021}. We are not aware of a
previous treatment of the Donaldson--Friedman degeneration in which these
two descriptions are used together to obtain, from the same central fibre,
both explicit algebraic matching data and the topology of the smoothing
neck.

Section~\ref{sec:CH-pushout} develops the operational Chow theory and
specialisation, Section~\ref{sec:KN} the logarithmic topology of the
neck, and Sections~\ref{sec:instantons-pushout} and
\ref{sec:serre-pushout-instantons} the bundle and charge applications.

\section{The operational Chow ring of the pushout}\label{sec:CH-pushout}

Let $X_1,X_2$ be compact anti-self-dual $4$--manifolds, let
$\pi_i:Z_i\to X_i$ be their twistor fibrations, and fix points
$x_i\in X_i$. We denote by
$
\ell_i:=\pi_i^{-1}(x_i)\subset Z_i
$
the corresponding twistor lines. Let
$
f_i:\widetilde Z_i:=\Bl_{\ell_i}(Z_i)\longrightarrow Z_i
$
be the blow-up along $\ell_i$, and let
$j_i:Q_i\hookrightarrow \widetilde Z_i$
be the exceptional divisor.
In the twistor setting one has the classical splitting
$
\mathcal N_{\ell_i/Z_i}\simeq \OO_{\PP^1}(1)\oplus \OO_{\PP^1}(1),
$
so the exceptional divisor is
$
Q_i=\PP(\mathcal N_{\ell_i/Z_i})\simeq \PP^1\times \PP^1.
$
The two rulings on $Q_i$ arise from the projection
$g_i:Q_i\to \ell_i\simeq \PP^1$ and from the lines in the normal
fibres.
Fix an isomorphism
\begin{equation}\label{eq:sigma-def}
\sigma:Q_1\xrightarrow{\sim}Q_2
\end{equation}
exchanging the two rulings and commuting with the real structures.

\begin{definition}
The \emph{Ferrand pushout} (\cite{Fer03}) associated with the closed immersions
\begin{equation*}\label{eq:ferrand-immersions}
j_1:Q_1\hookrightarrow \widetilde Z_1,
\qquad
j_2:Q_2\hookrightarrow \widetilde Z_2,
\end{equation*}
and with the identification $\sigma:Q_1\xrightarrow{\sim}Q_2$, is a
scheme $\widetilde Z$ obtained by gluing $\widetilde Z_1$ and
$\widetilde Z_2$ along the exceptional divisors $Q_1$ and $Q_2$ via
$\sigma$. More precisely, $\widetilde Z$ comes equipped with closed
immersions
\begin{equation*}\label{eq:phi-immersions}
\phi_1:\widetilde Z_1\hookrightarrow \widetilde Z,
\qquad
\phi_2:\widetilde Z_2\hookrightarrow \widetilde Z,
\end{equation*}
such that
$
\phi_1\circ j_1=\phi_2\circ j_2\circ \sigma,
$
and is universal for this gluing.
\end{definition}

In the Donaldson--Friedman construction, the singular central fibre is
precisely this Ferrand pushout:
\begin{equation*}\label{eq:Z-pushout}
\widetilde Z:=\widetilde Z_1\cup_Q \widetilde Z_2,
\qquad
Q\simeq Q_1\simeq Q_2.
\end{equation*}
The Ferrand pushout $\widetilde Z$ is locally analytically isomorphic
to the standard normal crossing model
$
\{uv=0\}\subset \C^4,$
with components $\{u=0\}$ and $\{v=0\}$ and double locus
$\{u=v=0\}$. In particular, $\widetilde Z$ is a simple normal
crossings threefold.

Because $\widetilde Z$ is singular, the natural intersection-theoretic
ring on $\widetilde Z$ is the \emph{operational Chow ring}
$A^\bullet(\widetilde Z)$ in the sense of \cite[Chapter~17]{Fulton}.
For a singular scheme $X$, the groups $\CH_*(X)$ need not carry the
contravariant ring structure available in the smooth case. Accordingly, the relevant cohomological theory is Fulton's operational Chow ring $A^\bullet(X)$
\cite[Chapter~17]{Fulton}; when $X$ is smooth, cap product with the
fundamental class identifies $A^\bullet(X)$ with $\CH^\bullet(X)$.
For the smooth varieties $\widetilde Z_i$ and $Q_i$, the canonical identifications are
\[
A^\bullet(\widetilde Z_i)=\CH^\bullet(\widetilde Z_i),\qquad
A^\bullet(Q_i)=\CH^\bullet(Q_i).
\]

For each $i=1,2$, let
\begin{equation}\label{eq:bi-def}
b_i:=g_i^*c_1\bigl(\OO_{\ell_i}(1)\bigr)\in\CH^1(Q_i),
\end{equation}
and let $w_i\in\CH^1(Q_i)$ be the class of a fibre of the other
ruling. Set
\begin{equation*}\label{eq:zi-def}
z_i:=b_i-w_i,
\qquad\text{so that}\qquad
b_i=z_i+w_i.
\end{equation*}
Since under $Q_i\simeq\PP^1\times\PP^1$ the tautological bundle
$\OO_{Q_i}(1)$ identifies with $\OO_{Q_i}(1,1)$, its first Chern
class is
\begin{equation}\label{eq:xi-zw}
\xi_i:=c_1\bigl(\OO_{Q_i}(1)\bigr)=b_i+w_i=z_i+2w_i.
\end{equation}

\begin{lemma}\label{lem:CH-abelian-pushout}
Let
$
\nu:\widetilde Z^\nu:=\widetilde Z_1\sqcup \widetilde Z_2
\longrightarrow \widetilde Z
$
be the normalisation map, and let $i:Q\hookrightarrow \widetilde Z$
be the inclusion of the double locus. Then $\nu$ is an envelope, it is
an isomorphism over $\widetilde Z\setminus Q$, and
$E:=\nu^{-1}(Q)=Q_1\sqcup Q_2$.
Consequently, Kimura's exact sequence for operational Chow cohomology
gives an exact sequence
\begin{equation}\label{eq:operational-descent-seq}
0\longrightarrow A^\bullet(\widetilde Z)
\xrightarrow{(\nu^*,\,i^*)}
\CH^\bullet(\widetilde Z_1)\oplus
\CH^\bullet(\widetilde Z_2)\oplus
\CH^\bullet(Q_1)
\xrightarrow{\ d\ }
\CH^\bullet(Q_1)\oplus \CH^\bullet(Q_2),
\end{equation}
where
\begin{equation}\label{eq:d-def}
d(\alpha_1,\alpha_2,\beta)
=
\bigl(j_1^*\alpha_1-\beta,\ \sigma^*j_2^*\alpha_2-\beta\bigr).
\end{equation}

\end{lemma}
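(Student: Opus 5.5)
The plan is to verify the three geometric claims about $\nu$ first, and then feed them into Kimura's descent theorem \cite{Kimura1992}. Recall that theorem. Let $p:X'\to X$ be a proper surjective envelope, let $S\subset X$ be a closed subscheme such that $p$ is an isomorphism over $X\setminus S$, and set $E=p^{-1}(S)$. Then there is an exact sequence
\[
0\to A^\bullet(X)\to A^\bullet(X')\oplus A^\bullet(S)\to A^\bullet(E).
\]
The last map is the difference of the two pullbacks to $E$. We will apply it with $X=\widetilde Z$, $X'=\widetilde Z^\nu$ and $S=Q$.

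\textbf{Step 1: geometry of $\nu$.} Since $\widetilde Z$ is the Ferrand pushout, $\phi_1\sqcup\phi_2$ is finite and surjective. By the local model $\{uv=0\}$, it is an isomorphism over $\widetilde Z\setminus Q$: away from $u=v=0$ exactly one branch passes through each point. Over $Q$ it is two-to-one, with one copy on each branch. Hence $\nu^{-1}(Q)=j_1(Q_1)\sqcup j_2(Q_2)$ scheme-theoretically. This holds because $\{uv=0,\,u=v=0\}$ pulled back to the branch $\{u=0\}$ is just $\{u=0,v=0\}$, a reduced divisor on that smooth branch.

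The envelope property is then formal. A subvariety $V\subset\widetilde Z$ not contained in $Q$ is the birational image of its strict transform. A subvariety $V\subset Q$ lifts isomorphically to $Q_1$ via $\nu|_{Q_1}\colon Q_1\xrightarrow{\sim}Q$. Here I would identify $Q$ with $Q_1$, so that $i\circ$ corresponds to $\phi_1\circ j_1$.

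\textbf{Step 2: apply Kimura.} Use the identifications $A^\bullet=\CH^\bullet$ for the smooth schemes $\widetilde Z_1$, $\widetilde Z_2$, $Q_1$, $Q_2$. Use also $A^\bullet(\widetilde Z^\nu)=A^\bullet(\widetilde Z_1)\oplus A^\bullet(\widetilde Z_2)$ and $A^\bullet(E)=\CH^\bullet(Q_1)\oplus\CH^\bullet(Q_2)$, where $A^\bullet$ of a disjoint union is the product. Kimura's sequence then takes the displayed form, and it remains to compute the two pullback maps to $E$.

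On $Q_1$, the map $E\to\widetilde Z^\nu$ is $j_1$, and $E\to Q$ is the identity under $Q\simeq Q_1$. This gives the component $j_1^*\alpha_1-\beta$.

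On $Q_2$, the map $E\to\widetilde Z^\nu$ is $j_2$. The map $E\to Q\simeq Q_1$ is $\sigma^{-1}$, by the relation $\phi_1 j_1=\phi_2 j_2\sigma$. This gives $j_2^*\alpha_2-(\sigma^{-1})^*\beta$. Applying the isomorphism $\sigma^*\colon\CH^\bullet(Q_2)\to\CH^\bullet(Q_1)$ to this second coordinate turns it into $\sigma^*j_2^*\alpha_2-\beta$. An automorphism of the target does not change the kernel, so exactness is preserved, and we obtain exactly $d$ as defined in \eqref{eq:d-def}. The signs also match the paper's convention; any sign convention gives the same kernel.

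\textbf{Main obstacle.} The delicate point is hypothesis-checking for Kimura's theorem. It requires a proper envelope which is an isomorphism off $S$. It also requires the correct scheme structure on $E$, since Kimura uses $p^{-1}(S)$, and we need this to be reduced and equal to $Q_1\sqcup Q_2$. Both facts are settled by the local normal-crossing model. The remaining care is only the bookkeeping of $\sigma$ versus $\sigma^{-1}$ in the second coordinate.
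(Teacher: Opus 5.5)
Your proposal is correct and follows the same route as the paper: you check that $\nu$ is a finite (hence proper) envelope, that it is an isomorphism off $Q$, and that $\nu^{-1}(Q)=Q_1\sqcup Q_2$, and then you apply Kimura's descent sequence with centre $Q$ and identify the difference of pull-backs with $d$. Your explicit bookkeeping fills in the step the paper only asserts (``under these identifications, this is the map $d$''): the map $Q_2\to Q\simeq Q_1$ is $\sigma^{-1}$, and applying the isomorphism $\sigma^*$ to the second coordinate does not change the kernel.
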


\begin{proof}
The map $\nu$ is finite, hence proper, and is an isomorphism over $\widetilde Z\setminus Q$. We claim that it is an
envelope. Let $V\subset \widetilde Z$ be an irreducible closed
subvariety.
If $V\subset Q$, let $V_1\subset Q_1\subset \widetilde Z^\nu$ be the
corresponding closed subvariety under the identification
$Q_1\simeq Q$. Then the induced morphism $V_1\to V$ is an isomorphism,
hence birational. If $V\not\subset Q$, then the generic point of $V$
lies on exactly one irreducible component of $\widetilde Z$, say
$\widetilde Z_1$ or $\widetilde Z_2$, and the closure of that generic
point in the corresponding branch gives a closed subvariety of
$\widetilde Z^\nu$ mapping birationally onto $V$. Thus $\nu$ is an
envelope.
Applying Kimura's exact sequence \cite{Kimura1992} for operational
Chow cohomology to the envelope $\nu$, with centre $Q\subset \widetilde Z$ and inverse
image $E=Q_1\sqcup Q_2$, yields
\[
0\longrightarrow A^\bullet(\widetilde Z)
\longrightarrow
A^\bullet(\widetilde Z^\nu)\oplus A^\bullet(Q)
\longrightarrow
A^\bullet(E).
\]
More explicitly, the second arrow sends an operational class
$\gamma\in A^\bullet(\widetilde Z)$ to
\[
\bigl(\nu^*\gamma,\ i^*\gamma\bigr),
\]
while the third arrow is the difference of the two pull-backs from
$\widetilde Z^\nu$ and from $Q$ to $E=Q_1\sqcup Q_2$. Under these identifications, this is the map $d$ in
\eqref{eq:d-def}.
Using
\[
A^\bullet(\widetilde Z^\nu)
=
\CH^\bullet(\widetilde Z_1)\oplus \CH^\bullet(\widetilde Z_2),
\qquad
A^\bullet(E)
=
\CH^\bullet(Q_1)\oplus \CH^\bullet(Q_2),
\]
and identifying $A^\bullet(Q)$ with $\CH^\bullet(Q_1)$ via the chosen
copy $Q=Q_1$, one gets \eqref{eq:operational-descent-seq} with the map
\eqref{eq:d-def}. This proves the asserted descent sequence.
\end{proof}

\begin{lemma}
\label{lem:chow-quadric}
For each $k=1,2$, the Chow ring of $Q_k\simeq\PP^1\times\PP^1$ is
\begin{equation}\label{eq:quadric-ring-bw}
\CH^\bullet(Q_k)\cong
\frac{\Z[b_k,w_k]}{\langle\, b_k^2,\ w_k^2\,\rangle},
\end{equation}
where $b_k$ is the class of a fibre of $g_k:Q_k\to\ell_k$ and $w_k$
is the class of a fibre of the opposite ruling. After the change of
variables $z_k:=b_k-w_k$,
\begin{equation}\label{eq:quadric-ring-pres}
\CH^\bullet(Q_k)\cong
\frac{\Z[z_k,w_k]}{\langle\, w_k^2,\ z_k^2+2z_kw_k\,\rangle},
\end{equation}
equivalently
\begin{equation*}\label{eq:quadric-relations}
w_k^2=0,\qquad (z_k+w_k)^2=0,\qquad z_k^2=-2z_kw_k.
\end{equation*}
The tautological class satisfies
\begin{equation*}\label{eq:xi-taut}
\xi_k:=c_1\bigl(\OO_{Q_k}(1)\bigr)=b_k+w_k=z_k+2w_k.
\end{equation*}
The isomorphism $\sigma:Q_1\xrightarrow{\sim}Q_2$ exchanging the two rulings
acts by
\begin{equation}\label{eq:sigma-on-w}
\sigma_*(w_1)=b_2=z_2+w_2,\qquad\sigma_*(b_1)=w_2,
\end{equation}
equivalently
\begin{equation}\label{eq:sigma-on-z}
\sigma_*(z_1)=-z_2,\qquad\sigma_*(\xi_1)=\xi_2.
\end{equation}
\end{lemma}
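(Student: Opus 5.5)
The plan is to reduce everything to the standard presentation of the Chow ring of a product of projective lines, and then to transport the fibre classes along $\sigma$. First I will fix the identification $Q_k=\PP(\mathcal N_{\ell_k/Z_k})\simeq\PP^1\times\PP^1$. Since $\mathcal N_{\ell_k/Z_k}\simeq\OO_{\PP^1}(1)^{\oplus2}=\OO_{\PP^1}(1)\otimes\OO_{\PP^1}^{\oplus 2}$, we get $\PP(\mathcal N_{\ell_k/Z_k})\simeq\PP(\OO^{\oplus2})=\ell_k\times\PP^1$. The first factor is $g_k$ and the second gives the opposite ruling. By the projective bundle formula (or the Künneth formula for $\CH$ of a product with $\PP^1$), $\CH^\bullet(Q_k)$ is free on $1,b_k,w_k,b_kw_k$. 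Here $b_k=g_k^*c_1(\OO_{\ell_k}(1))$ is the class of a $g_k$-fibre and $w_k$ is the class of an opposite fibre.

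The relations $b_k^2=w_k^2=0$ hold because each is pulled back from the square of a point class on $\PP^1$, which vanishes. Also $b_kw_k$ is the class of a point, since two fibres of opposite rulings meet transversally in one point. This yields \eqref{eq:quadric-ring-bw}. The presentation \eqref{eq:quadric-ring-pres} is then a linear change of variables. Substituting $b_k=z_k+w_k$ gives $0=b_k^2=z_k^2+2z_kw_k+w_k^2$, and using $w_k^2=0$ this becomes $z_k^2=-2z_kw_k$. The substitution $b_k\mapsto z_k+w_k$ is invertible over $\Z$, so the two presentations agree.

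For the tautological class, the isomorphism $\PP(\OO(1)\otimes V)\simeq\PP(V)$ twists the tautological line bundle by $g_k^*\OO(1)$, with the sign fixed by the paper's convention. On $\ell_k\times\PP^1$ this gives $\OO_{Q_k}(1)\simeq\OO(1,1)$, hence $\xi_k=b_k+w_k=z_k+2w_k$.

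Finally, $\sigma$ exchanges the rulings by hypothesis. It therefore maps a $g_1$-fibre isomorphically onto a fibre of the non-$g_2$ ruling of $Q_2$, and a fibre of the other ruling of $Q_1$ onto a $g_2$-fibre. Pushing forward fundamental classes gives $\sigma_*(b_1)=w_2$ and $\sigma_*(w_1)=b_2=z_2+w_2$. By linearity, $\sigma_*(z_1)=\sigma_*(b_1)-\sigma_*(w_1)=w_2-b_2=-z_2$ and $\sigma_*(\xi_1)=w_2+b_2=\xi_2$, which is \eqref{eq:sigma-on-w}–\eqref{eq:sigma-on-z}.

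The only step requiring care is the identification of $\OO_{Q_k}(1)$ with $\OO(1,1)$. This depends on the projectivisation convention (lines versus quotients) and on the twist by $\OO(1)$. I would check it directly by restricting to a $g_k$-fibre, where $\OO(1)$ has degree $1$. I would then compute the degree on the opposite fibre $w_k$ via the Grothendieck relation $\xi_k^2-g_k^*c_1(\mathcal N)\xi_k+g_k^*c_2(\mathcal N)=0$, which gives $\xi_k^2=2b_k\xi_k-b_k^2=2$. That value is consistent with $(b_k+w_k)^2=2b_kw_k$, so $\xi_k=b_k+w_k$.
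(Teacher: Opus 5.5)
Your proposal is correct and follows the paper's proof essentially step for step. Both use the presentation of $\CH^\bullet(\PP^1\times\PP^1)$ with $b_k^2=w_k^2=0$ and $b_kw_k=[\mathrm{pt}]$, the substitution $b_k=z_k+w_k$, the identification $\OO_{Q_k}(1)\simeq\OO(1,1)$ via the twist $\PP(\OO(1)^{\oplus 2})\simeq\PP(\OO^{\oplus 2})$, and $\sigma$ exchanging the two fibre classes followed by linearity.

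Your Grothendieck-relation check ($\xi_k\cdot b_k=1$, $\xi_k^2=2$, hence $\xi_k=b_k+w_k$) is a worthwhile addition that the paper omits. As you anticipated, it depends on the convention. It is right for the quotient convention $\PP(E)=\Proj(\Sym E)$ applied to $\mathcal N_{\ell_k/Z_k}$, which is the convention the paper declares. With the blow-up normalisation $\OO_{\widetilde Z_k}(-Q_k)|_{Q_k}=\OO_{Q_k}(1)$, i.e.\ $Q_k=\Proj(\Sym\mathcal N_{\ell_k/Z_k}^\vee)$, the same relation gives $\xi_k^2=-2$ and $\xi_k=w_k-b_k$.
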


\begin{proof}
Since $Q_k\simeq\PP^1\times\PP^1$, one has $b_k^2=0$, $w_k^2=0$,
$b_kw_k=[\mathrm{pt}]$, giving \eqref{eq:quadric-ring-bw}.
Substituting $b_k=z_k+w_k$ into $b_k^2=0$ gives
$z_k^2+2z_kw_k+w_k^2=0$, hence $z_k^2+2z_kw_k=0$ since $w_k^2=0$,
which is \eqref{eq:quadric-ring-pres}.
Under $Q_k=\PP(\OO_{\PP^1}(1)\oplus\OO_{\PP^1}(1))$, the tautological
bundle is $\OO_{Q_k}(1,1)$, giving $\xi_k=b_k+w_k=z_k+2w_k$.
Since $\sigma$ exchanges the two rulings, $\sigma_*(w_1)=b_2$ and
$\sigma_*(b_1)=w_2$. Writing $b_2=z_2+w_2$ gives
\eqref{eq:sigma-on-w}. Subtracting gives $\sigma_*(z_1)=-z_2$, and
adding gives $\sigma_*(\xi_1)=\xi_2$.
\end{proof}

\begin{proposition}\label{prop:blowup-decomp}
For each $k=1,2$, the Chow groups of the blown-up space
$\widetilde Z_k=\Bl_{\ell_k}(Z_k)$ decompose as
\begin{equation}\label{eq:blowup-chow-decomp}
\CH^p(\widetilde Z_k)\cong
f_k^*\CH^p(Z_k)\oplus (j_k)_*g_k^*\CH^{p-1}(\ell_k).
\end{equation}
Since $\ell_k\simeq\PP^1$, the exceptional summand contributes:
\begin{itemize}
\item in degree $p=1$: one generator $[Q_k]=c_1(\OO_{\widetilde Z_k}(Q_k))$;
\item in degree $p=2$: one generator $(j_k)_*(b_k)$;
\item in degree $p\ge 3$: nothing.
\end{itemize}
Moreover, in $\CH^2(\widetilde Z_k)$:
\begin{equation}\label{eq:exceptional-line-class}
(j_k)_*(\xi_k)=f_k^*(i_k)_*[\ell_k],
\end{equation}
equivalently $(j_k)_*(z_k+2w_k)=f_k^*(i_k)_*[\ell_k]$,
where $i_k:\ell_k\hookrightarrow Z_k$ is the inclusion.
\end{proposition}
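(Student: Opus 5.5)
The plan is to read off everything from Fulton's blow-up formula \cite[Proposition~6.7 and Theorem~6.7]{Fulton}, applied to the smooth centre $\ell_k\simeq\PP^1$ of codimension $d=2$ in the smooth threefold $Z_k$. Write $N:=\mathcal N_{\ell_k/Z_k}$, with $Q_k=\PP(N)$ and $g_k:Q_k\to\ell_k$. For $d=2$ the sum over $0\le i\le d-2$ in Fulton's decomposition has only the term $i=0$. It therefore reads
\[
\CH_m(\widetilde Z_k)\cong f_k^*\CH_m(Z_k)\oplus (j_k)_*g_k^*\CH_{m-1}(\ell_k).
\]
Converting dimension to codimension gives \eqref{eq:blowup-chow-decomp}: a class in $\CH^{p-1}(\ell_k)=\CH_{2-p}(\ell_k)$ pulls back to $\CH_{3-p}(Q_k)$ and pushes forward to $\CH^p(\widetilde Z_k)$.

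The list of exceptional generators then follows from $\CH^\bullet(\PP^1)=\Z\oplus\Z[\mathrm{pt}]$.
\begin{itemize}
\item For $p=1$, the class $[\ell_k]$ gives $(j_k)_*g_k^*[\ell_k]=(j_k)_*[Q_k]=[Q_k]$, which equals $c_1(\OO_{\widetilde Z_k}(Q_k))\cap[\widetilde Z_k]$ since $Q_k$ is a Cartier divisor.
\item For $p=2$, the point class pulls back to the fibre class of $g_k$. This is $b_k=g_k^*c_1(\OO_{\ell_k}(1))$ from \eqref{eq:bi-def}, because $\deg\OO_{\ell_k}(1)=1$.
\item For $p\ge3$, the group $\CH^{p-1}(\ell_k)$ vanishes, so there is no exceptional contribution.
\end{itemize}

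For \eqref{eq:exceptional-line-class} I will use Fulton's key formula \cite[Theorem~6.2/Proposition~6.7(a)]{Fulton}:
\[
f_k^*(i_k)_*\alpha=(j_k)_*\bigl(c_{d-1}(\mathcal E)\cap g_k^*\alpha\bigr),
\qquad
\mathcal E:=g_k^*N/\OO_N(-1),
\]
where $\mathcal E$ is the excess bundle. With $d=2$ and $\alpha=[\ell_k]$, this gives $f_k^*(i_k)_*[\ell_k]=(j_k)_*c_1(\mathcal E)$, and $c_1(\mathcal E)=g_k^*c_1(N)-c_1(\OO_N(-1))$. Since $N\simeq\OO(1)\oplus\OO(1)=\OO_{\ell_k}(1)\otimes\OO^{2}$, there is a canonical splitting $Q_k=\ell_k\times\PP^1$. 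Under it, $\OO_N(-1)=g_k^*\OO_{\ell_k}(1)\otimes\OO_{\PP^1}(-1)$, so $c_1(\OO_N(-1))=b_k-w_k$. Together with $g_k^*c_1(N)=2b_k$, this gives $c_1(\mathcal E)=b_k+w_k=\xi_k=z_k+2w_k$, which is the claim via \eqref{eq:xi-zw}. As a cross-check, I will verify that $j_k^*[Q_k]=c_1(\mathcal N_{Q_k/\widetilde Z_k})=c_1(\OO_N(-1))=b_k-w_k$ has self-intersection $-2$ on $Q_k$, consistent with $\mathcal N_{\ell_k/Z_k}$ having degree $2$.

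The main obstacle is a convention issue. The symbol $\OO_{Q_k}(1)$ must be matched with the bundle whose first Chern class is $b_k+w_k$, and this depends on whether $\PP(N)$ means lines or quotients and on how the product splitting is normalised. I will fix Fulton's convention of lines, with $\OO_N(-1)\subset g_k^*N$. I will then check that $c_1(\mathcal E)$ agrees with the paper's $\xi_k=c_1(\OO_{Q_k}(1,1))$ as in \Cref{lem:chow-quadric}, and not with the tautological class $w_k-b_k$ under the opposite normalisation. If the conventions disagree, the identity is to be read with $\xi_k:=c_1(\mathcal E)$, which the computation above shows equals $b_k+w_k$ in any case.
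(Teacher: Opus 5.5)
Your proposal is correct and follows the paper's route. The decomposition and the exceptional generators come from Fulton's blow-up formula with $d=2$, and the identity is Fulton's key formula $f_k^*(i_k)_*[\ell_k]=(j_k)_*c_1\bigl(g_k^*\mathcal N_{\ell_k/Z_k}/\mathcal N_{Q_k/\widetilde Z_k}\bigr)$. The paper only cites this formula, whereas you compute the excess class as $2b_k-(b_k-w_k)=b_k+w_k=\xi_k$, which shows the identity holds whichever convention is used for $\OO_{Q_k}(1)$.

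Your cross-check $j_k^*[Q_k]=b_k-w_k$ (self-intersection $-2$) is also right, and you should trust it over the paper's later claim $j_k^*[Q_k]=-\xi_k$ in \Cref{lem:normal-exceptional}. Applying $j_k^*$ to the identity gives $\xi_k\cdot j_k^*[Q_k]=g_k^*i_k^*(i_k)_*[\ell_k]=0$, since $\CH^2(\ell_k)=0$. This holds for $b_k-w_k$ but fails for $-\xi_k$, because $-\xi_k^2=-2[\mathrm{pt}]$.
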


\begin{proof}
The decomposition \eqref{eq:blowup-chow-decomp} is the standard
blow-up formula for a smooth threefold along a smooth codimension-$2$
centre \cite[Theorem~6.7 and Example~15.4.2]{Fulton}. The generators
of the exceptional summand in each degree follow from
$\CH^{p-1}(\PP^1)\cong\Z$ for $p-1\in\{0,1\}$ and $0$ otherwise.
The identity \eqref{eq:exceptional-line-class} is
\cite[Example~15.4.2]{Fulton}: for a blow-up along a codimension-$2$
centre, the pull-back of the class of the centre equals the push-forward
of the tautological class on the exceptional divisor, i.e.\
$(j_k)_*\xi_k=f_k^*(i_k)_*[\ell_k]$.
\end{proof}

\begin{theorem}
\label{thm:CH-pushout-generators-relations}
With the notation of \Cref{lem:chow-quadric} and
\Cref{prop:blowup-decomp}, the following hold.

\begin{enumerate}

\item\label{item:gluing-relations}
The operational Chow ring of the singular pushout is given by
\begin{equation}\label{eq:A-equalizer-theorem}
A^\bullet(\widetilde Z)\cong
\Bigl\{
(\alpha_1,\alpha_2)\in
\CH^\bullet(\widetilde Z_1)\oplus \CH^\bullet(\widetilde Z_2)
\ \Big|\
j_1^*\alpha_1=\sigma^*j_2^*\alpha_2
\text{ in }\CH^\bullet(Q_1)
\Bigr\}.
\end{equation}
In particular, the pair $([Q_1],[Q_2])$ defines a class
$[Q]\in A^1(\widetilde Z)$, because
\begin{equation}\label{eq:Q-self-restriction}
j_1^*[Q_1]=-\xi_1=\sigma^*(-\xi_2)=\sigma^*j_2^*[Q_2].
\end{equation}
More generally, a pair of component classes defines an operational
class on $\widetilde Z$ if and only if its restrictions to the double
locus agree under $\sigma$.

\item\label{item:double-locus-ring}
The Chow ring of the double locus $Q$ is identified with either
branch:
\begin{equation*}\label{eq:double-locus-ring}
\CH^\bullet(Q)\cong\CH^\bullet(Q_1)\cong\CH^\bullet(Q_2)
\cong\frac{\Z[z,w]}{\langle\,w^2,\,z^2+2zw\,\rangle},
\end{equation*}
with the two branch descriptions related by
\eqref{eq:sigma-on-w}--\eqref{eq:sigma-on-z}.

\end{enumerate}
\end{theorem}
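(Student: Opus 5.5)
The plan is to read off part (1) directly from the exact sequence \eqref{eq:operational-descent-seq} of \Cref{lem:CH-abelian-pushout}.

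\emph{Identifying the image.} Exactness says that $A^\bullet(\widetilde Z)$ maps injectively onto $\ker d$. A triple $(\alpha_1,\alpha_2,\beta)$ lies in $\ker d$ exactly when
\[
\beta=j_1^*\alpha_1
\qquad\text{and}\qquad
\beta=\sigma^*j_2^*\alpha_2 .
\]
So the component $\beta$ is determined by $\alpha_1$. Projecting $\ker d$ to the first two factors is therefore an isomorphism onto the set of pairs with $j_1^*\alpha_1=\sigma^*j_2^*\alpha_2$. Its inverse is $(\alpha_1,\alpha_2)\mapsto(\alpha_1,\alpha_2,j_1^*\alpha_1)$.

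\emph{Ring structure.} Since $\nu^*$ and $i^*$ are ring homomorphisms, the resulting bijection is a ring isomorphism. The product on the right-hand side is componentwise, and the equaliser is a subring because $j_1^*$, $j_2^*$ and $\sigma^*$ are ring maps. This also gives the general statement that a pair defines an operational class if and only if its restrictions to $Q$ agree under $\sigma$.

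\emph{The class $[Q]$.} I would use the self-intersection formula $j_k^*[Q_k]=c_1(\mathcal N_{Q_k/\widetilde Z_k})=c_1(\OO_{Q_k}(-1))=-\xi_k$. This is the standard fact that the normal bundle of the exceptional divisor is the tautological line bundle $\OO(-1)$. Then \Cref{lem:chow-quadric} gives $\sigma^*\xi_2=\xi_1$, since $\sigma_*\xi_1=\xi_2$ and $\sigma$ is an isomorphism, so that $\sigma^*=(\sigma_*)^{-1}$. Hence \eqref{eq:Q-self-restriction} holds.

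\emph{Part (2).} This is immediate from \Cref{lem:chow-quadric}: $Q\simeq Q_1\simeq Q_2$, and the presentation \eqref{eq:quadric-ring-pres} applies to each copy, with $\sigma$ relating the generators as in \eqref{eq:sigma-on-w}--\eqref{eq:sigma-on-z}.

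\emph{Expected obstacle.} The main delicate point is the surjectivity onto $\ker d$, that is, that Kimura's sequence is exact in the middle and not merely a complex. This is the actual content of Kimura's descent theorem for envelopes, and it is already packaged into \Cref{lem:CH-abelian-pushout}. A secondary point is making sure that the identification $A^\bullet(Q)\cong\CH^\bullet(Q_1)$ is compatible with the component $\sigma^*j_2^*$ in the map $d$. Here I would check that the second component of $E\to Q$ is $\sigma^{-1}$ under the chosen identification $Q=Q_1$.
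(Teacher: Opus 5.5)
Your derivation of the equaliser, the ring structure and part (2) is the same as the paper's argument: eliminate $\beta$ from the sequence of \Cref{lem:CH-abelian-pushout}. That part is fine.

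The genuine gap is the self-intersection step $j_k^*[Q_k]=c_1(\OO_{Q_k}(-1))=-\xi_k$. The paper takes the same step through \Cref{lem:normal-exceptional}, and it mixes two conventions. Write $\mathcal N:=\mathcal N_{\ell_k/Z_k}$. The normal bundle of the exceptional divisor is the $\OO(-1)$ of $Q_k=\Proj\Sym(\mathcal N^\vee)$. The bundle $\OO_{Q_k}(1)\simeq\OO(1,1)$ that defines $\xi_k$ is instead the $\OO(1)$ of $\Proj\Sym\mathcal N$. Since $\mathcal N^\vee\simeq\OO_{\ell_k}(-1)\otimes\C^2$, one gets $\mathcal N_{Q_k/\widetilde Z_k}\simeq g_k^*\OO_{\ell_k}(1)\otimes p_k^*\OO_{\PP^1}(-1)$, where $p_k$ is the other ruling. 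Thus $j_k^*[Q_k]=b_k-w_k=z_k$.

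You can check this with the paper's own formulas:
\begin{itemize}
\item By \Cref{prop:blowup-decomp}, $(j_k)_*\xi_k=f_k^*(i_k)_*[\ell_k]$.
\item Since $\CH^2(\ell_k)=0$, $j_k^*f_k^*(i_k)_*[\ell_k]=g_k^*i_k^*(i_k)_*[\ell_k]=0$.
\item So the self-intersection formula forces $\xi_k\cdot c_1(\mathcal N_{Q_k/\widetilde Z_k})=0$.
\end{itemize}
This holds for $z_k$, because $(b_k+w_k)(b_k-w_k)=0$. It fails for $-\xi_k$, because $\xi_k^2=2[\mathrm{pt}]$. Equivalently, $Q_k^3=-\deg\mathcal N_{\ell_k/Z_k}=-2=z_k^2$, not $\xi_k^2=2$.

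Consequently $j_1^*[Q_1]=z_1$, while $\sigma^*j_2^*[Q_2]=\sigma^*z_2=-z_1$ by \eqref{eq:sigma-on-z}. The two restrictions differ by $2z_1\neq0$ in $\CH^1(Q_1)\cong\Z^2$. So $([Q_1],[Q_2])$ is not in the equaliser, and the ``in particular'' clause cannot be proved as stated.

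The pair that does lie in the equaliser is $([Q_1],-[Q_2])$, which equals $c_1\bigl(\OO_{\mathcal Z}(\widetilde Z_2)|_{Z_0}\bigr)$ on the semistable family. Its compatibility on $Q$ is Friedman's d-semistability $\mathcal N_{Q_1/\widetilde Z_1}\otimes\sigma^*\mathcal N_{Q_2/\widetilde Z_2}\simeq\OO_{Q_1}$. This is forced by any smoothing with smooth total space and local equation $t=uv$, because $\OO_{\mathcal Z}(Z_0)$ is trivial. It is exactly what the ruling-exchanging $\sigma$ achieves; with $-\xi_k$ the tensor product would be $\OO(-2,-2)$ instead.

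The rest of your proof stands once the $[Q]$ statement is corrected to $([Q_1],-[Q_2])$, proved with $z_k$ in place of $-\xi_k$.
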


\begin{proof}
For Item~\ref{item:gluing-relations}, eliminate the intermediate class $\beta$ from the descent sequence of \Cref{lem:CH-abelian-pushout}. This gives \eqref{eq:A-equalizer-theorem}; the product is componentwise because the pull-back maps in the descent sequence are ring homomorphisms.
For the pair $([Q_1],[Q_2])$, since $Q_k$ is a Cartier divisor in
$\widetilde Z_k$, one has $j_k^*[Q_k]=c_1(\mathcal
N_{Q_k/\widetilde Z_k})=-\xi_k$ (the normal bundle computation is given in \Cref{lem:normal-exceptional}). Since $\sigma^*\xi_2=\xi_1$,
we get \eqref{eq:Q-self-restriction}, so the pair is compatible.

Item \ref{item:double-locus-ring}: after identifying $Q$ with $Q_1$,
its Chow ring is the ring in \eqref{eq:quadric-ring-pres} of
\Cref{lem:chow-quadric}. The identification with $\CH^\bullet(Q_2)$
is via transport through $\sigma$, with change of basis
\eqref{eq:sigma-on-w}--\eqref{eq:sigma-on-z}.
\end{proof}

\subsection{Gluing surfaces in the pushout}
%\label{subsec:gluing-surfaces}
%======================================================================

We next determine how surfaces meet the exceptional quadrics. Since the central fibre is singular, the relevant traces are computed on the smooth quadrics $Q_i\simeq\PP^1\times\PP^1$, where ordinary Chow theory applies.

\begin{definition}
Let $\pi:Z\to X$ be a twistor space, and let $S\subset Z$ be an
irreducible surface. Assume that $S$ is generically transverse to the
twistor fibration and that $S\in |E|$ for some line bundle $E$ on $Z$.
The \emph{twistor degree} of $S$ is the integer
\begin{equation*}\label{eq:twistor-degree-def}
\deg_{\mathrm{tw}}(S):=\deg\bigl(c_1(E)|_L\bigr),
\end{equation*}
where $L$ is any twistor line not contained in $S$.
\end{definition}

The definition is well posed because twistor lines are numerically equivalent.
In the basic algebraic examples it recovers the usual degree: for
$Z=\PP^3$ it is the projective degree, while on the flag threefold a
divisor in $|\OO(d_1,d_2)|$ has twistor degree $d_1+d_2$.

\begin{proposition}
\label{prop:surface-trace}
Let $f:\widetilde Z=\Bl_\ell(Z)\to Z$ be the blow-up of a twistor
space along a twistor line $\ell$, with exceptional divisor
$Q=f^{-1}(\ell)\simeq\PP^1\times\PP^1$. Use the classes $b,w,z,\xi$
on $Q$ from \eqref{eq:bi-def}--\eqref{eq:xi-zw}.
Let $S\subset Z$ be an irreducible surface of twistor degree
$d:=\deg_{\mathrm{tw}}(S)$, and let $\widetilde S$ be its strict
transform. The blow-up does not change the twistor degree:
$\deg_{\mathrm{tw}}(\widetilde S)=d$. The trace $[\widetilde S\cap Q]$
in $\CH^1(Q)$ is determined as follows.

\begin{enumerate}
\item\label{item:surface-a}
If $\ell\not\subset S$, then
$
[\widetilde S\cap Q]=d\,b=d(z+w).
$

\item\label{item:surface-b}
If $\ell\subset S$ and $S$ is smooth along $\ell$, then
$\widetilde S\cap Q$ is a section of $g:Q\to\ell$ and
\begin{equation*}\label{eq:trace-with-ell}
[\widetilde S\cap Q]=(d-1)b+w,
\end{equation*}
which has degree $1$ over $\ell$:
$\bigl((d-1)b+w\bigr)\cdot b=1$.

\item\label{item:surface-c}
An irreducible reduced curve $c\subset Q$ of class $(d-1)b+w$ is
rational and meets a generic twistor line in exactly $d$ points.
\end{enumerate}
\end{proposition}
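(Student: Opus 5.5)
The plan is to compute everything through the pull-back formula for strict transforms,
\[
[\widetilde S]=f^*[S]-m\,[Q]\quad\text{in }\CH^1(\widetilde Z),
\]
where $m=\operatorname{mult}_\ell(S)$. We then restrict to $Q$ with $j^*$. First, invariance of the twistor degree: a generic twistor line $L$ is disjoint from $\ell$, so $f$ is an isomorphism near $\widetilde L:=f^{-1}(L)$. Hence $\widetilde L\cdot[Q]=0$ and $\widetilde L\cdot\widetilde S=L\cdot S=d$. Next, $j^*f^*[S]=g^*\bigl(c_1(E)|_\ell\bigr)$, and $\ell$ is a twistor line, so $\deg c_1(E)|_\ell=d$. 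This gives $j^*f^*[S]=d\,b$.

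For \eqref{item:surface-a} we have $m=0$, so $[\widetilde S\cap Q]=j^*[\widetilde S]=d\,b$. Geometrically, $S\cap\ell$ is $d$ points counted with multiplicity, and the trace is the corresponding union of fibres of $g$.

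For \eqref{item:surface-b}, smoothness of $S$ along $\ell$ gives $m=1$. The trace $\widetilde S\cap Q$ is then the projectivised normal direction $\PP(\mathcal N_{\ell/S})\subset\PP(\mathcal N_{\ell/Z})$. This is a section of $g$, since $\mathcal N_{\ell/S}$ is a line subbundle. For the class, I would compute $j^*[Q]=c_1(\mathcal N_{Q/\widetilde Z})$ directly, by intersecting with two test curves rather than relying on a sign convention for $\xi$:
\begin{itemize}
\item on a fibre of $g$ (curve class $b$), the normal bundle has degree $-1$;
\item on a constant section of $Q=\PP(\OO(1)\oplus\OO(1))\to\ell$ (curve class $w$, with $w^2=0$), it restricts to the dual of the tautological subbundle $\OO_\ell(1)$, so it has degree $+1$.
\end{itemize}
Writing $j^*[Q]=\alpha b+\beta w$ and using $b\cdot w=1$, $b^2=w^2=0$, this forces $\beta=-1$ and $\alpha=1$, so $j^*[Q]=b-w=z$. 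Therefore
\[
[\widetilde S\cap Q]=d\,b-z=(d-1)b+w,
\]
and $\bigl((d-1)b+w\bigr)\cdot b=1$ confirms that it is a section. The main obstacle is this normal-bundle sign, which has to be made consistent with the convention behind \eqref{eq:Q-self-restriction}. I would settle it by the test-curve computation above, with the section-of-degree-one check as an independent confirmation.

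For \eqref{item:surface-c}, a reduced irreducible curve of class $(d-1)b+w$ has bidegree $(1,d-1)$ on $\PP^1\times\PP^1$. By adjunction its arithmetic genus is
\[
p_a=(1-1)(d-1-1)=0,
\]
so it is a smooth rational curve; equivalently, $c\cdot b=1$ makes $g|_c$ an isomorphism onto $\ell$. For the count of $d$ points, I would read "generic twistor line" in the limiting sense along $Q$: twistor lines specialising into the double locus have class $\xi=b+w$ on $Q$. Then $c\cdot\xi=1+(d-1)=d$. Since $\xi$ is very ample on $Q$, a generic member meets $c$ transversally, giving exactly $d$ points. An alternative check is that this matches the $d$ intersection points of $\widetilde L$ with $\widetilde S$ as $L$ degenerates to $\ell$, using the invariance of the twistor degree proved above.
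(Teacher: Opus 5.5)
Your proof is correct, but for parts~\ref{item:surface-a} and~\ref{item:surface-b} it follows a different route from the paper's. The paper's proof never uses the class of $Q$. In the case $\ell\not\subset S$ it counts fibres of $g$ over the length-$d$ scheme $S\cap\ell$. In the case $\ell\subset S$ it identifies $\widetilde S\cap Q$ with the section given by the quotient $\mathcal N_{\ell/Z}\twoheadrightarrow\mathcal N_{S/Z}|_\ell\simeq\OO_\ell(d)$; after twisting by $\OO_\ell(-1)$ this is the graph of a degree-$(d-1)$ map $\ell\to\PP^1$, so its class is $(d-1)b+w$.

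You instead restrict $[\widetilde S]=f^*[S]-m[Q]$ to $Q$, so everything rests on $j^*[Q]$. Your route is uniform and more general. It yields $[\widetilde S\cap Q]=(d-m)b+mw$ for any multiplicity $m$ of $S$ along $\ell$, and smoothness is needed only for the claim that the trace is a section. The paper's route does not depend on the normal bundle of $Q$ and exhibits the curve itself, so rationality and the section property are visible at once.

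Part~\ref{item:surface-c} is handled as in the paper, by adjunction and $c\cdot(b+w)=d$. Your Bertini remark upgrades the intersection number to $d$ distinct points. Your limiting check can also justify the class $b+w$. Let $\Gamma\subset Q$ be a limit of strict transforms $\widetilde L$ as $L\to\ell$. Then $\Gamma\cdot b=1$ and $\Gamma\cdot(b-w)=\widetilde L\cdot Q=0$, which forces $[\Gamma]=b+w$.

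On the sign you flagged, your value $j^*[Q]=b-w=z$ is the correct one. The normal bundle $\mathcal N_{Q/\widetilde Z}$ is the tautological line subbundle of $g^*\mathcal N_{\ell/Z}$ itself, not its dual, and on a constant section it is $\OO_\ell(1)$. So your degree $+1$ is right, even though you describe the bundle as a dual. Two independent checks confirm the sign:
\begin{itemize}
\item $Q^3=(b-w)^2=-2=-\deg\mathcal N_{\ell/Z}$;
\item $c_1(\mathcal N_{Q/\widetilde Z_1})+\sigma^*c_1(\mathcal N_{Q/\widetilde Z_2})=(b_1-w_1)+(w_1-b_1)=0$, which is the d-semistability condition forced by the local model $t=uv$.
\end{itemize}

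Your value \emph{cannot} be made consistent with \eqref{eq:Q-self-restriction}, which asserts $j^*[Q]=-\xi=-(b+w)$. With that value your formula would give $(d+1)b+w$, contradicting part~\ref{item:surface-b}. So rely on your test-curve computation rather than on that equation. The paper's proof of this proposition is unaffected only because it never uses $j^*[Q]$.
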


\begin{proof}
For a generic twistor line $L\subset Z$ disjoint from $\ell$, the
map $f$ is an isomorphism near $L$, so
$\deg_{\mathrm{tw}}(\widetilde S)=\deg([S]\cdot[L])=d$.

\smallskip
\noindent\emph{Proof of \ref{item:surface-a}.}
Since $\ell\not\subset S$, the scheme-theoretic intersection
$S\cap\ell$ is zero-dimensional of length $d$, counted with
multiplicity: this is precisely the degree of the line bundle $E$
restricted to the twistor line $\ell$. Each point of $S\cap\ell$,
with its scheme-theoretic multiplicity, contributes one fibre of
$g:Q\to\ell$, of class $b$. Summing over all points with
multiplicities gives $[\widetilde S\cap Q]=d\,b$.

\smallskip
\noindent\emph{Proof of \ref{item:surface-b}.}
Since $\ell\subset S$ and $S$ is smooth along $\ell$, the strict
transform $\widetilde S$ meets $Q=\PP(\mathcal N_{\ell/Z})\to\ell$
along the section induced by the surjection
$\mathcal N_{\ell/Z}\twoheadrightarrow\mathcal N_{S/Z}|_\ell$. Since
$S\in|E|$ and $S$ is smooth along $\ell$, one has
$\mathcal N_{S/Z}|_\ell\simeq E|_\ell\simeq\OO_\ell(d)$. Twisting
by $\OO_\ell(-1)$ and using
$\mathcal N_{\ell/Z}\simeq\OO_\ell(1)^{\oplus 2}$, one gets a
quotient $\OO_\ell^{\oplus 2}\twoheadrightarrow\OO_\ell(d-1)$. Under
$Q\simeq\ell\times\PP^1$, the corresponding section is the graph of a
map $\PP^1\to\PP^1$ of degree $d-1$, with divisor class $(d-1)b+w$.
Since $bw=[\mathrm{pt}]$, one computes $((d-1)b+w)\cdot b=1$.

 \smallskip
\noindent\emph{Proof of \ref{item:surface-c}.}
Let $D\subset Q\simeq\PP^1\times\PP^1$ be a divisor of bidegree
$(m,n)$, so that
\[
[D]=m\,b+n\,w \in \CH^1(Q).
\]
Let \(p_a(D)\) denote the arithmetic genus of \(D\). Since
\[
K_Q=-2b-2w,
\qquad
b^2=w^2=0,
\qquad
bw=[\mathrm{pt}],
\]
the adjunction formula gives
\[
p_a(D)=\frac{D\cdot(D+K_Q)}{2}+1.
\]
Now,
$
D^2=(m\,b+n\,w)^2=2mn,$
and
 $
D\cdot K_Q=(m\,b+n\,w)\cdot(-2b-2w)=-2m-2n.
$
Therefore
$$
p_a(D)=\frac{2mn-2m-2n}{2}+1=(m-1)(n-1).
$$
Applying this to
$
D=(d-1)b+w,
$
that is, $(m,n)=(d-1,1)$, we obtain
$
p_a(D)=0.
$
Since $c$ is irreducible and reduced, it follows that $c$ is rational.
Its intersection with a twistor line of class $\xi=b+w$ is
\[
((d-1)b+w)\cdot(b+w)=d.
\]

\end{proof}

\begin{theorem} 
\label{Thm2}
In the Donaldson--Friedman construction, let $S_i\subset Z_i$ be irreducible
reduced surfaces of twistor degree $d_i\ge 1$, and let
$\widetilde S_i\subset \widetilde Z_i$ be their strict transforms.
Assume that whenever $\ell_i\subset S_i$, the surface $S_i$ is smooth
along $\ell_i$.
By \Cref{prop:surface-trace}, the trace classes
\[
[\widetilde S_i\cap Q_i]\in \CH^1(Q_i)
\]
are well defined. Assume moreover that they satisfy the gluing condition
\begin{equation*}\label{eq:gluing-trace-condition}
\sigma_*\bigl([\widetilde S_1\cap Q_1]\bigr)
=
[\widetilde S_2\cap Q_2]
\qquad\text{in }\CH^1(Q_2).
\end{equation*}
Then exactly one of the following alternatives occurs:
\begin{enumerate}
\item\label{item:case-both-in}
both $\ell_1\subset S_1$ and $\ell_2\subset S_2$, and necessarily
$d_1=d_2=2$;

\item\label{item:case-one-in}
exactly one of the two inclusions $\ell_1\subset S_1$ and
$\ell_2\subset S_2$ holds, and necessarily $d_1=d_2=1$.
\end{enumerate}
In particular, the gluing condition cannot hold if neither
$\ell_1\subset S_1$ nor $\ell_2\subset S_2$, and it cannot hold if
$\max\{d_1,d_2\}\ge 3$.
Moreover, if $\ell_1\subset S_1$, then
\[
\sigma_*\bigl([\widetilde S_1\cap Q_1]\bigr)=(d_1-1)w_2+b_2,
\]
and this class has degree $d_1-1$ with respect to the ruling
$Q_2\to \ell_2$. The analogous statement for $\ell_2\subset S_2$
is obtained by symmetry.
\end{theorem}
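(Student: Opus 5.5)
The plan is a direct case analysis in $\CH^1(Q_2)$, using two facts: $\CH^1(Q_2)$ is free abelian of rank $2$ on the basis $b_2,w_2$, by \eqref{eq:quadric-ring-bw} of \Cref{lem:chow-quadric}, and $\sigma_*$ acts on this group by \eqref{eq:sigma-on-w}, namely $\sigma_*(b_1)=w_2$ and $\sigma_*(w_1)=b_2$. For each $i$, \Cref{prop:surface-trace} gives the trace class. It is $d_ib_i$ when $\ell_i\not\subset S_i$, by item~\ref{item:surface-a}. It is $(d_i-1)b_i+w_i$ when $\ell_i\subset S_i$, by item~\ref{item:surface-b}, which applies because of the smoothness hypothesis along $\ell_i$. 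I would push the trace on $Q_1$ forward by $\sigma$ and compare coefficients of $b_2$ and $w_2$ with the trace on $Q_2$.

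There are four cases according to whether $\ell_1\subset S_1$ and whether $\ell_2\subset S_2$.
\begin{itemize}
\item Neither line is contained in its surface. Then $\sigma_*(d_1b_1)=d_1w_2$ must equal $d_2b_2$. Freeness forces $d_1=d_2=0$, which contradicts $d_i\ge1$, so this case cannot occur.
\item Both lines are contained in their surfaces. Then $\sigma_*\bigl((d_1-1)b_1+w_1\bigr)=(d_1-1)w_2+b_2$ must equal $(d_2-1)b_2+w_2$. Comparing coefficients gives $d_2-1=1$ and $d_1-1=1$, so $d_1=d_2=2$.
\item Only $\ell_1\subset S_1$. Then $(d_1-1)w_2+b_2=d_2b_2$, which gives $d_2=1$ and $d_1-1=0$, so $d_1=d_2=1$.
\item Only $\ell_2\subset S_2$. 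Then $d_1w_2=(d_2-1)b_2+w_2$, which gives $d_1=1$ and $d_2=1$.
\end{itemize}
These cases are mutually exclusive by construction, so exactly one of the alternatives \ref{item:case-both-in} and \ref{item:case-one-in} occurs. Both alternatives have $\max\{d_1,d_2\}\le2$, which proves the claim about $\max\{d_1,d_2\}\ge3$.

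For the final assertion, the computation above already gives $\sigma_*[\widetilde S_1\cap Q_1]=(d_1-1)w_2+b_2$ whenever $\ell_1\subset S_1$. Its degree with respect to the ruling $g_2\colon Q_2\to\ell_2$ is its intersection with a fibre class $b_2$. Using $b_2^2=0$ and $b_2w_2=[\mathrm{pt}]$, this equals $\bigl((d_1-1)w_2+b_2\bigr)\cdot b_2=d_1-1$. The statement with the roles of $Q_1$ and $Q_2$ exchanged follows in the same way from $\sigma^{-1}$, which also swaps the rulings.

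The argument contains no real obstacle beyond bookkeeping. The one point to state explicitly is that the comparison takes place in the free group $\Z b_2\oplus\Z w_2$, so that the coefficients can be compared; this is guaranteed by \Cref{lem:chow-quadric}. One should also note that $\sigma_*$ sends the fibre class $b_1$ of $g_1$ to the class $w_2$, which is not the fibre class of $g_2$. This swap is exactly what produces the rigidity of the degrees.
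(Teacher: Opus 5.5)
Your proposal is correct and follows the paper's proof essentially step for step. You take the two trace classes from \Cref{prop:surface-trace}, transport the one on $Q_1$ using $\sigma_*(b_1)=w_2$ and $\sigma_*(w_1)=b_2$, compare coefficients in $\Z b_2\oplus\Z w_2$ across the same four cases, and obtain the ruling degree by intersecting with $b_2$, exactly as the paper does.
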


\begin{proof}
By \Cref{prop:surface-trace}, for each $i=1,2$ one has
\[
[\widetilde S_i\cap Q_i]
=
\begin{cases}
d_i\,b_i, & \text{if }\ell_i\not\subset S_i,\\[4pt]
(d_i-1)b_i+w_i, & \text{if }\ell_i\subset S_i.
\end{cases}
\]
We compare these classes using
\(\sigma_*(b_1)=w_2\), and
\(\sigma_*(w_1)=b_2\).
Since $Q_2\simeq \PP^1\times\PP^1$, one has
\[
\CH^1(Q_2)\simeq \Z\,b_2\oplus \Z\,w_2,
\]
so $b_2$ and $w_2$ are linearly independent.
Assume first that both $\ell_1\subset S_1$ and $\ell_2\subset S_2$.
Then
\[
\sigma_*\bigl((d_1-1)b_1+w_1\bigr)=(d_1-1)w_2+b_2.
\]
Imposing equality with $(d_2-1)b_2+w_2$ gives
\[
(d_1-1)w_2+b_2=(d_2-1)b_2+w_2.
\]
Since $b_2$ and $w_2$ are independent in $\CH^1(Q_2)$, we obtain
$
d_1-1=1, 
$ and $
d_2-1=1,
$
hence $d_1=d_2=2$. This proves
\ref{item:case-both-in}.
Assume next that $\ell_1\subset S_1$ and $\ell_2\not\subset S_2$.
Then
\[
\sigma_*\bigl((d_1-1)b_1+w_1\bigr)=(d_1-1)w_2+b_2,
\]
and the gluing condition becomes
$
(d_1-1)w_2+b_2=d_2b_2.
$
Again by independence of $b_2$ and $w_2$, this yields
\[
d_1-1=0,
\qquad
d_2=1,
\]
so $d_1=d_2=1$.
Similarly, if $\ell_1\not\subset S_1$ and $\ell_2\subset S_2$, then
$
\sigma_*(d_1b_1)=d_1w_2,
$
and the gluing condition becomes
$
d_1w_2=(d_2-1)b_2+w_2.
$
Therefore
\[
d_1=1,
\qquad
d_2-1=0,
\]
hence again $d_1=d_2=1$. This proves
\ref{item:case-one-in}.

Finally, if neither $\ell_1\subset S_1$ nor $\ell_2\subset S_2$, then
$
\sigma_*(d_1b_1)=d_1w_2,
$
and the gluing condition would force
$
d_1w_2=d_2b_2$ in $\CH^1(Q_2).
$
Since $b_2$ and $w_2$ are independent, this implies
$d_1=d_2=0$, contradicting the assumption $d_i\ge 1$.
Hence this case is impossible.
This proves the classification of all possible cases.
For the final claim, if $\ell_1\subset S_1$, then
\[
\sigma_*\bigl([\widetilde S_1\cap Q_1]\bigr)
=
\sigma_*\bigl((d_1-1)b_1+w_1\bigr)
=
(d_1-1)w_2+b_2.
\]
Its degree with respect to the ruling $Q_2\to \ell_2$ is
\[
\bigl((d_1-1)w_2+b_2\bigr)\cdot b_2
=
(d_1-1)\,w_2\cdot b_2+b_2^2
=
d_1-1.
\]
The statement for $\ell_2\subset S_2$ is symmetric.
\end{proof}

\subsection{Fulton's specialisation map for the semistable smoothing}
\label{subsec:chow-specialisation}
%======================================================================

This subsection relates the intersection theory of the singular central fibre
\[
Z_0=\widetilde Z=\widetilde Z_1\cup_Q \widetilde Z_2
\]
to the geometry of the nearby smooth fibres.
The specialisation map is defined on the total space of the family.
Since the special fibre of the local equation $t=uv$ is the sum of the
two Cartier divisors $u=0$ and $v=0$, the specialised cycle decomposes
accordingly.

Let $\Delta\subset \C$ be a small disc with coordinate $t$, and let
$
\pi:\mathcal Z\longrightarrow \Delta
$
be a flat projective morphism with smooth total space and central fibre
\[
Z_0:=\pi^{-1}(0)=\widetilde Z=\widetilde Z_1\cup_Q \widetilde Z_2,
\]
where  $\widetilde Z_1,\widetilde Z_2$ are smooth, $Q=\widetilde Z_1\cap \widetilde Z_2$
is smooth, and analytically locally along $Q$ there exist coordinates
$(u,v,z_3,\dots,z_n)$ on $\mathcal Z$ such that
\begin{equation*}\label{eq:semistable-model}
t=uv,\qquad
\widetilde Z_1=\{u=0\},\qquad
\widetilde Z_2=\{v=0\},\qquad
Q=\{u=v=0\}.
\end{equation*}

Passing to the formal neighbourhood of the origin, set
\begin{equation*}\label{eq:RK-def}
R:=\C[[t]],\qquad K:=\C((t)),\qquad
\mathcal Z_R:=\mathcal Z\times_\Delta \Spec R.
\end{equation*}
Then $\mathcal Z_R$ is flat and projective over $\Spec R$, its generic
fibre is
\[
(\mathcal Z_R)_K:=\mathcal Z_R\times_{\Spec R}\Spec K,
\]
and its special fibre is canonically identified with $Z_0$.

Throughout this subsection, $\CH_r(X)$ denotes the Chow group of $r$-dimensional cycles modulo rational equivalence.

\begin{proposition}\label{prop:specialisation-chow}
For every $r\in \Z$ there is a canonical specialisation homomorphism
\begin{equation*}\label{eq:sp-def}
\operatorname{sp}:\CH_r\bigl((\mathcal Z_R)_K\bigr)
\longrightarrow \CH_r(Z_0).
\end{equation*}
If $V_K\subset (\mathcal Z_R)_K$ is an integral closed subscheme of
dimension $r$ and $\overline V\subset \mathcal Z_R$ is its
scheme-theoretic closure, then
\begin{equation*}\label{eq:sp-formula}
\operatorname{sp}\bigl([V_K]\bigr)=i_0^!\bigl([\overline V]\bigr),
\end{equation*}
where
$
i_0:Z_0\hookrightarrow \mathcal Z_R
$
is the inclusion of the special fibre, viewed as an effective Cartier
divisor cut out by $t$.
If $\overline V$ meets $Z_0$ properly, then
\begin{equation*}\label{eq:sp-formula-proper}
\operatorname{sp}\bigl([V_K]\bigr)=\bigl[\overline V\cap Z_0\bigr].
\end{equation*}
\end{proposition}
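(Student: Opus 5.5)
The plan is to build $\operatorname{sp}$ with Fulton's construction in \cite[\S10.1 and Example~20.3.1]{Fulton}, which gives the specialisation map for any scheme flat over a discrete valuation ring. The central fibre $Z_0$ is the effective Cartier divisor $\{t=0\}$ in $\mathcal Z_R$, and its normal bundle is trivial because $t$ is a global equation. So we have the refined Gysin homomorphism
\[
i_0^!:\CH_{r+1}(\mathcal Z_R)\longrightarrow \CH_r(Z_0).
\]
Let $j:(\mathcal Z_R)_K\hookrightarrow\mathcal Z_R$ be the open complement of $Z_0$. The localisation sequence \cite[Prop.~1.8]{Fulton} gives
\[
\CH_{r+1}(Z_0)\xrightarrow{(i_0)_*}\CH_{r+1}(\mathcal Z_R)\xrightarrow{j^*}\CH_r\bigl((\mathcal Z_R)_K\bigr)\to 0 .
\]
Dimensions are relative to $\Spec R$, so an $r$-dimensional cycle on the generic fibre closes to a cycle of dimension $r+1$ on $\mathcal Z_R$. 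I would define $\operatorname{sp}(\alpha):=i_0^!\tilde\alpha$ for any lift $\tilde\alpha$ of $\alpha$.

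The key step is to show this does not depend on the lift. A different lift changes $\tilde\alpha$ by some $(i_0)_*\beta$ with $\beta\in\CH_{r+1}(Z_0)$. By \cite[Prop.~2.6]{Fulton},
\[
i_0^!(i_0)_*\beta=c_1\bigl(\OO_{\mathcal Z_R}(Z_0)|_{Z_0}\bigr)\cap\beta .
\]
This vanishes because $\OO(Z_0)=\OO\cdot t^{-1}$ is trivial. So $\operatorname{sp}$ is a well-defined homomorphism. It is canonical because the construction involves no choices beyond the uniformiser $t$, and the triviality argument shows that changing $t$ by a unit changes nothing.

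For the formula, choose the lift $[\overline V]$. This is legitimate: $\overline V$ is integral of dimension $r+1$, and $j^*[\overline V]=[V_K]$ because the closure restricts to $V_K$ on the generic fibre. This gives $\operatorname{sp}([V_K])=i_0^![\overline V]$.

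For the proper-intersection case, I would observe that $\overline V$ is flat over $R$: it is integral and dominates $\Spec R$, so $t$ is a nonzerodivisor on $\OO_{\overline V}$. Hence $\overline V\cap Z_0$ is the Cartier divisor $t|_{\overline V}$ on $\overline V$. By \cite[Example~6.2.1 / \S2.3]{Fulton}, $i_0^![\overline V]$ is then the Weil divisor class $[\overline V\cap Z_0]$, with multiplicities given by the lengths of the local rings along the components.

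The main obstacle is a technical one: intersection theory on $\mathcal Z_R$, which is of finite type over $R$ rather than over a field. I would handle it in one of two ways. The first is Fulton's Chapter~20 framework for schemes over a regular base, in which all the above tools hold. The second is to work on the analytic/algebraic family $\mathcal Z\to\Delta$ near $0$ and then pass to the formal setting.
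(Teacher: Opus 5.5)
Your proposal is correct and takes the same route as the paper. The paper's proof simply cites Fulton's specialisation map for a family over a discrete valuation ring, together with the fact that $i_0^!$ agrees with the scheme-theoretic intersection in the proper case; you unpack that citation (lifting through the localisation sequence, independence of the lift because $c_1(\OO(Z_0)|_{Z_0})=0$, and the Cartier-divisor computation on $\overline V$), and your flatness argument also shows that the properness hypothesis is automatic, since the integral scheme $\overline V$ is never contained in $Z_0$.
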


\begin{proof}
This is Fulton's specialisation map for a family over a
discrete valuation ring, applied to the Cartier divisor
$Z_0\subset \mathcal Z_R$. When $\overline V$ meets $Z_0$ properly,
the refined Gysin pull-back agrees with the class of the
scheme-theoretic intersection.
\end{proof}
Refined Gysin pull-backs are taken in Fulton's sense. For a regular closed
immersion $f:Y\hookrightarrow X$ of codimension $c$,
$f^!: \CH_r(X)\to\CH_{r-c}(Y)$ agrees with scheme-theoretic
intersection when the intersection is proper and remains defined in
the presence of excess intersection. In particular, $i_0^!$ gives
specialisation to the central fibre and $i_i^!,j_i^!$ give the
component and double-locus traces; see
\cite[\S6.2, \S8.1, \S20.3]{Fulton}.
\begin{definition}\label{rem:proper-position}
An integral cycle $V_K\subset(\mathcal Z_R)_K$ is in \emph{strong
proper position} if its closure $\overline V\subset\mathcal Z_R$ has
no irreducible component contained in $\widetilde Z_1$,
$\widetilde Z_2$, or $Q$, and no irreducible component of
$\overline V\cap\widetilde Z_i$ is contained in $Q$. Equivalently,
$\overline V$ meets $\widetilde Z_1$, $\widetilde Z_2$, and $Q$
properly. In this case the relevant refined pull-backs are represented
by the corresponding scheme-theoretic intersections.
\end{definition}
For $i=1,2$, write
$
i_i:\widetilde Z_i\hookrightarrow \mathcal Z_R $
for the closed immersions of the two components of the special fibre,
and
$
j_i:Q\hookrightarrow \widetilde Z_i
$
for the inclusions of the double locus.

\begin{proposition} 
\label{prop:sp-decomp}
Let $V_K\subset(\mathcal Z_R)_K$ be an integral closed subscheme of
dimension $r$, with closure $\overline V\subset\mathcal Z_R$. Set
\begin{equation*}\label{eq:alpha-i-def}
\alpha_i:=i_i^!\bigl([\overline V]\bigr)\in\CH_r(\widetilde Z_i),
\qquad i=1,2.
\end{equation*}
Then:
\begin{enumerate}
\item%\label{item:sp-decomp-sum}
\begin{equation}\label{eq:sp-sum-general}
\operatorname{sp}\bigl([V_K]\bigr)
=\phi_{1*}\alpha_1+\phi_{2*}\alpha_2
\qquad\text{in }\CH_r(Z_0).
\end{equation}

\item%\label{item:sp-decomp-match}
The refined restrictions of $\alpha_1$ and $\alpha_2$ to the double
locus coincide:
\begin{equation}\label{eq:sp-matching-general}
j_1^!\alpha_1=j_2^!\alpha_2
\qquad\text{in }\CH_{r-1}(Q),
\end{equation}
equivalently
\begin{equation}\label{eq:sp-matching-sigma-general}
j_1^!\alpha_1=\sigma^*\bigl(j_2^!\alpha_2\bigr)
\qquad\text{on }Q_1.
\end{equation}
\end{enumerate}
\end{proposition}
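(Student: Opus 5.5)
Both parts come from one decomposition of Cartier divisors on the smooth total space $\mathcal Z_R$. Locally along $Q$ we have $t=uv$, so the principal Cartier divisor $Z_0=\operatorname{div}(t)$ splits as $Z_0=D_1+D_2$, where $D_1=\widetilde Z_1=\{u=0\}$ and $D_2=\widetilde Z_2=\{v=0\}$ are effective Cartier divisors. Since $\mathcal Z_R$ is smooth, each $\widetilde Z_i$ is Cartier globally. I will use additivity of refined Gysin maps for sums of effective Cartier divisors \cite[Cor.~6.3, \S2.4]{Fulton}: for $D=D_1+D_2$ and any cycle class $\beta$ on $\mathcal Z_R$,
\[
i_D^!\beta=\iota_{1*}\,i_{D_1}^!\beta+\iota_{2*}\,i_{D_2}^!\beta\quad\text{in }\CH_*(D),
\]
where $\iota_k:D_k\hookrightarrow D$ are the inclusions. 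Applied to $\beta=[\overline V]$, with $\iota_k=\phi_k$, this gives $i_0^!([\overline V])=\phi_{1*}\alpha_1+\phi_{2*}\alpha_2$. By \Cref{prop:specialisation-chow} the left side is $\operatorname{sp}([V_K])$, which proves \eqref{eq:sp-sum-general}. I will check the additivity formula at the level of Fulton's definition, $i_D^!\beta=D\cdot\beta$ via the pseudo-divisor intersection. For $\overline V\not\subset D_k$ this is the Weil divisor of the restricted Cartier divisor, and Weil divisors of sums are sums. Since $\overline V$ is the closure of a generic-fibre cycle, it is flat over $R$ and not contained in $Z_0$, so no excess case occurs.

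For (2), I will compute the double refined pullback $[\overline V]\mapsto j_1^!\,i_1^!$ and identify it with the intersection of $[\overline V]$ by $D_1$ and then by $D_2$. The key point is that $Q=D_1\cap D_2$ scheme-theoretically, and the Cartier divisor $D_2|_{D_1}$ is exactly $Q\subset\widetilde Z_1$, which is the reduced smooth locus $\{u=v=0\}$. Hence
\[
j_1^!\,i_1^!\,[\overline V]=D_2\cdot\bigl(D_1\cdot[\overline V]\bigr)\quad\text{in }\CH_{r-1}(Q).
\]
Symmetrically, $j_2^!\alpha_2=D_1\cdot(D_2\cdot[\overline V])$. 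Commutativity of intersection with pseudo-divisors \cite[Cor.~2.4.2, Thm.~6.4]{Fulton} then gives $j_1^!\alpha_1=j_2^!\alpha_2$ in $\CH_{r-1}(Q)$. The $\sigma$-form \eqref{eq:sp-matching-sigma-general} is only a relabelling: $Q$ is identified with $Q_1$ via $j_1$ and with $Q_2$ via $\sigma$, so the class on $Q_2$ pulls back by $\sigma$.

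The main obstacle is the compatibility $j_1^!\circ i_1^!=(D_2\cdot)\circ(D_1\cdot)$, that is, the functoriality of refined Gysin maps for the composite $Q\hookrightarrow\widetilde Z_1\hookrightarrow\mathcal Z_R$. Here $j_1^!$ is the Gysin map for $Q$ as a Cartier divisor in $\widetilde Z_1$, and its line bundle is $\OO_{\mathcal Z}(D_2)|_{\widetilde Z_1}$. I will verify this equality of line bundles together with their sections, and then invoke \cite[Thm.~6.5]{Fulton} on functoriality. Alternatively, I will observe that both sides are defined through restriction of the same pseudo-divisor $D_2$ to $D_1$. Once this is checked, the commutativity theorem finishes the argument without any proper-position hypothesis.
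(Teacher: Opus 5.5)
Your proof is correct. Part (1) is exactly the paper's argument: the splitting $Z_0=\widetilde Z_1+\widetilde Z_2$ of effective Cartier divisors, plus additivity of intersection with pseudo-divisors in the divisor variable.

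Part (2) takes a different route.

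\emph{The paper's route.} It notes that $k\colon Q\hookrightarrow\mathcal Z_R$ factors as $i_1\circ j_1=i_2\circ j_2$ through regular embeddings. It then applies functoriality of refined Gysin maps (Fulton, Thm.~6.5) to both factorisations, giving $j_1^!i_1^!=k^!=j_2^!i_2^!$. Symmetry is automatic because both sides equal the single map $k^!$, and nothing beyond the factorisation has to be checked.

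\emph{Your route.} You identify $j_1^!\alpha_1$ with $D_2\cdot(D_1\cdot[\overline V])$. This uses two facts: the pull-back of the pseudo-divisor $D_2$ to $\widetilde Z_1$ is the effective Cartier divisor $Q$ (its section restricts to $v|_{\{u=0\}}$, a non-zero-divisor cutting out $Q$), and the projection formula for pseudo-divisors (Fulton, Prop.~2.3). You then conclude by commutativity of intersections with pseudo-divisors (Fulton, Cor.~2.4.2). This stays entirely within the Chapter~2 divisor calculus and makes explicit where $Q=\widetilde Z_1\cap\widetilde Z_2$ (scheme-theoretically) enters. The cost is the line-bundle-and-section check.

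One clarification about your last paragraph: the two tools you mention are alternatives, not complements. If you invoke Thm.~6.5, you already have $j_1^!i_1^!=k^!=j_2^!i_2^!$, and commutativity is superfluous; that is precisely the paper's proof. On the pseudo-divisor route, the identification $j_1^*=(D_2|_{\widetilde Z_1})\cdot{}$ together with the projection formula suffices, and Thm.~6.5 is not needed at all.
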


\begin{proof}
Since $Z_0=\operatorname{div}(t)$ and locally $t=uv$, one has an
equality of effective Cartier divisors
\begin{equation*}\label{eq:cartier-split}
Z_0=\widetilde Z_1+\widetilde Z_2
\qquad\text{in }\mathcal Z_R.
\end{equation*}
By additivity of refined Gysin pull-back for Cartier divisors
\cite[\S2.3]{Fulton},
\[
i_0^!\bigl([\overline V]\bigr)
=\phi_{1*}i_1^!\bigl([\overline V]\bigr)
+\phi_{2*}i_2^!\bigl([\overline V]\bigr),
\]
which is \eqref{eq:sp-sum-general}.
Now, 
for \eqref{eq:sp-matching-general}, let $k:Q\hookrightarrow\mathcal Z_R$
be the inclusion. Since the total space $\mathcal Z_R$ is regular and
$Q=\widetilde Z_1\cap\widetilde Z_2$ is the scheme-theoretic
intersection of two Cartier divisors on $\mathcal Z_R$, the inclusion
$k$ is a regular immersion of codimension $2$ globally on
$\mathcal Z_R$ (not only in the local semistable chart). Moreover,
$k=i_1\circ j_1=i_2\circ j_2$ as maps into $\mathcal Z_R$.
By functoriality of refined Gysin homomorphisms for compositions of
regular immersions \cite[\S6.2]{Fulton},
\[
k^!=j_1^!\circ i_1^!=j_2^!\circ i_2^!.
\]
Applying this to $[\overline V]$ gives $j_1^!\alpha_1=j_2^!\alpha_2$.
The reformulation \eqref{eq:sp-matching-sigma-general} follows by
transport through $\sigma$.
\end{proof}

\begin{corollary}
\label{cor:sp-by-components}
Under the hypotheses of \Cref{prop:sp-decomp}, assume
moreover that $V_K$ is in strong proper position
(\Cref{rem:proper-position}). Set
\begin{equation*}\label{eq:Vi-VQ-def}
V_i:=\overline V\cap\widetilde Z_i\subset\widetilde Z_i,
\qquad
V_Q:=\overline V\cap Q\subset Q.
\end{equation*}
Then $\alpha_i=[V_i]$, so
\begin{equation}\label{eq:sp-sum}
\operatorname{sp}\bigl([V_K]\bigr)=\phi_{1*}[V_1]+\phi_{2*}[V_2],
\end{equation}
and
\begin{equation}\label{eq:sp-matching}
j_1^![V_1]=[V_Q]=j_2^![V_2]
\qquad\text{in }\CH_{r-1}(Q),
\end{equation}
equivalently $j_1^![V_1]=\sigma^*(j_2^![V_2])$ on $Q_1$.
\end{corollary}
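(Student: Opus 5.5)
The corollary is a specialisation of \Cref{prop:sp-decomp}. My plan is to show that, under strong proper position, each refined Gysin pull-back appearing there is represented by the class of the scheme-theoretic intersection. The three identities then follow by substitution.

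First I will identify $\alpha_i$. Each $\widetilde Z_i\subset\mathcal Z_R$ is an effective Cartier divisor, since the total space is regular and locally $\widetilde Z_1=\{u=0\}$, $\widetilde Z_2=\{v=0\}$. By \Cref{rem:proper-position}, no component of $\overline V$ lies in $\widetilde Z_i$, so the divisor meets $\overline V$ properly. Fulton's description of intersecting with a Cartier divisor \cite[\S2.3]{Fulton} then says that $i_i^![\overline V]$ is the Weil divisor of the restricted Cartier divisor on $\overline V$. This equals the fundamental cycle $[\overline V\cap\widetilde Z_i]=[V_i]$, with multiplicities given by the lengths at the generic points of the components. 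Hence $\alpha_i=[V_i]$, and substituting into \eqref{eq:sp-sum-general} gives \eqref{eq:sp-sum}.

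Next I will treat the double-locus trace. The inclusion $j_i:Q\hookrightarrow\widetilde Z_i$ is a Cartier divisor in a smooth variety. By the second clause of strong proper position, no component of $V_i$ is contained in $Q$. Applying the same divisor argument to each component of $V_i$, and using linearity of $j_i^!$, I get
\[
j_i^![V_i]=[V_i\cap Q].
\]
The scheme $V_i\cap Q$ is $\overline V\cap\widetilde Z_i\cap Q=\overline V\cap Q=V_Q$, because $Q\subset\widetilde Z_i$ scheme-theoretically. This gives \eqref{eq:sp-matching}. As a consistency check, the equality $j_1^![V_1]=j_2^![V_2]$ is also what \eqref{eq:sp-matching-general} predicts, and the $\sigma$-reformulation follows by transport exactly as in \Cref{prop:sp-decomp}.

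The main obstacle is the multiplicity bookkeeping. The cycle $[V_i]$ may be non-reduced, so I must interpret it as the fundamental cycle of the scheme $\overline V\cap\widetilde Z_i$. I then need to check that $[V_i\cap Q]$, computed from that cycle, agrees with the fundamental cycle of the scheme $V_Q$. Equivalently, I need the statement that $j_i^!$ of the cycle $[V_i]$ equals $j_i^!$ applied to the scheme $V_i$. This holds because intersecting with a Cartier divisor $D$ not containing any component of a scheme $W$ satisfies
\[
[W\cap D]=D\cdot[W],
\]
for instance via \cite[Example~7.1.10]{Fulton}, or via the associativity $k^!=j_i^!\circ i_i^!$ already used in \Cref{prop:sp-decomp}. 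I would cite this fact rather than reprove it.
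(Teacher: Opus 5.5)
Your route is the paper's: proper intersection with the Cartier divisors $\widetilde Z_i\subset\mathcal Z_R$ gives $\alpha_i=[V_i]$, and proper intersection of $V_i$ with $Q\subset\widetilde Z_i$ is then used to identify $j_i^![V_i]$ with $[V_Q]$. The first half is correct, because $\overline V$ is integral and the Weil divisor of $\widetilde Z_i|_{\overline V}$ is the fundamental cycle of the scheme $\overline V\cap\widetilde Z_i$.

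The gap sits exactly at the obstacle you flagged. The fact you propose to cite, $[W\cap D]=D\cdot[W]$ whenever $D$ contains no irreducible component of $W$, is false as stated. It needs $D$ to contain no \emph{associated} point of $W$, so that $D|_W$ is an effective Cartier divisor on $W$. Otherwise $D\cdot[W]=\sum_k m_k[W_k\cap D]$ can be strictly smaller than the fundamental cycle of $W\cap D$.

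Here $V_i$ is a Cartier divisor on $\overline V$, so it acquires embedded points wherever $\overline V$ fails to be Cohen--Macaulay. Strong proper position says nothing about embedded points lying on $Q$. Your alternative via $k^!=j_i^!\circ i_i^!$ only moves the problem. At the generic point $\eta$ of a component of $\overline V\cap Q$, the coefficient of $k^![\overline V]$ is the Samuel multiplicity $e\bigl((u,v);\OO_{\overline V,\eta}\bigr)$. The coefficient of $[V_Q]$ is $\operatorname{length}\bigl(\OO_{\overline V,\eta}/(u,v)\bigr)$. These agree if and only if $\OO_{\overline V,\eta}$ is Cohen--Macaulay.

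Concretely, take $r=1$ and suppose that near $p\in Q$, in a semistable chart, $\overline V$ is the non-Cohen--Macaulay cone $\Spec\C[a^4,a^3b,ab^3,b^4]$, embedded by $(u,v,z_3,z_4)=(a^4,b^4,a^3b,ab^3)$. Then:
\begin{itemize}
\item strong proper position holds;
\item $V_1$ is the line $\{a=0\}$ with multiplicity $4$, plus an embedded point at $p$;
\item $j_1^![V_1]=4[p]$, while $[V_Q]=5[p]$.
\end{itemize}

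So your argument, like the paper's (which takes the same step in one line), needs an extra input. That input is available in the case the paper actually uses later, $r=2$:
\begin{itemize}
\item $\overline V$ is a prime divisor in the regular $4$-dimensional scheme $\mathcal Z_R$, hence locally principal and Cohen--Macaulay.
\item So $V_i$ is Cohen--Macaulay, and its associated points are the generic points of its components, none of which lies in $Q$.
\item Hence $Q\cap V_i$ is an effective Cartier divisor on $V_i$, and $[V_i\cap Q]=\sum_k m_k[W_k\cap Q]=j_i^![V_i]$.
\end{itemize}
The case $r=3$ is immediate because $\overline V$ is regular. The case $r=0$ is immediate because strong proper position forces $V_Q=\varnothing$.

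In general you should do one of two things. Either assume $\overline V$ is Cohen--Macaulay at the generic points of $\overline V\cap Q$, or read $[V_Q]$ as the intersection cycle $k^![\overline V]$ rather than the fundamental cycle of the scheme $\overline V\cap Q$. The equality $j_1^![V_1]=j_2^![V_2]$ itself holds unconditionally by \Cref{prop:sp-decomp}.
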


\begin{proof}
Since $V_K$ is in strong proper position, $\overline V$ meets each
$\widetilde Z_i$ properly, so $\alpha_i=i_i^!([\overline V])=[V_i]$
by the proper case of refined intersection. This gives
\eqref{eq:sp-sum}. By the second clause of
\Cref{rem:proper-position}, each $V_i$ also meets $Q$ properly,
so $j_i^![V_i]=[V_Q]$, which is \eqref{eq:sp-matching}.
\end{proof}

Since $Z_0$ is singular, Chern classes on the central fibre are
understood operationally.

\begin{lemma}\label{lem:chern-commutes-sp}
Let $\mathcal E$ be a vector bundle on $\mathcal Z_R$. Then for every
$i\ge 0$ and every $\beta\in \CH_*\bigl((\mathcal Z_R)_K\bigr)$ one has
\begin{equation}\label{eq:chern-sp-commute}
\operatorname{sp}\Bigl(
c_i\bigl(\mathcal E|_{(\mathcal Z_R)_K}\bigr)\cap \beta
\Bigr)
=
c_i\bigl(\mathcal E|_{Z_0}\bigr)\cap \operatorname{sp}(\beta),
\end{equation}
where
$
c_i\bigl(\mathcal E|_{Z_0}\bigr)\in A^i(Z_0)
$
is the operational Chern class on the singular fibre.
\end{lemma}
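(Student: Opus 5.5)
It suffices to prove \eqref{eq:chern-sp-commute} for $\beta=[V_K]$ with $V_K\subset(\mathcal Z_R)_K$ integral, since both sides are additive in $\beta$. Let $\overline V\subset\mathcal Z_R$ be the closure of $V_K$. By \Cref{prop:specialisation-chow}, $\operatorname{sp}([V_K])=i_0^!([\overline V])$. More generally, Fulton's specialisation map is the composite
\[
\CH_r\bigl((\mathcal Z_R)_K\bigr)\xleftarrow{\ j^*\ }\CH_{r+1}(\mathcal Z_R)\xrightarrow{\ i_0^!\ }\CH_r(Z_0),
\]
where $j^*$ is flat restriction to the generic fibre (surjective). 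The map $i_0^!$ kills the kernel of $j^*$, because $i_0^!$ of a cycle supported on $Z_0$ is $c_1(\mathcal O(Z_0)|_{Z_0})\cap(-)$, and $\mathcal O(Z_0)=\mathcal O(\operatorname{div} t)$ is trivial \cite[\S20.3]{Fulton}.

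The plan is to lift the Chern class operation to the total space $\mathcal Z_R$ and then commute it past each of the two arrows. Put $\gamma:=c_i(\mathcal E)\cap[\overline V]\in\CH_{r+1-i}(\mathcal Z_R)$.

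\emph{Generic fibre.} Chern classes commute with flat pull-back \cite[Prop.~3.2(d)]{Fulton}, so
\[
j^*\gamma=c_i\bigl(\mathcal E|_{(\mathcal Z_R)_K}\bigr)\cap[V_K].
\]
Hence $\gamma$ is a lift of the left-hand argument of \eqref{eq:chern-sp-commute}, and
\[
\operatorname{sp}\bigl(c_i(\mathcal E|_{(\mathcal Z_R)_K})\cap[V_K]\bigr)=i_0^!\gamma .
\]
This uses the well-definedness of $\operatorname{sp}$ on the quotient described above.

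\emph{Special fibre.} Chern classes of vector bundles commute with refined Gysin maps for regular embeddings \cite[Prop.~6.3]{Fulton}:
\[
i_0^!\bigl(c_i(\mathcal E)\cap[\overline V]\bigr)=c_i(i_0^*\mathcal E)\cap i_0^![\overline V].
\]
Here $i_0^*\mathcal E=\mathcal E|_{Z_0}$, and $c_i(\mathcal E|_{Z_0})$ is precisely the operational Chern class in $A^i(Z_0)$ \cite[Chapter~17]{Fulton}. Combining the two steps gives \eqref{eq:chern-sp-commute}.

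\emph{Main obstacle.} The substantive point is the independence of $\operatorname{sp}$ from the chosen lift, i.e.\ that $i_0^!$ vanishes on cycles supported on $Z_0$. This is exactly where the triviality of the normal bundle of the fibre (cut out by $t$) enters. Everything else reduces to the standard compatibilities cited above. I will also check that operational Chern classes of the restricted bundle on the singular fibre $Z_0$ agree with Fulton's Chern class operations, so that no smoothness of $Z_0$ is needed.
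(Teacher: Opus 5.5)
Your proposal is correct and follows the same route as the paper: you write $\operatorname{sp}$ as $i_0^!$ applied to a lift on $\mathcal Z_R$, and then commute $c_i(\mathcal E)$ past the refined Gysin map of the Cartier divisor $Z_0$. Your proposal also makes explicit a step that the paper's one-line argument leaves implicit. Namely, $c_i(\mathcal E)\cap[\overline V]$ restricts to $c_i(\mathcal E|_{(\mathcal Z_R)_K})\cap[V_K]$ by compatibility with flat pull-back, and $i_0^!$ does not depend on the chosen lift because $\mathcal O(Z_0)$ is trivial.
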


\begin{proof}
By \Cref{prop:specialisation-chow},
$
\operatorname{sp}(\beta)=i_0^!(\overline\beta),
$
where $\overline\beta$ denotes the closure class on $\mathcal Z_R$.
Operational Chern classes commute with refined Gysin pull-backs.
Applying this to the Cartier divisor inclusion $i_0$ gives
\eqref{eq:chern-sp-commute}.
\end{proof}

Lemma~\ref{lem:chern-commutes-sp} gives the following lifting criterion for intersection classes defined by global line bundles on the family.

\begin{proposition}\label{prop:practical-lifting}
Let $\mathcal L_1,\dots,\mathcal L_m$ be line bundles on $\mathcal Z_R$.
Write
\begin{equation*}\label{eq:Lak-La0-def}
L_{a,K}:=\mathcal L_a|_{(\mathcal Z_R)_K},
\qquad
L_{a,0}:=\mathcal L_a|_{Z_0},
\qquad
L_{a,i}:=\mathcal L_a|_{\widetilde Z_i}.
\end{equation*}
Let $P\in\Z[t_1,\dots,t_m]$ be a polynomial. Then
\begin{equation}\label{eq:lifting-formula}
\operatorname{sp}\Bigl(
P\bigl(c_1(L_{1,K}),\dots,c_1(L_{m,K})\bigr)
\cap[(\mathcal Z_R)_K]
\Bigr)
=
P\bigl(c_1(L_{1,0}),\dots,c_1(L_{m,0})\bigr)\cap[Z_0].
\end{equation}
Moreover, if one denotes by
$
\alpha_i\in \CH_*(\widetilde Z_i)
$
the two component classes appearing in the componentwise decomposition
of the specialisation, then
\begin{equation}\label{eq:lifting-components}
\alpha_i=
P\bigl(c_1(L_{1,i}),\dots,c_1(L_{m,i})\bigr)\cap[\widetilde Z_i],
\qquad i=1,2,
\end{equation}
and these classes satisfy
\begin{equation}\label{eq:lifting-refined-match}
j_1^!\alpha_1=j_2^!\alpha_2
\qquad\text{in }\CH_*(Q).
\end{equation}
\end{proposition}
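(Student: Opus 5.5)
The plan is to reduce \Cref{prop:practical-lifting} to \Cref{lem:chern-commutes-sp}, with the line-bundle polynomial playing the role of the operational class, and then to use \Cref{prop:sp-decomp} for the componentwise statements.

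\emph{The global formula \eqref{eq:lifting-formula}.} Since Chern classes of line bundles commute with one another, $P(c_1(L_{1,K}),\dots,c_1(L_{m,K}))$ is a well-defined operational class on the generic fibre, and by linearity it suffices to treat monomials $c_1(L_{a_1,K})\cdots c_1(L_{a_k,K})\cap[(\mathcal Z_R)_K]$. I will argue by induction on $k$, applying \Cref{lem:chern-commutes-sp} one factor at a time. With $\beta$ the class of the shorter monomial capped with the fundamental class, the lemma pulls one $c_1$ outside $\operatorname{sp}$ each time. The base case is $\operatorname{sp}([(\mathcal Z_R)_K])=[Z_0]$. This holds because the closure of the generic fibre is all of $\mathcal Z_R$, which is flat over $R$, and $i_0^![\mathcal Z_R]=[Z_0]$ for the Cartier divisor $Z_0=\operatorname{div}(t)$.

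\emph{The component formula \eqref{eq:lifting-components}.} Here $\alpha_i$ means the class obtained by applying $i_i^!$ to the closure class. For each monomial, choose the representative
\[
\overline\beta:=P\bigl(c_1(\mathcal L_1),\dots,c_1(\mathcal L_m)\bigr)\cap[\mathcal Z_R].
\]
Because $\mathcal Z_R$ is integral and flat over $R$, this cycle restricts to the given class on the generic fibre. The decomposition of \Cref{prop:sp-decomp}, namely $i_0^!=\phi_{1*}i_1^!+\phi_{2*}i_2^!$ from $Z_0=\widetilde Z_1+\widetilde Z_2$, is linear on cycle classes and therefore applies to $\overline\beta$. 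Since Chern classes commute with refined Gysin maps \cite[Prop.~6.3]{Fulton},
\[
i_i^!\overline\beta=P\bigl(c_1(L_{1,i}),\dots,c_1(L_{m,i})\bigr)\cap i_i^![\mathcal Z_R],
\]
and $i_i^![\mathcal Z_R]=[\widetilde Z_i]$ because $\widetilde Z_i$ is a Cartier divisor on the regular scheme $\mathcal Z_R$. This gives \eqref{eq:lifting-components}.

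\emph{The matching \eqref{eq:lifting-refined-match}.} This follows exactly as in \Cref{prop:sp-decomp}, from $k^!=j_1^!i_1^!=j_2^!i_2^!$ applied to $\overline\beta$. Concretely, both sides equal $P(c_1(\mathcal L_a|_Q))\cap[Q]$.

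\emph{Main obstacle.} The delicate point is that \Cref{prop:sp-decomp} was stated for closures of integral subschemes, whereas $\overline\beta$ is a Chern-class cycle, so the choice of lift must be justified. I plan to handle this by noting that $\operatorname{sp}$ is well defined on classes: any two lifts of the generic class differ by a cycle supported on $Z_0$, and $i_0^!$ kills such cycles since $c_1(\mathcal O(Z_0))|_{Z_0}$ is trivial. The componentwise $\alpha_i$, by contrast, could in principle depend on the lift. I will therefore define them explicitly via the canonical lift $\overline\beta$ and state that this is the meaning of "the componentwise decomposition" in the proposition.
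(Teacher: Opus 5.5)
Your proposal is correct and follows the paper's route. Repeated application of \Cref{lem:chern-commutes-sp} gives the global formula, and applying the Cartier-divisor decomposition and the functoriality $k^!=j_1^!i_1^!=j_2^!i_2^!$ of \Cref{prop:sp-decomp} to the lift $P(c_1(\mathcal L_1),\dots,c_1(\mathcal L_m))\cap[\mathcal Z_R]$ yields the component classes and their matching on $Q$. Beyond the paper's argument, you make explicit three points it leaves implicit in the phrase ``refined pull-backs of the global specialised class'': the base case $\operatorname{sp}([(\mathcal Z_R)_K])=[Z_0]$, the identity $i_i^![\mathcal Z_R]=[\widetilde Z_i]$, and the fact that the $\alpha_i$ depend on the chosen lift and so must be defined through this canonical one.
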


\begin{proof}
Apply \Cref{lem:chern-commutes-sp} repeatedly to the fundamental
class $[(\mathcal Z_R)_K]$. This gives \eqref{eq:lifting-formula}.

Now apply \Cref{prop:sp-decomp} to the specialised
cycle
\[
P\bigl(c_1(L_{1,K}),\dots,c_1(L_{m,K})\bigr)\cap[(\mathcal Z_R)_K].
\]
Its two component classes are the refined pull-backs of the global specialised class to $\widetilde Z_1$ and $\widetilde Z_2$, hence are the classes in \eqref{eq:lifting-components}.
The refined matching \eqref{eq:lifting-refined-match} is then
\eqref{eq:sp-matching-general} of \Cref{prop:sp-decomp}.
\end{proof}

The component classes in \Cref{prop:practical-lifting} match automatically on $Q$; in the bases $(z_i,w_i)$ this is expressed by the formulas \eqref{eq:sigma-on-w}--\eqref{eq:sigma-on-z}.

\section{Kato--Nakayama spaces and the fixed-phase neck}
\label{sec:KN}

The semistable equation $t=uv$ retains angular information that is lost under ordinary specialisation.

In the local model $t=uv$, a point of $Z_t$
near $Q$ is described by two complex numbers $u$ and $v$ with
$uv=t=|t|e^{i\theta}$. As $|t|\to 0$, the moduli $|u|$ and $|v|$
both go to zero, but the phases $\rho_1=u/|u|$ and $\rho_2=v/|v|$
survive and satisfy $\rho_1\rho_2=e^{i\theta}$. The
\emph{Kato--Nakayama space} $\mathcal Z^{\log}$ is the topological
space retaining these phases. Over the double locus $Q$, fixing the
phase of $t$ leaves a circle of compatible phase pairs at each point. The fixed-phase fibre
\begin{equation}\label{eq:KN-intro-fibre}
Q^{\log}\big|_\theta
:= \bigl\{(\rho_1,\rho_2)\in S^1\times S^1\mid \rho_1\rho_2
=e^{i\theta}\bigr\} \longrightarrow Q
\end{equation}
is a principal $S^1$-bundle over $Q$. Only its local description in semistable charts $uv=t$ near the double locus is required; no global identification of a chart with the entire Donaldson--Friedman family is used.

\subsection{Divisorial log structures and the Kato--Nakayama space}
\label{subsec:KN-two-divisors}

The logarithmic notation needed to describe the phase information in the semistable local model $t=uv$ is recalled briefly. Standard references are
\cite{KatoNakayama1999,Ogus2018}.

Let $X$ be a complex manifold and let
$
D=D_1\cup D_2\subset X
$
be a simple normal crossings divisor with two smooth components
meeting transversely. Set $U:=X\setminus D$ and let
$j:U\hookrightarrow X$ be the inclusion.

\begin{definition}%\label{def:divisorial-log}
The \emph{divisorial log structure} associated with $(X,D)$ is the
sheaf of commutative monoids
$
M_X := \OO_X\cap j_*\OO_U^\times \subset j_*\OO_U,
$
whose sections over an open $V\subset X$ are the holomorphic
functions on $V$ that are invertible away from $D$. The inclusion
$
\alpha:M_X\longrightarrow \OO_X
$
is the \emph{structure morphism}, and the triple $(X,M_X,\alpha)$ is
a \emph{log analytic space}. It satisfies
$\alpha^{-1}(\OO_X^\times)\cong\OO_X^\times$, meaning that
invertible functions are the same in $M_X$ and in $\OO_X$.
\end{definition}

The characteristic monoid $\overline M_{X,x}:=M_{X,x}/\OO_{X,x}^\times$
records the local orders of vanishing: it is $0$ on $U$, isomorphic to
$\NN$ on $D_i\setminus D_j$, and isomorphic to $\NN^2$ on
$D_1\cap D_2$, with one generator for each local branch.

\begin{definition}\label{def:KN}
The \emph{Kato--Nakayama space} $X^{\log}$ is the set of pairs
$(x,h)$, where $x\in X$ and
$
h:M_{X,x}^{\mathrm{gp}}\longrightarrow S^1
$
is a group homomorphism from the groupification of $M_{X,x}$
satisfying
\begin{equation}\label{eq:KN-unit-condition}
h(u)=\frac{u(x)}{|u(x)|}
\qquad\text{for every }u\in \OO_{X,x}^\times\subset M_{X,x}.
\end{equation}
Equipped with the topology induced by local charts of the log
structure, the natural projection
\begin{equation*}\label{eq:KN-projection}
\rho:X^{\log}\longrightarrow X,\qquad (x,h)\longmapsto x,
\end{equation*}
is continuous and proper, and restricts to a homeomorphism over $U$.
\end{definition}
Over $U$ the projection $X^{\log}\to X$ is a homeomorphism. Over $D$, each local branch contributes one circle recording its phase.

Choose holomorphic coordinates $(z_1,\dots,z_n)$ near $x\in X$ with
\begin{equation*}\label{eq:local-divisors}
D_1\cap V=\{z_1=0\},\qquad D_2\cap V=\{z_2=0\}.
\end{equation*}
Write $z_i=r_i e^{i\theta_i}$ with $r_i\in\R_{\ge 0}$ and
$\theta_i\in S^1$. Then there is a canonical homeomorphism
\begin{equation*}\label{eq:KN-local-chart}
V^{\log}\cong
\bigl\{(r_1,\theta_1,r_2,\theta_2,z_3,\dots,z_n)
\mid r_i\in\R_{\ge 0},\ \theta_i\in S^1\bigr\},
\end{equation*}
under which the projection $\rho$ is
\begin{equation*}\label{eq:KN-local-proj}
\rho(r_1,\theta_1,r_2,\theta_2,z_3,\dots,z_n)
=(r_1 e^{i\theta_1},\, r_2 e^{i\theta_2},\, z_3,\dots,z_n).
\end{equation*}
Thus $X^{\log}$ is locally the real oriented blow-up of $X$ along $D$:
each complex coordinate $z_i$ vanishing on $D_i$ is replaced by its
modulus $r_i$ and its phase $\theta_i$. Under a coordinate change
$z_i'=u_i z_i$ with $u_i$ a holomorphic unit, the phase transforms by
\begin{equation*}\label{eq:phase-transform}
\theta_i' = \frac{u_i}{|u_i|}\,\theta_i,
\end{equation*}
so the local models glue canonically; see
\cite[\S1--\S2]{KatoNakayama1999}.
Define the boundary strata
\begin{equation*}\label{eq:boundary-strata}
\partial_i X^{\log}:=\rho^{-1}(D_i),\qquad
\partial_{12}X^{\log}:=\rho^{-1}(D_1\cap D_2).
\end{equation*}
The fibres of $\rho$ are:
\begin{equation*}\label{eq:KN-fibres}
\rho^{-1}(x)\cong
\begin{cases}
\{\mathrm{pt}\} & x\in U, \\
S^1 & x\in D_i\setminus D_j, \\
S^1\times S^1 & x\in D_1\cap D_2.
\end{cases}
\end{equation*}
Over a point of $D_1\cap D_2$ the fibre is a two-torus, one circle for
each branch. Imposing $\rho_1\rho_2=e^{i\theta}$ leaves the circle in
\eqref{eq:KN-intro-fibre}.

The construction applies equally to a normal crossings space. If
\begin{equation*}\label{eq:NCC-union}
X_0=X_1\cup_Q X_2,
\end{equation*}
with $X_1,X_2$ smooth and $Q\subset X_i$ a smooth divisor identified
through a fixed isomorphism, then $X_0$ carries the fine saturated log
structure induced by the normal crossing locus. Near a point of $Q$,
in coordinates $\{uv=0\}\subset\C^n$, the space $X_0^{\log}$ is
described by the same local coordinates, with one phase variable for each branch.
This is the case needed for the Donaldson--Friedman central fibre
$\widetilde Z=\widetilde Z_1\cup_Q \widetilde Z_2$.

\begin{figure}[htbp]
    \centering
\includegraphics[width=0.7\textwidth]{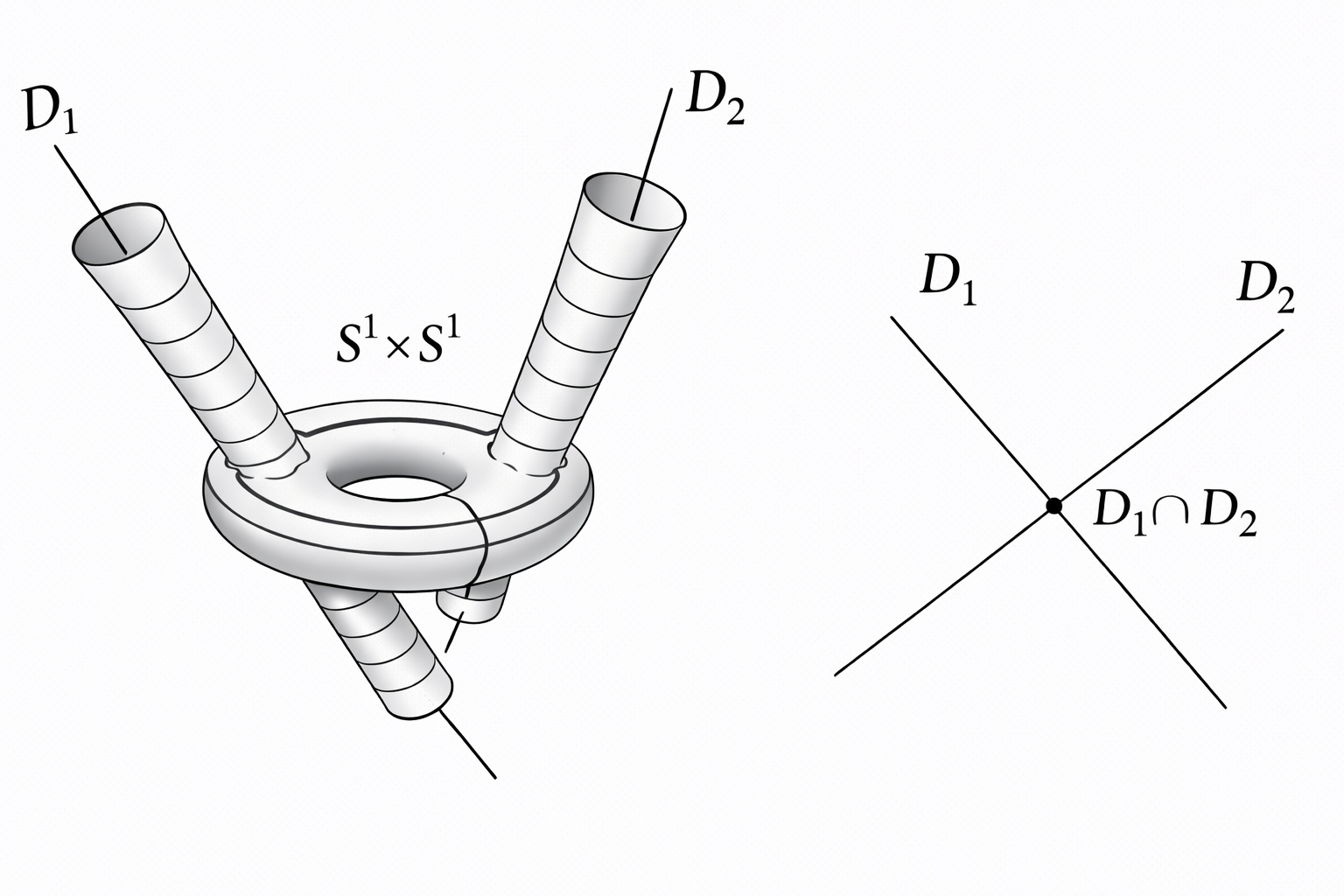}
   \caption{Local model of the Kato--Nakayama space near a normal crossing point.}
\end{figure}

\subsection{The fixed-phase neck and an auxiliary \texorpdfstring{$\RP^3$}{RP3}-construction}
\label{subsec:KN-real-lines}

The notation of Section~\ref{sec:CH-pushout} is retained: the SNC space
\(Z_0=\widetilde Z_1\cup_Q\widetilde Z_2\),
\(Q\simeq Q_1\simeq Q_2\),
with $\widetilde Z_i=\Bl_{\ell_i}(Z_i)$ and exceptional quadric
$Q\simeq \PP^1\times\PP^1$.
Fix the branch $Y_1:=\widetilde Z_1$ and consider the unit circle bundle of the normal line bundle of $Q$ in $Y_1$. The equation $uv=t$ is used only \emph{chartwise}, as a local analytic model near $Q$.

By a \emph{semistable chart adapted to $Y_1$ along $Q$} we mean a neighbourhood $V\subset Q$, an analytic space
$\mathcal V$, and a flat map
\[
\pi_V:\mathcal V\longrightarrow \Delta
\]
such that, analytically near $V$, the space $\mathcal V$ is described
by coordinates
$
(u,v,z_3,\dots,z_n,t)
$
with equation
\begin{equation*}\label{eq:semistable-chart-KN}
t=uv,
\qquad
\mathcal V_0=\{uv=0\},
\qquad
V=\{u=v=0\},
\end{equation*}
and such that the branch $\{u=0\}\subset\mathcal V_0$ is identified
with a neighbourhood of $V$ inside $Y_1$. Thus, inside the chosen
branch $Y_1$, the divisor $Q$ is locally cut out by the coordinate
$v$.
For $t\neq 0$, write
\[
\mathcal V_t:=\pi_V^{-1}(t),
\qquad
t=|t|e^{i\theta},
\]
and define the phase functions on the smooth fibre $\mathcal V_t=\{uv=t\}$:
\begin{equation*}\label{eq:phases-KN}
\rho_1:=u/|u|\in S^1,\qquad \rho_2:=v/|v|\in S^1,
\qquad \rho_1\rho_2=e^{i\theta}.
\end{equation*}
As $|t|\to 0$, the moduli $|u|,|v|\to 0$ but the phases survive,
subject to the single constraint $\rho_1\rho_2=e^{i\theta}$.

\begin{lemma}\label{lem:KN-neck-rewrite}
Let $V\subset Q$ be the base of a semistable chart adapted to $Y_1$,
and fix $\theta\in \R/2\pi\Z$.
\begin{enumerate}
\item\label{item:KN-neck-1}
The fibre $\bigl(\pi_V^{\log}\bigr)^{-1}(0,\theta)$ restricts over $V$
to a principal $S^1$-bundle
\[
P_{V,\theta}\longrightarrow V,
\]
whose fibre over a point of $V$ is
\begin{equation}\label{eq:fibre-circle-rewrite}
\bigl\{(\rho_1,\rho_2)\in S^1\times S^1
\mid \rho_1\rho_2=e^{i\theta}\bigr\}\cong S^1,
\end{equation}
with $S^1$ acting by
\begin{equation}\label{eq:S1-action-KN}
a\cdot(\rho_1,\rho_2)=(a\rho_1,a^{-1}\rho_2).
\end{equation}

\item\label{item:KN-neck-2}
For $t=|t|e^{i\theta}\neq 0$, define the equal-modulus neck
\begin{equation}\label{eq:neck-rewrite}
N_{V,t,\theta}:=
\bigl\{(u,v,q)\in \mathcal V_t \mid |u|=|v|=\sqrt{|t|}\bigr\}.
\end{equation}
Then $N_{V,t,\theta}\to V$ is a principal $S^1$-bundle.

\item\label{item:KN-neck-3}
The two $S^1$-bundles in \eqref{eq:fibre-circle-rewrite} and
\eqref{eq:neck-rewrite} are canonically isomorphic via
\begin{equation}\label{eq:identify-rewrite}
(u,v,q)\longmapsto \bigl(q;\,u/|u|,\,v/|v|\bigr).
\end{equation}
In particular, in every adapted semistable chart the fixed-phase Kato--Nakayama
circle is the topological limit of the equal-modulus neck as
$|t|\to 0$ along the ray $t=|t|e^{i\theta}$.
\end{enumerate}
\end{lemma}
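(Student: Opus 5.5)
The plan is to work entirely in the local semistable chart, where $\mathcal V$ is described by coordinates $(u,v,q)$ with $q=(z_3,\dots,z_n)$ parametrising $V$, and to compute both sides explicitly there. The key observation is that the log structure on $\mathcal V$ is divisorial for $\mathcal V_0=\{u=0\}\cup\{v=0\}$. By the local description of Kato--Nakayama spaces recalled after \Cref{def:KN}, $\mathcal V^{\log}$ is the real oriented blow-up
\[
\{(r_1,\rho_1,r_2,\rho_2,q)\mid r_i\ge0,\ \rho_i\in S^1\},
\qquad u=r_1\rho_1,\quad v=r_2\rho_2 .
\]
The log structure on $\Delta$ is given by $t$, so $\Delta^{\log}=\{(|t|,e^{i\theta})\}$. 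Since $t=uv$, the map $\pi_V^{\log}$ is
\[
(r_1,\rho_1,r_2,\rho_2,q)\longmapsto(r_1r_2,\ \rho_1\rho_2).
\]

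For \ref{item:KN-neck-1}, take the fibre over $(0,\theta)$ and restrict it to $\rho^{-1}(V)$, that is, to $r_1=r_2=0$. Every point of $V$ then carries the circle \eqref{eq:fibre-circle-rewrite}. The action \eqref{eq:S1-action-KN} preserves the product $\rho_1\rho_2$ and is free and transitive on each such circle. A chart-independent global trivialisation is given by $(q,\rho_1,\rho_2)\mapsto(q,\rho_1)$, since $\rho_2=e^{i\theta}\rho_1^{-1}$ is then determined. This gives a principal bundle, indeed a trivial one over the chart.

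For \ref{item:KN-neck-2}, suppose $t\ne0$. The condition $|u|=|v|=\sqrt{|t|}$ together with $uv=t$ forces $u=\sqrt{|t|}\rho_1$ and $v=\sqrt{|t|}\rho_2$ with $\rho_1\rho_2=e^{i\theta}$. Hence $N_{V,t,\theta}\cong V\times\{\rho_1\rho_2=e^{i\theta}\}$, and it carries the same $S^1$-action $(u,v)\mapsto(au,a^{-1}v)$, which preserves $uv$ and both moduli.

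For \ref{item:KN-neck-3}, the map \eqref{eq:identify-rewrite} is then visibly an $S^1$-equivariant homeomorphism over $V$, with inverse $(q;\rho_1,\rho_2)\mapsto(\sqrt{|t|}\rho_1,\sqrt{|t|}\rho_2,q)$. For the limit statement, the necks form the family
\[
\{(r_1,\rho_1,r_2,\rho_2,q)\mid r_1=r_2=s,\ \rho_1\rho_2=e^{i\theta}\},\qquad s\ge0,
\]
inside $\mathcal V^{\log}$. This family depends continuously on $s$, and its member at $s=0$ is $P_{V,\theta}$. So $P_{V,\theta}$ is the limit of the necks as $|t|=s^2\to0$.

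The main obstacle is canonicity. I need to show that the identification does not depend on the chosen coordinates $u,v$. Under a change $u'=\lambda u$, $v'=\mu v$ with $\lambda\mu=1$ (units are forced to satisfy this to preserve $t=uv$), the phases change by $\lambda/|\lambda|$ and $\mu/|\mu|$, exactly as in the phase-transformation rule. This is compatible on both sides. The one point to check is that the equal-modulus condition is not preserved exactly when $|\lambda|\ne1$; it is preserved only up to a fibrewise isotopy. So I would state canonicity up to this contractible choice, or equivalently note that the isomorphism class of the bundle is independent of the chart.
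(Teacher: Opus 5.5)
Your proposal is correct and follows essentially the same route as the paper. The paper reads off $\rho_1=h(u)$, $\rho_2=h(v)$ with $\rho_1\rho_2=h(t)=e^{i\theta}$ directly from the definition of the Kato--Nakayama space, which is exactly what your real oriented blow-up coordinates encode; it then notes that the equal-modulus locus leaves the same constrained phase pair and identifies the two circles by the evident equivariant map. Your explicit inverse, your realisation of the limit as the slice $r_1=r_2=s\to 0$ inside $\mathcal V^{\log}$, and your caveat that the identification is canonical only up to isotopy (the condition $|u|=|v|$ depends on the chart, as does your trivialisation by $\rho_1$, which is global over $V$ but not chart-independent) are refinements that the paper leaves implicit.
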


\begin{proof}
\emph{Proof of \ref{item:KN-neck-1}.}
By \Cref{def:KN}, a point of
$\bigl(\pi_V^{\log}\bigr)^{-1}(0,\theta)$ above $x\in V$ is a
homomorphism
\[
h:M_{\mathcal V,x}^{\mathrm{gp}}\to S^1
\]
satisfying \eqref{eq:KN-unit-condition}. If $u$ and $v$ are the two
branch coordinates in the semistable chart, set
\[
\rho_1:=h(u),\qquad \rho_2:=h(v).
\]
Since $uv=t$ and the point of the base is $(0,\theta)\in\Delta^{\log}$,
one has
\[
\rho_1\rho_2=h(t)=e^{i\theta}.
\]
Thus the fibre is exactly \eqref{eq:fibre-circle-rewrite}, and the
action \eqref{eq:S1-action-KN} is free and transitive.
\emph{Proof of \ref{item:KN-neck-2}.}
In the local model $uv=t$, the condition $|u|=|v|$ together with
$|u||v|=|t|$ is equivalent to
\[
|u|=|v|=\sqrt{|t|}.
\]
Hence $N_{V,t,\theta}$ is precisely the locus of equal moduli. Over a
fixed point of $V$, the remaining freedom is the phase pair
\[
(\rho_1,\rho_2)=\bigl(u/|u|,\ v/|v|\bigr)\in S^1\times S^1
\]
subject to $\rho_1\rho_2=e^{i\theta}$, so the fibre is $S^1$.
\emph{Proof of \ref{item:KN-neck-3}.}
The map \eqref{eq:identify-rewrite} is well defined because on
$N_{V,t,\theta}$ one has
\[
\frac{u}{|u|}\frac{v}{|v|}=\frac{uv}{|uv|}=e^{i\theta}.
\]
It is fibrewise $S^1$-equivariant for the action
\eqref{eq:S1-action-KN}, and in local coordinates it is plainly a
bundle isomorphism.
\end{proof}

Fixing the branch $Y_1=\widetilde Z_1$, the local circles glue to a global circle bundle.

\begin{lemma}\label{lem:normal-exceptional}
For each branch $Y_i=\widetilde Z_i$, the normal bundle of the
exceptional divisor $Q\subset Y_i$ satisfies
$
\mathcal N_{Q/Y_i}\simeq \OO_Q(-1).
$
\end{lemma}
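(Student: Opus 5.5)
The plan is to reduce the claim to the universal description of the normal bundle of an exceptional divisor, and then to match it with the convention for $\OO_{Q_i}(1)$ fixed in \eqref{eq:xi-zw} and \Cref{lem:chow-quadric}.

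\textbf{Step 1 (universal normal bundle).} For the blow-up $f_i:\widetilde Z_i=\Bl_{\ell_i}(Z_i)\to Z_i$ of a smooth centre, the exceptional divisor is $Q_i=\PP(\mathcal N_{\ell_i/Z_i})$. Its normal bundle in $\widetilde Z_i$ is the tautological sub-line bundle of $g_i^*\mathcal N_{\ell_i/Z_i}$, that is,
\[
\mathcal N_{Q_i/\widetilde Z_i}\simeq \OO_{\PP(\mathcal N_{\ell_i/Z_i})}(-1)
\]
\cite[B.6.3]{Fulton}. I will check this in local coordinates. Near a point of $\ell_i$, take coordinates $(x,y,s)$ with $\ell_i=\{x=y=0\}$, and use the chart $x=x',\ y=x'y'$. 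There $Q_i=\{x'=0\}$, and the conormal generator $x'$ maps to the tautological linear form on the fibre line $[x:y]$. This identifies the conormal bundle with $\OO_{\PP(\mathcal N)}(1)$, and hence the normal bundle with its dual. The construction is intrinsic, so the chart identifications glue.

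\textbf{Step 2 (matching with the paper's $\OO_{Q_i}(1)$).} The paper defines $\OO_{Q_i}(1)$ to be the tautological bundle of $Q_i=\PP(\mathcal N_{\ell_i/Z_i})$, with $\mathcal N_{\ell_i/Z_i}\simeq\OO_{\PP^1}(1)^{\oplus2}$, and records its first Chern class as $\xi_i=b_i+w_i$ in \eqref{eq:xi-zw}. Under this identification, Step 1 gives $\mathcal N_{Q_i/\widetilde Z_i}\simeq\OO_Q(-1)$ and $c_1(\mathcal N_{Q_i/\widetilde Z_i})=-\xi_i$. This is the form used in \eqref{eq:Q-self-restriction}. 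The argument is symmetric in $i$, so the same proof covers $Y_2$.

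\textbf{Main obstacle: normalising the tautological bundle.} The delicate step is fixing the twist of the tautological bundle on $\PP(\OO(1)\oplus\OO(1))\simeq\ell\times\PP^1$. Passing to $\PP(\OO\oplus\OO)$ changes the relative $\OO(1)$ by $g^*\OO_\ell(\pm1)$. So I will recompute $c_1$ directly from the Grothendieck relation $\xi^2-g^*c_1(\mathcal N)\,\xi+g^*c_2(\mathcal N)=0$, together with $b^2=0$ and $\xi\cdot b=1$.

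As a consistency check I will compare with the self-intersection $Q_i^3=\deg\bigl(c_1(\mathcal N_{Q_i/\widetilde Z_i})^2\bigr)$. For the blow-up of a curve this equals $-\deg\mathcal N_{\ell_i/Z_i}+2-2g(\ell_i)=-2+2=0$ up to the standard correction. I expect this check may reveal that the sub-line bundle actually has bidegree $(1,-1)$ rather than $(-1,-1)$. In that case the identification $\OO_Q(1)=\OO_Q(1,1)$ in \eqref{eq:xi-zw} would need revision, and the statement would hold only with $\OO_Q(1)$ read as the relative tautological bundle $\OO_{\PP(\mathcal N)}(1)$.
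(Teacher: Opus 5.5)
\textbf{Verdict.} Your Step 1 is correct in Fulton's convention, where $P(\mathcal N)$ means lines in $\mathcal N$, i.e.\ $\Proj\Sym\mathcal N^\vee$. It is the paper's own proof: $\mathcal N_{Q/Y_i}=\OO_{Y_i}(Q)|_Q$, and the Proj description of the blow-up gives $\OO_{Y_i}(-Q)|_Q=\OO(1)$ of $Q=\Proj_\ell\Sym(I_\ell/I_\ell^2)$.

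\textbf{Where it fails.} The gap is Step 2. There you, like the paper, identify this $\OO(1)$ with $\OO_Q(1)=\OO_Q(1,1)$, whose class is $\xi_i=b_i+w_i$ by \eqref{eq:xi-zw}. That identification is false, and the suspicion in your last paragraph is correct; it should be your conclusion, not a hedge. Since $I_\ell/I_\ell^2=\mathcal N^\vee_{\ell/Z}\simeq\OO_\ell(-1)\otimes\C^2$, the exceptional divisor is $\Proj\Sym(\mathcal N^\vee)$, which is $\PP(\mathcal N^\vee)$ in the paper's $\Proj\Sym$ convention, not $\PP(\mathcal N)$. Its $\OO(1)$ is $\OO_Q(-1,1)$. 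This differs from $\OO_{\PP(\mathcal N)}(1)=\OO_Q(1,1)$ by $g^*\det\mathcal N=g^*\OO_\ell(2)$. Hence
\[
\mathcal N_{Q/Y_i}\simeq\OO_Q(1,-1),\qquad c_1(\mathcal N_{Q/Y_i})=b_i-w_i=z_i\neq-\xi_i .
\]
To see this directly, the tautological sub-line bundle has degree $-1$ on fibres of $g$. On the section $\ell\times\{\mathrm{pt}\}$ (class $w$) defined by a subbundle $L\simeq\OO_\ell(1)\subset\mathcal N_{\ell/Z}$ it restricts to $L$, so it has degree $+1$ there. For example, in $Z=\PP^3$ the strict transform of a plane through $\ell$ is $H-Q$ and cuts $Q$ in such a section, so $Q|_Q=b-w$.

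\textbf{Your two checks.} Neither is set up correctly.
\begin{itemize}
\item The relation $\xi^2-g^*c_1(\mathcal N)\xi+g^*c_2(\mathcal N)=0$ belongs to $\Proj\Sym\mathcal N$ and simply returns $\xi=b+w$, so it cannot detect the error. For $\zeta:=c_1(\OO_{Y_i}(-Q)|_Q)$ on $\Proj\Sym\mathcal N^\vee$ the relevant relation is $\zeta^2+g^*c_1(\mathcal N)\zeta=0$. Together with $\zeta\cdot b=1$, this forces $\zeta=w-b$.
\item Your self-intersection formula is wrong. For the blow-up of a curve in a threefold, $Q^3=-\deg\mathcal N_{\ell/Z}$, equivalently $K_Z\cdot\ell+2-2g(\ell)$. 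Here that is $-2$, not $0$. Since $(b-w)^2=-2$ while $(b+w)^2=+2$, this check rules out $\OO_Q(-1,-1)$.
\end{itemize}
A conceptual confirmation comes from d-semistability. Since $Z_0=\operatorname{div}(t)$ on the smooth total space, one needs $\mathcal N_{Q/Y_1}\otimes\sigma^*\mathcal N_{Q/Y_2}\simeq\OO_Q$. This holds for $\OO_Q(1,-1)$ exactly because $\sigma$ swaps the rulings. It would fail for two copies of $\OO_Q(-1,-1)$, whose product is $\OO_Q(-2,-2)$.

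\textbf{What the statement really says.} The lemma is correct only if $\OO_Q(1)$ is read as $\OO_{Y_i}(-Q)|_Q=\OO_Q(-1,1)$. In the paper's own reading it is false. The paper later treats $\mathcal N_{Q_i/\widetilde Z_i}\simeq\OO_{Q_i}(-1,-1)$ as a \emph{negative} bundle in the Grauert argument. It also writes $j_1^*[Q_1]=-\xi_1$ in \eqref{eq:Q-self-restriction}; that equation actually becomes $z_1$ on one side versus $\sigma^*z_2=-z_1$ on the other. The paper's proof makes the same silent identification you make in Step 2.
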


\begin{proof}
Fix $i\in\{1,2\}$. Since $Q\subset Y_i=\widetilde Z_i$ is a Cartier
divisor, one has
$
\mathcal N_{Q/Y_i}\simeq\OO_{Y_i}(Q)|_Q.
$
The Proj description of the blow-up gives
$
\OO_{Y_i}(-Q)|_Q\simeq\OO_Q(1),
$
hence
$
\OO_{Y_i}(Q)|_Q\simeq\OO_Q(-1).
$
Therefore
\[
\mathcal N_{Q/Y_i}\simeq \OO_Q(-1).
\]
See \cite[II.7]{Hartshorne} or \cite[Appendix~B]{Fulton}.
\end{proof}

\begin{lemma}\label{lem:S1-associated-line}
Let $P\to B$ be a principal $S^1$-bundle over a paracompact space,
and let $L_P:=P\times_{S^1}\C$ be the associated complex line bundle.
Then $c_1(P)=c_1(L_P)\in H^2(B;\Z)$, and after choosing a Hermitian
metric on $L_P$ the unit circle bundle $S(L_P)\to B$ satisfies
\begin{equation*}\label{eq:circle-bundle-iso}
P\simeq S(L_P)
\end{equation*}
$S^1$-equivariantly, canonically up to homotopy.
\end{lemma}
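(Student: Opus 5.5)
The plan is to prove both assertions by building the two identifications explicitly and then checking that they are mutually inverse up to equivariant isomorphism. Fix a Hermitian metric on $L_P$, which exists because $B$ is paracompact: choose local trivialisations subordinate to a locally finite cover, use a partition of unity, and sum the standard metrics.

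For the bundle isomorphism, consider
\[
\Phi:P\longrightarrow S(L_P),\qquad p\longmapsto [p,1].
\]
The steps are as follows.
\begin{enumerate}
\item Check that $\Phi$ is well defined and $S^1$-equivariant. The relation $[pa,1]=[p,a]=a\cdot[p,1]$ shows this, with $S^1$ acting on $L_P$ by fibrewise scalar multiplication.
\item Check that $\Phi$ lands in the unit circle bundle. The metric can be chosen so that $|[p,\lambda]|=|\lambda|$; this is the metric induced from the standard $S^1$-invariant metric on $\C$, which is well defined because $S^1\subset U(1)$.
\item Check that $\Phi$ is a homeomorphism. It is a fibrewise bijection between free transitive $S^1$-spaces, hence bijective. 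It is a homeomorphism because in a local trivialisation $P|_U\simeq U\times S^1$ it reads $(x,a)\mapsto (x,a)$.
\end{enumerate}
For an arbitrary metric $h$ on $L_P$, the metrics $h$ and $h_0$ are connected by the convex path $(1-s)h_0+sh$. Radial rescaling $v\mapsto v/|v|_{h_s}$ then gives equivariant isomorphisms $S_{h_0}(L_P)\simeq S_h(L_P)$ that vary continuously in $s$. This is the sense in which the identification is canonical up to homotopy.

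For the Chern class statement, there are two approaches.
\begin{itemize}
\item Use the definition $c_1(P):=c_1(L_P)$, which is standard via the classifying map $B\to BS^1=BU(1)$. Under this definition the equality is a tautology, once we note that $P$ and $L_P$ have the same classifying map $B\to \C P^\infty$.
\item If $c_1(P)$ is instead defined as the Euler class of the oriented circle bundle, or via the transition cocycle $g_{\alpha\beta}:U_{\alpha\beta}\to S^1$ and the connecting map $H^1(B;\underline{S^1})\to H^2(B;\Z)$, note that $L_P$ has exactly the same transition functions $g_{\alpha\beta}$, now viewed in $U(1)\subset \C^\times$. Therefore the two Čech classes coincide. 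The Euler class of $S(L_P)$ equals $c_1(L_P)$ by the standard fact for complex line bundles, and then $\Phi$ transports it to $P$.
\end{itemize}

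The main obstacle is keeping the orientation and sign conventions consistent. One must make sure that the action of $S^1$ on $P$, whether on the right or the left, matches the scalar action on $\C$ in $P\times_{S^1}\C$, so that no inverse is introduced; an inverse would replace $c_1$ by $-c_1$. I would fix the convention $[pa,\lambda]=[p,a\lambda]$ at the outset and verify on the Hopf bundle over $\PP^1$ that the resulting class is the generator with the sign intended by the paper, namely the tautological versus hyperplane convention. This matters later for $\mathcal N_{Q/Y_1}\simeq\OO_Q(-1)$ and for the Chern class $2$ claim.
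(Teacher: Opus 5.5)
Your proposal is correct and is essentially the paper's argument: the paper's entire proof is your second bullet, namely that $P$ and $L_P$ are glued by the same $S^1$-valued cocycle and that $c_1$ is the connecting class of the exponential sequence. Your explicit equivariant map $p\mapsto[p,1]$ and your convex-path argument on Hermitian metrics only spell out the isomorphism $P\simeq S(L_P)$ and the ``canonical up to homotopy'' clause, which the paper leaves implicit.
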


\begin{proof}
Both $P$ and $L_P$ are glued by the same $S^1$-valued cocycle, and
$c_1$ is the connecting class in the exponential sequence; see
\cite[Ch.~4]{Husemoller} or \cite[\S14]{MilnorStasheff}.
\end{proof}

\begin{lemma}\label{lem:O1-restrict-fibre}
With $N_{\ell/Z}\simeq\OO_{\PP^1}(1)^{\oplus 2}$ and
$Q=\PP(N_{\ell/Z})$, for every fibre $F=g^{-1}(x)\simeq\PP^1$ of
$g:Q\to\ell$ one has
\begin{equation*}\label{eq:O1-restrict}
\OO_Q(1)\big|_F\simeq \OO_{\PP^1}(1).
\end{equation*}
\end{lemma}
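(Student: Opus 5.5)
The plan is to compute the restriction directly from the definition of $\OO_Q(1)$ on the projective bundle $Q=\PP(N_{\ell/Z})$. The key point is that restricting a projective bundle to a fibre over a point recovers the projective space of that fibre, together with its own tautological bundle. Concretely, let $x\in\ell$ and let $F=g^{-1}(x)$. Base change of the projective bundle along the inclusion $\{x\}\hookrightarrow\ell$ gives
\[
F=\PP\bigl(N_{\ell/Z}\otimes k(x)\bigr)\simeq\PP(\C^2)\simeq\PP^1,
\]
because the formation of $\PP(\mathcal E)$ and of $\OO_{\PP(\mathcal E)}(1)$ commutes with base change. It follows that $\OO_Q(1)|_F$ is the tautological (Serre twisting) line bundle of $\PP(\C^2)$, which is $\OO_{\PP^1}(1)$. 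This holds for either convention (lines or quotients), since in both cases the relevant bundle has degree $+1$ on a fibre.

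As a consistency check, I would compare with the numerical description already in hand. By \eqref{eq:xi-zw} and \Cref{lem:chow-quadric} we have $\xi=b+w$, and $F$ has class $b$. Hence
\[
\deg\bigl(\OO_Q(1)|_F\bigr)=(b+w)\cdot b=1,
\]
using $b^2=0$ and $bw=[\mathrm{pt}]$. Since line bundles on $\PP^1$ are classified by their degree, this recovers the claim a second time.

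The only delicate step is the sign convention for $\OO_Q(1)$. If the paper's $\PP$ were the projectivisation of lines with $\OO(-1)$ taken as the tautological subbundle, the restriction could a priori be confused with its dual. I would resolve this by using \Cref{lem:normal-exceptional}, where $\OO_{Y}(-Q)|_Q\simeq\OO_Q(1)$ is the conormal bundle. On a fibre $F$, the normal bundle of the exceptional divisor restricts to $\OO_{\PP^1}(-1)$: this is the standard fact that $F$ is a line in the exceptional $\PP^1$ over a point of a blown-up curve, so that $Q\cdot F=-1$. Dualising, $\OO_Q(1)|_F\simeq\OO_{\PP^1}(1)$, which fixes the sign consistently with the rest of the paper.
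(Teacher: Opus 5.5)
Your proposal is correct and takes the same approach as the paper: restricting the projective bundle to a point gives $F=\PP\bigl((N_{\ell/Z})_x\bigr)\simeq\PP^1$, and $\OO_Q(1)|_F$ is its tautological bundle $\OO_{\PP^1}(1)$. One caveat: your numerical check via $\xi=b+w$ is not an independent second proof, because the paper derives that identity from $\OO_Q(1)\simeq\OO_Q(1,1)$, which already contains the claim; the base-change argument is what actually establishes the lemma.
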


\begin{proof}
The fibre is $F=\PP((N_{\ell/Z})_x)\simeq\PP^1$, and $\OO_Q(1)$
restricts to the tautological quotient bundle on that fibre.
\end{proof}

Write $Y_i:=\widetilde Z_i$. By \Cref{lem:normal-exceptional},
for each branch $Y_i$ one has
\begin{equation*}\label{eq:normal-bundle-recall}
Q\subset Y_i,\ 
\mathcal N_{Q/Y_i}\simeq\OO_Q(-1),\ 
Q=\PP(N_{\ell/Z})\simeq\PP^1\times\PP^1.
\end{equation*}
All branchwise constructions in this subsection use $Y_1$. The convention
\[\PP(E)=\Proj(\Sym E)\]
is used throughout.

\begin{lemma}\label{lem:KN-is-normal-circle}
Fix a phase $\theta\in \R/2\pi\Z$. The local bundles
$P_{V,\theta}\to V$ from \Cref{lem:KN-neck-rewrite}, constructed
on semistable charts adapted to $Y_1$, glue canonically to a global
principal $S^1$-bundle over $Q$. This global bundle is canonically
isomorphic, up to isotopy, to the unit circle bundle
\[
S(\mathcal N_{Q/Y_1})\longrightarrow Q.
\]
It is denoted by
$
Q^{\log}\big|_\theta\longrightarrow Q.
$
In particular,
\begin{equation*}\label{eq:c1-KN-normal-rewrite}
c_1\!\left(Q^{\log}\big|_\theta\right)
=c_1(\mathcal N_{Q/Y_1})\in H^2(Q;\Z).
\end{equation*}
\end{lemma}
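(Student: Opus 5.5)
The plan is to show that the local circle bundles $P_{V,\theta}$ of \Cref{lem:KN-neck-rewrite} transform under change of semistable chart exactly as the unit circle bundle of $\mathcal N_{Q/Y_1}$ does. The key is to use the coordinate $\rho_2=h(v)$ as a fibre coordinate. Since $\rho_1\rho_2=e^{i\theta}$ with $\theta$ fixed, the projection $(\rho_1,\rho_2)\mapsto\rho_2$ identifies each fibre \eqref{eq:fibre-circle-rewrite} with $S^1$. This identification is equivariant for the action \eqref{eq:S1-action-KN} composed with $a\mapsto a^{-1}$; in particular it is a principal bundle isomorphism up to inversion of the structure group. So it suffices to understand how $\rho_2$ changes between charts.

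First I would fix two adapted charts over $V,V'\subset Q$ with coordinates $(u,v,z)$ and $(u',v',z')$, satisfying $t=uv=u'v'$. The branches are intrinsic and the charts are adapted to $Y_1$, so $\{u=0\}$ and $\{u'=0\}$ both describe $Y_1$ near $V\cap V'$. Hence $u'=\lambda u$ for a holomorphic unit $\lambda$ on $\mathcal V\cap\mathcal V'$, and then $v'=\lambda^{-1}v$.

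Restricting to the branch $Y_1=\{u=0\}$, the function $v|_{Y_1}$ is a local defining equation of $Q$ in $Y_1$. Its differential $dv|_Q$ is therefore a nowhere-vanishing section of the conormal bundle $\mathcal N^\vee_{Q/Y_1}$ over $V$, and dually gives a local trivialisation of $\mathcal N_{Q/Y_1}$. On overlaps these trivialisations differ by the cocycle $\mu:=\lambda^{-1}|_Q$, since $dv'|_Q=\mu\,dv|_Q$.

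On the Kato--Nakayama side, the transformation rule for phases gives $\rho_2'=(\mu/|\mu|)\,\rho_2$, with $\rho_1'$ changing by the inverse factor, so the constraint $\rho_1'\rho_2'=e^{i\theta}$ is preserved. Thus the bundles $P_{V,\theta}$ glue along the $S^1$-valued cocycle $\mu/|\mu|$. This cocycle is the unitary normalisation of the transition cocycle of $\mathcal N_{Q/Y_1}$ (or of its dual, depending on the sign convention, which I will fix carefully). The normalisation $\mu\mapsto\mu/|\mu|$ is a deformation retraction $\C^\times\to S^1$. Choosing a Hermitian metric therefore yields an isomorphism with $S(\mathcal N_{Q/Y_1})$, canonical up to isotopy, since the space of metrics is contractible. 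The cocycle condition holds automatically, because the $\lambda$ come from genuine coordinate changes. The statement about $c_1$ then follows from \Cref{lem:S1-associated-line}.

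The main obstacle I expect is bookkeeping of orientations and duals. One must decide whether the phase of $v$ is a coordinate on the normal or on the conormal bundle, and whether the principal action \eqref{eq:S1-action-KN} matches the standard action on $S(\mathcal N)$ or its inverse. Each of these choices flips the sign of $c_1$, so I would check the conventions on a single fibre. I would also verify that $\lambda$ is genuinely a unit on the total space and not merely on $Q$. This requires that $u$ and $u'$ cut out the same reduced branch with multiplicity one, which follows from flatness and the reduced semistable fibre $\{uv=0\}$.
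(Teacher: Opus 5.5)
Your proposal is correct and is essentially the paper's argument. In both, the phase of the local equation $v$ of $Q$ in $Y_1$ serves as the fibre coordinate, and under a change of adapted chart it transforms by the unit normalisation of the transition cocycle of $\mathcal N_{Q/Y_1}$; the conclusion then follows from \Cref{lem:S1-associated-line}. Your extra bookkeeping sharpens the paper's sketch: you derive $v'=\lambda^{-1}v$ from $uv=u'v'$, and you use $dv|_Q$, which shows that $\rho_2$ transforms like the coordinate $dv(n)$ of $n\in\mathcal N_{Q/Y_1}$, so your hedge resolves in favour of $S(\mathcal N_{Q/Y_1})$ itself. Your observation that the action \eqref{eq:S1-action-KN} rotates $\rho_2$ inversely is also correct, and it means the displayed $c_1$ equality needs the fibres oriented by $\rho_2$; the paper leaves this sign implicit here and only later concedes it as ``$\pm$''.
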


\begin{proof}
Since $Q \simeq \PP^1 \times \PP^1$ is compact and the semistable local
model $t = uv$ holds analytically near every point of $Q$ in the
Donaldson--Friedman family, we can cover $Q$ by finitely many open sets
$\{V_\alpha\}$, each equipped with a semistable chart adapted to the
branch $Y_1$. It suffices to work in these charts and verify that the
local constructions glue consistently.
In each chart $V_\alpha$, let $v_\alpha$ be the local coordinate cutting
out $Q$ inside the branch $Y_1$. By
\Cref{lem:KN-neck-rewrite}, the fibre of the Kato--Nakayama
phase bundle over $V_\alpha$ is a principal $S^1$-bundle, and the
relevant local datum is simply the phase of $v_\alpha$:
\[
\eta_\alpha := \frac{v_\alpha}{|v_\alpha|} \in S^1.
\]
Once $\eta_\alpha$ is known, the phase of the other branch is determined
by the constraint $\rho_{1,\alpha} \cdot \eta_\alpha = e^{i\theta}$,
namely $\rho_{1,\alpha} = e^{i\theta}\eta_\alpha^{-1}$.
On an overlap $V_\alpha \cap V_\beta$, the two local defining equations
of $Q$ inside $Y_1$ are related by a nowhere-vanishing holomorphic
function $a_{\alpha\beta} \in \mathcal{O}_Q^\times$:
\[
v_\alpha = a_{\alpha\beta} \, v_\beta.
\]
The local phase coordinate therefore transforms as
\[
\eta_\alpha = \frac{a_{\alpha\beta}}{|a_{\alpha\beta}|}\,\eta_\beta.
\]
This is precisely the transition rule for the unit circle bundle of the
line bundle $\mathcal{N}_{Q/Y_1}$, whose cocycle is
$(a_{\alpha\beta})$. Hence the local phase bundles defined in each chart
glue to a globally defined principal $S^1$-bundle, canonically
isomorphic to $S(\mathcal{N}_{Q/Y_1}) \to Q$. The equality of first
Chern classes then follows from \Cref{lem:S1-associated-line}.
\end{proof}

The identification in \Cref{lem:KN-is-normal-circle} is branchwise: after choosing $Y_1=\widetilde Z_1$, one has $Q^{\log}|_\theta\simeq S(\mathcal N_{Q/Y_1})$. Only the local semistable equation $uv=t$ near $Q$ is used.

\begin{proposition}\label{prop:KN-degree-fibre}
Identify $Y_1$ with $\widetilde Z_1$. For a ruling fibre
$F\simeq\PP^1$ of $g:Q\to\ell$, the restriction
\[
\left(Q^{\log}\big|_\theta\right)\big|_F\to F
\]
is a principal $S^1$-bundle with
\begin{equation*}\label{eq:c1-KN-fibre}
\left|\,c_1\!\left(\left(Q^{\log}\big|_\theta\right)\big|_F\right)\right|
=1
\qquad\text{in }H^2(F;\Z)\cong\Z.
\end{equation*}
In particular, its total space is diffeomorphic to $S^3$.
\end{proposition}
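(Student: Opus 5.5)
By \Cref{lem:KN-is-normal-circle}, $Q^{\log}|_\theta$ is isomorphic, up to isotopy, to the unit circle bundle $S(\mathcal N_{Q/Y_1})\to Q$. Its first Chern class is therefore $c_1(\mathcal N_{Q/Y_1})$. Restriction of principal $S^1$-bundles commutes with pull-back along the inclusion $F\hookrightarrow Q$, and $c_1$ is natural. So the restricted bundle $(Q^{\log}|_\theta)|_F$ is again a principal $S^1$-bundle, namely $S(\mathcal N_{Q/Y_1}|_F)$, and it has Chern class
\[
c_1\bigl((Q^{\log}|_\theta)|_F\bigr)=c_1\bigl(\mathcal N_{Q/Y_1}|_F\bigr)\in H^2(F;\Z).
\]
This reduces the statement to a degree computation for a holomorphic line bundle on $F\simeq\PP^1$.

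The computation itself goes as follows. \Cref{lem:normal-exceptional} gives $\mathcal N_{Q/Y_1}\simeq\OO_Q(-1)$. \Cref{lem:O1-restrict-fibre} gives $\OO_Q(1)|_F\simeq\OO_{\PP^1}(1)$. Hence $\mathcal N_{Q/Y_1}|_F\simeq\OO_{\PP^1}(-1)$, whose first Chern class is $-1$ times the generator of $H^2(F;\Z)\cong\Z$. The absolute value in the statement is there precisely because the sign depends on orientation conventions, in particular on whether one uses $S(L)$ or its conjugate and on the sign convention for the action \eqref{eq:S1-action-KN}. As a consistency check, we can use the Chow classes of \Cref{lem:chow-quadric}: the fibre $F$ has class $b$, and $\xi\cdot b=(b+w)\cdot b=1$, so $\deg(\OO_Q(-1)|_F)=-1$, which agrees with the above.

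For the final assertion, a principal $S^1$-bundle over $S^2$ is classified by its Euler number $e\in\Z$. The total space of the bundle with $e=\pm1$ is the Hopf fibration $S^3\to S^2$; equivalently, it is the unit circle bundle of the tautological bundle $\OO_{\PP^1}(-1)$. I would make this explicit: the unit sphere $S(\OO_{\PP^1}(-1))$ is the set $\{(\ell,x): x\in\ell,\ |x|=1\}$, which projects diffeomorphically onto $S^3\subset\C^2$. Since the bundle we have is the unit circle bundle of the line bundle $\OO(-1)$, the diffeomorphism with $S^3$ is immediate. If instead the sign is $+1$, complex conjugation of fibres gives an orientation-reversing bundle isomorphism with the same total space.

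The main obstacle is the identification in \Cref{lem:KN-is-normal-circle} up to isotopy, and making sure that "diffeomorphic" rather than merely "homeomorphic" is justified. I would handle this by noting that the isomorphism there comes from transition functions $a_{\alpha\beta}/|a_{\alpha\beta}|$, which are smooth. The Kato--Nakayama circle bundle therefore carries a smooth structure as an $S^1$-bundle, and it is smoothly isomorphic to $S(\OO_{\PP^1}(-1)|_F)$. Smooth principal $S^1$-bundles with equal $c_1$ are smoothly isomorphic, so the total space is diffeomorphic to $S^3$.
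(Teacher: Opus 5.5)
Your proposal is correct and is essentially the paper's proof. You identify $Q^{\log}|_\theta$ with $S(\mathcal N_{Q/Y_1})$, restrict to $F$ to get the unit circle bundle of $\mathcal N_{Q/Y_1}|_F\simeq\OO_{\PP^1}(-1)$, and conclude that it is the Hopf bundle up to orientation; your explicit identification $S(\OO_{\PP^1}(-1))\cong S^3$ and the smoothness remark only spell out what the paper leaves implicit.

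One caution: your Chow-class check is not independent of \Cref{lem:normal-exceptional}. All the argument actually needs is the general fact that the normal bundle of an exceptional divisor has degree $-1$ on each fibre of $Q\to\ell$. This matters, because globally $c_1(\mathcal N_{Q/Y_1})$ is $b-w$ rather than $-\xi=-(b+w)$; you can check this on $\Bl_\ell\PP^3$, or via d-semistability $\mathcal N_{Q/Y_1}\otimes\sigma^*\mathcal N_{Q_2/Y_2}\simeq\OO_Q$. Since $(b-w)\cdot b=-1$ as well, the statement is unaffected.
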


\begin{proof}
By \Cref{lem:KN-is-normal-circle} and
\Cref{lem:normal-exceptional}, we obtain
$
Q^{\log}|_\theta\simeq S(\OO_Q(-1)).
$
Restricting to $F$ via \Cref{lem:O1-restrict-fibre} gives
\[
\OO_Q(-1)|_F\simeq\OO_{\PP^1}(-1).
\]
Therefore the associated circle bundle has first Chern class equal to
$\pm1$, depending on the orientation convention for the $S^1$-fibres.
Hence its total space is the Hopf bundle up to orientation, and in
particular is diffeomorphic to $S^3$.
\end{proof}

\begin{lemma}\label{lem:chern-character}
Let $P_0\to B$ be a principal $T^2=S^1\times S^1$-bundle with Chern
vector $(c_1^{(1)},c_1^{(2)})\in H^2(B;\Z)^2$. For the character
\begin{equation*}\label{eq:character-def}
\chi_{a,b}:T^2\to S^1,\qquad
\chi_{a,b}(\lambda_1,\lambda_2)=\lambda_1^a\lambda_2^b,
\end{equation*}
the associated $S^1$-bundle
$P:=P_0\times_{T^2,\chi_{a,b}}S^1$ satisfies
\begin{equation*}\label{eq:chern-character-formula}
c_1(P)=a\,c_1^{(1)}+b\,c_1^{(2)}\in H^2(B;\Z).
\end{equation*}
\end{lemma}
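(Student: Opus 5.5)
The plan is to reduce the statement to the case of a single circle, using functoriality of the Chern class under changes of structure group, and then to use additivity of $c_1$ for tensor products of line bundles. First, the principal $T^2$-bundle $P_0$ determines two principal $S^1$-bundles $P^{(k)}:=P_0\times_{T^2,\mathrm{pr}_k}S^1$, $k=1,2$. The Chern vector is, by definition, $c_1^{(k)}=c_1(P^{(k)})$. Equivalently, $P_0\simeq P^{(1)}\times_B P^{(2)}$ as a fibre product with the product action. Under \Cref{lem:S1-associated-line}, $P^{(k)}$ corresponds to the line bundle $L_k:=P^{(k)}\times_{S^1}\C$, and $c_1^{(k)}=c_1(L_k)$.

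Next I will identify the associated line bundle of $P$. The line bundle $L_P=P\times_{S^1}\C=P_0\times_{T^2,\chi_{a,b}}\C$ is associated with the one-dimensional representation $\chi_{a,b}=\mathrm{pr}_1^{\,a}\cdot\mathrm{pr}_2^{\,b}$ of $T^2$. Tensor products and duals of representations correspond to tensor products and duals of associated bundles. Hence
\[
L_P\simeq L_1^{\otimes a}\otimes L_2^{\otimes b},
\]
where negative exponents mean powers of the dual. The cleanest way to see this is at the level of cocycles. Choose a trivialising cover of $B$ for $P_0$ with $T^2$-valued transition functions $(g^{(1)}_{\alpha\beta},g^{(2)}_{\alpha\beta})$. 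Then $P^{(k)}$ has cocycle $g^{(k)}_{\alpha\beta}$, and $P$ has cocycle $(g^{(1)}_{\alpha\beta})^a(g^{(2)}_{\alpha\beta})^b$, which is also the cocycle of $L_1^{\otimes a}\otimes L_2^{\otimes b}$.

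Finally, $c_1$ is a homomorphism $H^1(B;\underline{S^1})\to H^2(B;\Z)$. It is the connecting map of the exponential sequence, with $\underline{S^1}$ the sheaf of continuous $S^1$-valued functions and $B$ paracompact. It therefore converts products of cocycles into sums, which gives $c_1(P)=a\,c_1^{(1)}+b\,c_1^{(2)}$. Alternatively, one can use the splitting principle identities $c_1(L\otimes L')=c_1(L)+c_1(L')$ and $c_1(L^\vee)=-c_1(L)$.

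The only point that needs care is the first step: fixing the convention that the Chern vector is the pair of first Chern classes of the bundles obtained by pushing out along the two coordinate projections. The rest is formal. I would also note that the sign convention for $S^1$-bundles versus line bundles (the $\pm$ in \Cref{prop:KN-degree-fibre}) is applied uniformly to all three bundles, so it does not affect the linear formula.
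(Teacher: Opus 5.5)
Your proposal is correct and is essentially the paper's argument. The paper also passes to $S^1$-valued transition cocycles, notes that $P$ has cocycle $(g^{(1)}_{\alpha\beta})^a(g^{(2)}_{\alpha\beta})^b$, and concludes by additivity of the connecting homomorphism of the exponential sequence $0\to\Z\to\R\to S^1\to 0$; your detour through $L_P\simeq L_1^{\otimes a}\otimes L_2^{\otimes b}$ and the explicit convention $c_1^{(k)}=c_1\bigl(P_0\times_{T^2,\mathrm{pr}_k}S^1\bigr)$ only repackages that same cocycle computation.
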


\begin{proof}
The associated $S^1$-bundle $P$ is obtained from $P_0$ by changing
structure group along the character $\chi_{a,b}$. Its transition
cocycles are obtained from those of $P_0$ by applying $\chi_{a,b}$,
which on the level of $U(1)$-valued cocycles gives
$(g_{\alpha\beta}^{(1)},g_{\alpha\beta}^{(2)})\mapsto
(g_{\alpha\beta}^{(1)})^a(g_{\alpha\beta}^{(2)})^b$.
By the additivity of the connecting homomorphism in the exponential
sequence $$0\to\Z\to\R\to S^1\to 0,$$ the first Chern class of $P$
is $a\,c_1^{(1)}+b\,c_1^{(2)}$; see \cite[Ch.~4]{Husemoller} or
\cite[\S14]{MilnorStasheff}.
\end{proof}

\begin{remark}
\label{rem:S3-vs-RP3}
It is important to distinguish two different geometric objects.

\smallskip
\noindent\emph{The $4$-manifold neck.}
The connected sum $X_1\#X_2$ is formed by removing a small open
$4$-ball near each $x_i$ and gluing the resulting boundaries
$S^3\simeq \partial B_i$. Thus the neck in the underlying
$4$-manifold is a genuine $3$-sphere.

\smallskip
\noindent\emph{The auxiliary \texorpdfstring{$\RP^3$}{RP3} in the local topology of the twistor degeneration.}
Over a ruling fibre $F\simeq S^2$, the fixed-phase circle bundle associated with $Y_1$ $Q^{\log}|_\theta|_F\to F$ is a Hopf bundle, hence has total
space $S^3$. Combining it with a second Hopf bundle over the same base
and quotienting by the anti-diagonal circle produces, by
\Cref{prop:antidiag-quotient-RP3}, a principal $S^1$-bundle
over $F$ with Chern class $2$, hence total space $\RP^3$.

This $\RP^3$ is an auxiliary topological object naturally attached to
the local twistor neck over the ruling fibre. It should \emph{not} be
identified with the $S^3$ along which the connected sum of the base
$4$-manifolds is performed.

\smallskip
\noindent
\Cref{prop:real-structure-Q} gives an independent algebraic copy of $\RP^3$ as the real locus inside $\PP H^0(Q,\OO_Q(1,1))$. No canonical identification between these two a priori different $\RP^3$-manifolds is asserted.
\end{remark}

\begin{proposition}\label{prop:antidiag-quotient-RP3}
Let $F\simeq S^2$ be a ruling fibre of $Q\to\ell$, and orient the circle fibres so that $S^1_{\mathrm{KN}}:=(Q^{\log}|_\theta)|_F\to F$ has $c_1=1$. Form
the principal $T^2$-bundle
\begin{equation*}\label{eq:T2-bundle}
P_0:=S^1_{\mathrm{KN}}\times_F S^3\longrightarrow F,
\end{equation*}
where $S^3\to F$ is the Hopf fibration (also of $c_1=1$), so the
Chern vector of $P_0$ is $(1,1)$. Let
$AS^1:=\{(\lambda,\lambda^{-1})\mid\lambda\in S^1\}\subset T^2$ be
the anti-diagonal subgroup, and set $P:=P_0/AS^1$. Then $P\to F$ is
a principal $S^1$-bundle with
$
c_1(P)=2,
$
and its total space is diffeomorphic to $L(2,1)\simeq\RP^3$.
\end{proposition}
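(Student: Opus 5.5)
The plan is to compute $c_1(P)$ by reducing the quotient construction to a change of structure group along a character, and then to identify the total space through the classification of circle bundles over $S^2$.

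\emph{Step 1: $P_0$ is a principal $T^2$-bundle.} The fibre product $P_0=S^1_{\mathrm{KN}}\times_F S^3$ of two principal $S^1$-bundles over $F$ is a principal $T^2$-bundle, with $T^2$ acting factorwise. Its transition cocycle is the pair $(g^{(1)}_{\alpha\beta},g^{(2)}_{\alpha\beta})$ of the two circle cocycles. By the chosen orientation and \Cref{prop:KN-degree-fibre}, the Chern vector is $(1,1)$.

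\emph{Step 2: the quotient is an associated bundle.} The character $\chi_{1,1}(\lambda_1,\lambda_2)=\lambda_1\lambda_2$ is surjective onto $S^1$ with kernel exactly $AS^1$, so it induces an isomorphism $T^2/AS^1\cong S^1$. Since $T^2$ is abelian, $AS^1$ acts freely and compatibly with the residual action. Hence $P=P_0/AS^1$ is a principal $T^2/AS^1\cong S^1$-bundle.

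The map $P_0\times_{T^2,\chi_{1,1}}S^1\to P_0/AS^1$, sending $[p,\mu]$ to $[p\cdot(\mu,1)]$, is well defined and is an $S^1$-equivariant bundle isomorphism. I will check this directly on cocycles: both sides have transition functions $g^{(1)}g^{(2)}$. Applying \Cref{lem:chern-character} with $(a,b)=(1,1)$ then gives $c_1(P)=1+1=2$.

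\emph{Step 3: the total space.} Principal circle bundles over $S^2$ are classified by $c_1\in\Z$. The bundle of Chern class $n\neq0$ has total space the lens space $L(n,1)$, realised as $S^3/\Z_n$, the quotient of the Hopf bundle by $\Z_n\subset S^1$. For $n=2$ this gives $S^3/\{\pm1\}=\RP^3=L(2,1)$.

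To make this concrete rather than a citation, I will observe that $S^3/\Z_2\to S^2$ has transition cocycle $g^2$ for Hopf cocycle $g$. Therefore its Chern class is $2$ by \Cref{lem:chern-character} with a single factor and $a=2$, and it is isomorphic to $P$ by the classification.

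\emph{Main obstacle.} The main obstacle is orientation bookkeeping. \Cref{prop:KN-degree-fibre} only gives $|c_1|=1$, so the sign must be fixed by the stated orientation convention. Otherwise one could get Chern vector $(-1,1)$, and the anti-diagonal quotient would then give $c_1=0$ and total space $S^2\times S^1$. I will make explicit that the hypothesis orients $S^1_{\mathrm{KN}}$ so that its class matches that of the Hopf fibration. The remaining points (freeness of the action, local triviality) are routine checks in local trivialisations.
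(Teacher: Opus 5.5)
Your proposal is correct and follows essentially the same route as the paper: you identify $P_0/AS^1$ with the bundle associated to the character $(\lambda_1,\lambda_2)\mapsto\lambda_1\lambda_2$, whose kernel is $AS^1$, apply \Cref{lem:chern-character} with $(a,b)=(1,1)$ to get $c_1(P)=2$, and conclude by the classification of circle bundles over $S^2$. Your explicit isomorphism $[p,\mu]\mapsto[p\cdot(\mu,1)]$, the cocycle check that $S^3/\Z_2$ has Chern class $2$, and your remark that the sign convention matters (Chern vector $(-1,1)$ would give $S^2\times S^1$) spell out what the paper leaves to its citation of Husemoller.
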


\begin{proof}
The quotient map $q:T^2\to S^1$, $q(\lambda_1,\lambda_2)=\lambda_1\lambda_2$,
has kernel $AS^1$, so $P=P_0\times_{T^2,q}S^1$. This is the
character $(a,b)=(1,1)$, so \Cref{lem:chern-character} gives
$c_1(P)=1+1=2$. A principal $S^1$-bundle over $S^2$ with $c_1=2$
has total space $L(2,1)\simeq\RP^3$; see \cite[Ch.~4]{Husemoller}.
\end{proof}

\begin{lemma}
\label{prop:strong-def-retract-fixed-theta-rewrite}
Let $V\subset Q$ be the base of a semistable chart adapted to $Y_1$,
fix $\theta\in S^1$, and let $0<r\ll 1$. In the local semistable
model $t=uv$, the family of equal-modulus necks
\begin{equation*}\label{eq:Wrtheta-def}
N_{V,\rho,\theta}
:=\bigl\{(u,v,q)\in\mathcal V_{\rho e^{i\theta}}
\mid |u|=|v|=\sqrt{\rho}\bigr\},
\qquad 0<\rho\le r,
\end{equation*}
fits together with the local fixed-phase circle $P_{V,\theta}\to V$
at $\rho=0$ into a topological principal $S^1$-bundle over
$[0,r]\times V$. In particular, each neck $N_{V,\rho,\theta}$ is
canonically homeomorphic to the local fixed-phase circle $P_{V,\theta}$,
and this homeomorphism is compatible with the $S^1$-action and varies
continuously in $\rho$.
In particular, in the local model, the space
\begin{equation*}\label{eq:Wrtheta-def2}
W_{V,r,\theta}:=\bigsqcup_{0\le\rho\le r}N_{V,\rho,\theta}
\end{equation*}
deformation-retracts onto $P_{V,\theta}$ via the map
$(\rho,q,\eta)\mapsto(0,q,\eta)$, where $\eta:=v/|v|\in S^1$ is the
phase coordinate of \eqref{eq:identify-rewrite}.
\end{lemma}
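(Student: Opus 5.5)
The plan is to write down an explicit parametrisation of $W_{V,r,\theta}$ in the local semistable chart and check that it is a product $[0,r]\times P_{V,\theta}$ over $[0,r]\times V$. In the chart, a point of $\mathcal V_{\rho e^{i\theta}}$ with $|u|=|v|=\sqrt\rho$ is determined by $q\in V$ (the coordinates $z_3,\dots,z_n$) and the phase $\eta=v/|v|$. The other phase is then forced by the constraint $\rho_1\rho_2=e^{i\theta}$. Concretely, I would define
\[
\Phi:[0,r]\times V\times S^1\longrightarrow W_{V,r,\theta},\qquad
\Phi(\rho,q,\eta)=\bigl(\sqrt\rho\,e^{i\theta}\eta^{-1},\ \sqrt\rho\,\eta,\ q\bigr)\quad(\rho>0).
\]
At $\rho=0$, $\Phi$ sends $(q,\eta)$ to the point of $P_{V,\theta}$ over $q$ with phase pair $(e^{i\theta}\eta^{-1},\eta)$, as in \Cref{lem:KN-neck-rewrite}\ref{item:KN-neck-1}. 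This is well defined because $uv=\rho e^{i\theta}$. It is bijective onto each $N_{V,\rho,\theta}$ by \Cref{lem:KN-neck-rewrite}\ref{item:KN-neck-2}, and on each slice it is the inverse of the identification \eqref{eq:identify-rewrite}.

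The second step is to check that $\Phi$ is equivariant and continuous. Equivariance for the action \eqref{eq:S1-action-KN}, written on $\eta$ as $\eta\mapsto a^{-1}\eta$ (equivalently $\rho_1\mapsto a\rho_1$), is immediate from the formula. So $\Phi$ is fibrewise an $S^1$-equivariant bijection over $[0,r]\times V$. For continuity, including across $\rho=0$, I will use the Kato--Nakayama topology: $W_{V,r,\theta}$ is topologised as a subspace of $\mathcal V^{\log}$, which by the local chart description after \Cref{def:KN} is the real oriented blow-up with coordinates $(r_1,\theta_1,r_2,\theta_2,z_3,\dots)$. In these coordinates $\Phi$ reads
\[
(\rho,q,\eta)\mapsto\bigl(\sqrt\rho,\ e^{i\theta}\eta^{-1},\ \sqrt\rho,\ \eta,\ q\bigr),
\]
which is visibly continuous with continuous inverse $(r_1,\theta_1,r_2,\theta_2,q)\mapsto(r_1r_2,q,\theta_2)$. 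Since $[0,r]\times V\times S^1$ with the $S^1$ action is a trivial principal bundle and $\Phi$ is an equivariant homeomorphism covering the identity of $[0,r]\times V$, we get a topological principal $S^1$-bundle over $[0,r]\times V$ whose restriction to $\rho=0$ is $P_{V,\theta}$. The canonical homeomorphisms $N_{V,\rho,\theta}\cong P_{V,\theta}$ are then $\Phi(\rho,\cdot)\circ\Phi(0,\cdot)^{-1}$, and they vary continuously in $\rho$.

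Finally, the deformation retraction is $H_s(\Phi(\rho,q,\eta))=\Phi((1-s)\rho,q,\eta)$ for $s\in[0,1]$. This is continuous, and at $s=1$ it is the map $(\rho,q,\eta)\mapsto(0,q,\eta)$. It fixes $P_{V,\theta}$ pointwise, so it is even a strong deformation retraction.

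The main obstacle is the continuity at $\rho=0$. In the ordinary topology of $\mathcal V$ the necks collapse onto $V$, so the argument genuinely needs the log topology, and hence the identification of the chart of $\mathcal V^{\log}$ over the point $(0,\theta)\in\Delta^{\log}$ with phase pairs. I will handle this by citing the local chart description after \Cref{def:KN}, applied to the divisor $\{uv=0\}$ in the smooth total space.
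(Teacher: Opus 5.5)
Your proposal is correct and is essentially the paper's proof: the same global trivialisation by the phase $\eta=v/|v|$ via $u=\sqrt\rho\,e^{i\theta}\eta^{-1}$, $v=\sqrt\rho\,\eta$, matched at $\rho=0$ with the phase pair $(e^{i\theta}\eta^{-1},\eta)\in P_{V,\theta}$, and the same retraction $(\rho,q,\eta)\mapsto((1-\lambda)\rho,q,\eta)$. Your continuity check across $\rho=0$, done in the real oriented blow-up chart of $\mathcal V^{\log}$, makes explicit a point that the paper asserts without further comment.
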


\begin{proof}
The phase coordinate $\eta:=v/|v|\in S^1$ trivialises the family
globally over $[0,r]\times V$. Explicitly, each point of
$N_{V,\rho,\theta}$ for $\rho>0$ is parametrised as
\[
u=\sqrt{\rho}\,e^{i\theta}\eta^{-1},\quad
v=\sqrt{\rho}\,\eta,\quad q\in V,
\]
which satisfies $uv=\rho e^{i\theta}$ and $|u|=|v|=\sqrt{\rho}$.
At $\rho=0$ the same parameter $\eta$ determines the local phase pair
\[
\bigl(e^{i\theta}\eta^{-1},\,\eta\bigr)
\in
\bigl\{(\rho_1,\rho_2)\in S^1\times S^1\mid
\rho_1\rho_2=e^{i\theta}\bigr\}=P_{V,\theta}.
\]
This gives a continuous family over $[0,r]\times V$, and the
$S^1$-action on the $\eta$-coordinate defines the principal bundle
structure. The deformation retraction
\[
H(\lambda;\rho,q,\eta)
:=\bigl((1-\lambda)\rho,\,q,\,\eta\bigr),
\qquad\lambda\in[0,1],
\]
is continuous, $S^1$-equivariant, satisfies $H(0,-)=\mathrm{id}$
and $H(1,-)$ collapses $W_{V,r,\theta}$ onto $P_{V,\theta}$, and
fixes $P_{V,\theta}$ pointwise.
\end{proof}

\Cref{prop:strong-def-retract-fixed-theta-rewrite} is a local
statement, valid in each semistable chart adapted to $Y_1$. The
individual local retractions are compatible on overlaps because the
circle coordinate $\eta=v/|v|$ transforms by
$\eta\mapsto(a/|a|)\eta$ under $v\mapsto av$ with $a\in\OO_Q^\times$,
consistently with the cocycle of $S(\mathcal N_{Q/Y_1})$ identified in
\Cref{lem:KN-is-normal-circle}. Hence the local retractions glue
to a deformation retraction of the global neck family onto
$Q^{\log}|_\theta=S(\mathcal N_{Q/Y_1})$.

Thus the fixed-phase circle describes the limiting topology of the neck as $\rho\to0$. No strong deformation retraction of the full Kato--Nakayama space $\mathcal Z^{\log}$ is asserted; such a statement would require a separate analysis of the log structure on $\mathcal Z$.

\begin{theorem}\label{thm:DF-diagram-KN-Clemens}
Fix a phase $\theta\in S^1$ and consider the circle bundle associated with the chosen branch $Y_1$,
\[
Q^{\log}\big|_\theta:=S(\mathcal N_{Q/Y_1})\longrightarrow Q.
\]
Let $F\simeq \PP^1$ be a ruling fibre of $Q\to\ell$.
\begin{enumerate}
\item
The restriction
\[
\left(Q^{\log}\big|_\theta\right)\big|_F \longrightarrow F
\]
is the Hopf circle bundle up to orientation. In particular, its total
space is diffeomorphic to $S^3$.

\item
Let
\[
P_0:=\left(\left(Q^{\log}\big|_\theta\right)\big|_F\right)\times_F S^3
\longrightarrow F,
\]
where $S^3\to F$ is the Hopf fibration. If
\[
AS^1:=\{(\lambda,\lambda^{-1})\mid \lambda\in S^1\}\subset T^2,
\]
then the quotient
$
P:=P_0/AS^1
$
is a principal $S^1$-bundle over $F$ with first Chern class $2$.
Hence its total space is diffeomorphic to
$
L(2,1)\simeq \RP^3.$

\item
The anti-diagonal quotient therefore gives, over each ruling fibre $F$, an auxiliary $3$-manifold diffeomorphic to $\RP^3$. This $\RP^3$ is not the $S^3$
along which the connected sum of the underlying $4$-manifolds is
performed.
\end{enumerate}
\end{theorem}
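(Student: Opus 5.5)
The theorem is essentially an assembly of results already established, so the plan is to prove the three items in order by citing them and checking that the conventions match.

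For item (1), I would invoke \Cref{lem:KN-is-normal-circle} to identify $Q^{\log}|_\theta$ with $S(\mathcal N_{Q/Y_1})$. Then \Cref{lem:normal-exceptional} gives $\mathcal N_{Q/Y_1}\simeq\OO_Q(-1)$, and \Cref{lem:O1-restrict-fibre} restricts this to $\OO_{\PP^1}(-1)$ on a ruling fibre $F$ of $g$. \Cref{lem:S1-associated-line} then shows that the restricted circle bundle has $c_1=-1$ in $H^2(F;\Z)$ with the complex orientation. Hence it is the Hopf bundle, or its conjugate if the fibre orientation is reversed, and its total space is $S^3$. This is exactly \Cref{prop:KN-degree-fibre}, so item (1) can be cited directly.

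For item (2), I would first fix the orientation of the circle fibres so that $c_1\bigl((Q^{\log}|_\theta)|_F\bigr)=+1$. This is permitted because reversing the $S^1$-action (precomposing with inversion) negates $c_1$ without changing the total space. I would also orient the Hopf fibration $S^3\to F$ so that it too has $c_1=1$. The fibre product $P_0$ over $F$ is then a principal $T^2$-bundle with Chern vector $(1,1)$, since its transition cocycle is the pair of the two cocycles. The quotient by $AS^1=\ker\bigl((\lambda_1,\lambda_2)\mapsto\lambda_1\lambda_2\bigr)$ is the bundle associated with the character $\chi_{1,1}$. \Cref{lem:chern-character} gives $c_1(P)=2$, and the classification of circle bundles over $S^2$ by the Euler class identifies the total space with $S^3/\{\pm1\}=L(2,1)\simeq\RP^3$. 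This is precisely \Cref{prop:antidiag-quotient-RP3}.

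The step I expect to be the only real obstacle is the orientation bookkeeping in (2). If one instead kept the natural complex orientation, giving the Chern vector $(-1,1)$, the anti-diagonal quotient would give $c_1=0$ and total space $S^2\times S^1$. The statement must therefore be read with the orientation normalisation of \Cref{prop:antidiag-quotient-RP3}, and I would state this explicitly.

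Item (3) is not a mathematical assertion beyond (2). For the comparison, I would recall from \Cref{rem:S3-vs-RP3} that the connected-sum neck in $X_1\#X_2$ is $\partial B^4\simeq S^3$. Since $\pi_1(\RP^3)=\Z/2\neq0=\pi_1(S^3)$, the two $3$-manifolds are not homeomorphic, so the auxiliary $\RP^3$ cannot be that neck.
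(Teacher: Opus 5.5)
Your proposal is correct and follows the paper's proof: (1) from \Cref{prop:KN-degree-fibre}, (2) from \Cref{prop:antidiag-quotient-RP3} via the character $\chi_{1,1}$ and \Cref{lem:chern-character}, and (3) from \Cref{rem:S3-vs-RP3}. Your two additions are sound and make explicit what the paper leaves implicit: the orientation caveat (with opposite signs the anti-diagonal quotient has $c_1=0$ and is $S^2\times S^1$, so the normalisation of \Cref{prop:antidiag-quotient-RP3} is needed), and the $\pi_1$ argument showing that this $\RP^3$ cannot be the $S^3$ neck.
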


\begin{proof}
Part (1) is \Cref{prop:KN-degree-fibre}. Part (2) is
\Cref{prop:antidiag-quotient-RP3}. Part (3) is the
interpretation recorded in \Cref{rem:S3-vs-RP3}.
Part (3) depends only on the normal circle bundle $Q^{\log}|_\theta$ and the Hopf fibration over $F$. The anti-diagonal quotient has first Chern class $2$, since the two factors have Chern classes $(1,1)$.
\end{proof}

%======================================================================
\subsection{The fixed-phase circle bundle over curves in \texorpdfstring{$Q$}{Q}}

Let $S_K\subset(\mathcal Z_R)_K$ be an integral surface whose closure
$\overline S$ is in strong proper position, and set
\[
S_i:=\overline S\cap\widetilde Z_i,\qquad c:=\overline S\cap Q.
\]
By \Cref{cor:sp-by-components}, the two traces agree on $Q$:
\[
j_1^![S_1]=[c]=j_2^![S_2].
\]
The class $[c]\in\CH_1(Q)$ determines the algebraic trace. The restriction of the fixed-phase Kato--Nakayama circle bundle $Q^{\log}|_\theta\to Q$ to $c$ contains the corresponding angular information.

\begin{proposition}\label{prop:KN-circle-on-c}
Let $c\subset Q$ be an irreducible reduced curve, and let
\[
Q^{\log}\big|_\theta\to Q
\]
be the fixed-phase circle bundle associated with $\widetilde Z_1$ and defined in
\Cref{lem:KN-is-normal-circle}. Set
\begin{equation*}\label{eq:Ptheta-def}
P_\theta:=\left(Q^{\log}\big|_\theta\right)\big|_c\longrightarrow c.
\end{equation*}
Then the associated complex line bundle satisfies
\begin{equation}\label{eq:Ltheta-normal}
L_\theta\simeq \mathcal N_{Q/\widetilde Z_1}\big|_c,
\end{equation}
and
\begin{equation*}\label{eq:c1-Ptheta}
c_1(P_\theta)=c_1(L_\theta)\in H^2(c;\Z).
\end{equation*}
\end{proposition}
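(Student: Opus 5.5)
The plan is to obtain both assertions by restricting the global identification of \Cref{lem:KN-is-normal-circle} to $c$, and then applying \Cref{lem:S1-associated-line} over $c$. The first step is to note that \Cref{lem:KN-is-normal-circle} gives an $S^1$-equivariant isomorphism, canonical up to isotopy,
\[
Q^{\log}\big|_\theta\simeq S(\mathcal N_{Q/\widetilde Z_1})
\]
of principal $S^1$-bundles over $Q$. Restriction of principal bundles along the inclusion $\iota_c:c\hookrightarrow Q$ is functorial and commutes with passage to unit circle bundles, since a Hermitian metric on $\mathcal N_{Q/\widetilde Z_1}$ restricts to one on $\mathcal N_{Q/\widetilde Z_1}|_c$. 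Therefore
\[
P_\theta=\iota_c^*\bigl(Q^{\log}\big|_\theta\bigr)\simeq S\bigl(\mathcal N_{Q/\widetilde Z_1}\big|_c\bigr).
\]
This step needs only that $c$ is a closed subspace of $Q$. The curve $c$ is compact, and a compact analytic curve is triangulable, hence paracompact, so \Cref{lem:S1-associated-line} applies over $c$.

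The second step identifies $L_\theta:=P_\theta\times_{S^1}\C$. For a Hermitian line bundle $L$, the natural map $S(L)\times_{S^1}\C\to L$, $[x,\lambda]\mapsto\lambda x$, is an isomorphism of complex line bundles. Concretely, on the cover $\{V_\alpha\cap c\}$ used in the proof of \Cref{lem:KN-is-normal-circle}, $P_\theta$ is glued by the cocycle $a_{\alpha\beta}/|a_{\alpha\beta}|$, where $a_{\alpha\beta}\in\OO_Q^\times$ are the transition functions of $\mathcal N_{Q/\widetilde Z_1}$, restricted to $c$. The associated line bundle $L_\theta$ is glued by this same unitary cocycle. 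Since this cocycle is cohomologous, as a $\C^\times$-valued continuous cocycle, to $(a_{\alpha\beta}|_c)$ via the positive functions $|a_{\alpha\beta}|$, we get $L_\theta\simeq\mathcal N_{Q/\widetilde Z_1}|_c$ as topological complex line bundles, which is \eqref{eq:Ltheta-normal}. The equality $c_1(P_\theta)=c_1(L_\theta)$ is then precisely \Cref{lem:S1-associated-line} applied with $B=c$.

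The main obstacle is deciding in which category \eqref{eq:Ltheta-normal} is meant. The isomorphism holds topologically (even smoothly), but $L_\theta$ is built from a principal $S^1$-bundle and carries no canonical holomorphic structure. I would state the isomorphism as one of complex line bundles up to isomorphism, the holomorphic structure being inherited from the right-hand side. I would also note the consequence that, via the normalisation $\tilde c\to c$ and \Cref{lem:normal-exceptional}, $\deg c_1(P_\theta)=-\xi\cdot[c]$; this is worth recording but is not needed for the statement as given.
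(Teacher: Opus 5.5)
Your proposal is correct and is essentially the paper's proof: you restrict the identification $Q^{\log}|_\theta\simeq S(\mathcal N_{Q/\widetilde Z_1})$ from \Cref{lem:KN-is-normal-circle} to $c$, and then apply \Cref{lem:S1-associated-line} with $B=c$. Your extra cocycle check and your remark that \eqref{eq:Ltheta-normal} is an isomorphism of topological, not holomorphic, complex line bundles only make explicit what the paper leaves implicit; one small correction is that the cochain showing $(a_{\alpha\beta}/|a_{\alpha\beta}|)$ and $(a_{\alpha\beta})$ are cohomologous is given by the norms $r_\alpha$ of local frames (whose ratios are the $|a_{\alpha\beta}|$), not by the $|a_{\alpha\beta}|$ themselves.
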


\begin{proof}
By \Cref{lem:KN-is-normal-circle}, $Q^{\log}|_\theta$ is the
unit circle bundle of $\mathcal N_{Q/\widetilde Z_1}\simeq\OO_Q(-1)$
(\Cref{lem:normal-exceptional}). Restricting to $c$ gives
\eqref{eq:Ltheta-normal}, and equality of first Chern classes for a
principal $S^1$-bundle and its associated line bundle is
\Cref{lem:S1-associated-line}.
\end{proof}

The Chow class $[c]$
tells us \emph{where} the limiting surface meets $Q$, but it forgets
the phase. In a local semistable chart $uv=t$ near a point of $Q$,
as $t\to 0$ along a ray $t=|t|e^{i\theta}$, each point of the trace
acquires a compatible phase pair $(\rho_1,\rho_2)$ with
$\rho_1\rho_2=e^{i\theta}$. This information is represented by the restricted principal $S^1$-bundle $P_\theta\to c$ and is not detected by the Chow class.
The characteristic class of this bundle is
$c_1(\OO_Q(-1)|_c)$, which depends on the embedding of $c$ in $Q$.
If $c$ has bidegree $(a,b)$ on $Q\simeq \PP^1\times\PP^1$, then
\[
\deg \OO_Q(-1)|_c = -(a+b),
\]
so this class is nonzero for every nontrivial effective curve
$c\subset Q$.

%======================================================================
\subsection{Twistor geometry of \texorpdfstring{$Q$}{Q}
and its real structure}
%\label{subsec:twistor-Q}

The exceptional quadric $Q\simeq\PP^1\times\PP^1$ carries a natural real structure induced by the quaternionic structures of the twistor spaces $Z_i$. This subsection describes the induced real structure on $|\OO_Q(1,1)|$ and exhibits a concrete $\tau$-invariant real pencil with two non-real base points.

Let $\OO_Q(1,1)$ denote the Segre polarisation on
$Q\simeq\PP^1\times\PP^1$.

\begin{notation}
In this subsection $\tau:Q\to Q$ denotes the antiholomorphic involution of $Q$; the symbol $\sigma:Q_1\xrightarrow{\sim}Q_2$ continues to denote the biholomorphism exchanging the two rulings.
\end{notation}

\begin{proposition}\label{prop:real-structure-Q}
Let $Q\simeq\PP^1\times\PP^1$, and consider the antiholomorphic
involution
\begin{equation}\label{eq:tau-def}
\tau:\PP^1\times\PP^1\longrightarrow\PP^1\times\PP^1,\qquad
\bigl([z_0:z_1],[w_0:w_1]\bigr)\longmapsto
\bigl([-\overline z_1:\overline z_0],
[-\overline w_1:\overline w_0]\bigr).
\end{equation}
Then $\tau$ has empty real locus $Q^\tau=\varnothing$.
The induced antilinear involution on
$
H^0\!\bigl(Q,\OO_Q(1,1)\bigr)
$
has fixed real subspace of real dimension $4$, hence the fixed locus
in the projective space is naturally identified with
\begin{equation*}\label{eq:RP3-fixed}
\PP H^0\!\bigl(Q,\OO_Q(1,1)\bigr)^{\tau}\;\cong\;\RP^3.
\end{equation*}
This $\RP^3$ parametrises the $\tau$-invariant divisors in the linear
system $|\OO_Q(1,1)|$.
In coordinates $[z_0:z_1]\times[w_0:w_1]$ on $Q$, the embedding of
$\RP^3$ in $\PP^3$ via the basis
$z_0w_0,\,z_0w_1,\,z_1w_0,\,z_1w_1$ of $H^0(Q,\OO_Q(1,1))$ is
\begin{equation}\label{eq:RP3-embedding}
[a_1:a_2:b_1:b_2]\in\RP^3
\longmapsto
\bigl[a_1+ia_2\,:\,b_1+ib_2\,:\,-b_1+ib_2\,:\,a_1-ia_2\bigr]
\in\PP^3.
\end{equation}
Moreover, the sections
\begin{equation}\label{eq:s1s2-def}
s_1:=z_0w_0+z_1w_1,\qquad s_2:=z_0w_1-z_1w_0
\end{equation}
pan a $\tau$-invariant real pencil.
As one explicit example, this pencil defines a $\tau$-equivariant
rational map
$
h:Q\dashrightarrow\PP^1
$
satisfying
$
h\circ\tau=\overline{h}.
$
\end{proposition}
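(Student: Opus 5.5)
The plan is to work entirely in bihomogeneous coordinates. I would lift $\tau$ to the antilinear map $\tilde\tau(z_0,z_1)=(-\overline z_1,\overline z_0)$ on each $\C^2$ factor. This lift satisfies $\tilde\tau^2=-\mathrm{id}$, so it is a quaternionic structure on each factor. Every assertion then reduces to an explicit coefficient computation.

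\emph{Empty real locus.} Suppose $\tau$ fixes a point. Then on the first factor there is $\lambda\in\C^\times$ with $-\overline z_1=\lambda z_0$ and $\overline z_0=\lambda z_1$. Substituting the second relation into the conjugate of the first gives $z_0=-|\lambda|^2z_0$. Arguing the same way with $z_1$ forces $z_0=z_1=0$, which is impossible. So already the first factor has no fixed points, and hence $Q^\tau=\varnothing$.

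\emph{Induced real structure on sections.} For a bihomogeneous form $s$ of bidegree $(1,1)$, I would set $(\tau s)(z,w):=\overline{s(\tilde\tau z,\tilde\tau w)}$. The form $s$ is linear in each factor, and $\tilde\tau^2=-1$ on each factor, so the two signs cancel and $\tau^2=\mathrm{id}$. This is the point that needs care: the genuine real structure exists on $\OO(1,1)$ even though it does not exist on $\OO(1,0)$. Next I would write $s=a_{00}z_0w_0+a_{01}z_0w_1+a_{10}z_1w_0+a_{11}z_1w_1$ and substitute term by term, which gives
\[
\tau s=\overline a_{11}z_0w_0-\overline a_{10}z_0w_1-\overline a_{01}z_1w_0+\overline a_{00}z_1w_1.
\]
The fixed sections are therefore exactly those with $a_{11}=\overline a_{00}$ and $a_{10}=-\overline a_{01}$. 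Writing $a_{00}=a_1+ia_2$ and $a_{01}=b_1+ib_2$ parametrises this fixed subspace by $(a_1,a_2,b_1,b_2)\in\R^4$, and gives exactly the formula \eqref{eq:RP3-embedding}. Projectivising, real scalars act on the fixed subspace while $\tau$ commutes with real scaling, so the fixed locus in $\PP H^0$ is $\RP^3$. A $\tau$-invariant divisor $\{s=0\}$ corresponds to $\tau s=\mu s$. Since $\tau^2=\mathrm{id}$ forces $|\mu|=1$, I would rescale $s$ by a square root of $\mu$ to make it fixed. This identifies the invariant divisors with the points of this $\RP^3$.

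\emph{The pencil and $h$.} The section $s_1$ has $a_{00}=a_{11}=1$, and $s_2$ has $a_{01}=1$ and $a_{10}=-1$; both satisfy the fixed-point conditions. Hence they span a $\tau$-invariant real pencil. Define $h=[s_1:s_2]$. The computation above, applied to a fixed section, reads $s_k(\tilde\tau z,\tilde\tau w)=\overline{s_k(z,w)}$. Taking the ratio gives $h\circ\tau=\overline h$ wherever $h$ is defined.

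The only genuine obstacle is to fix the convention for the induced action on $H^0$ so that it is an honest involution rather than an anti-involution. The sign bookkeeping in the substitution is routine but has to match \eqref{eq:RP3-embedding} exactly.
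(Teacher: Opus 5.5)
Your proposal is correct and is essentially the paper's argument: an explicit coordinate computation of the induced antilinear map on the basis $z_iw_j$, giving the fixed-point conditions $a_{11}=\overline{a_{00}}$ and $a_{10}=-\overline{a_{01}}$, followed by checking that $s_1,s_2$ are fixed (your $h=[s_1:s_2]$ versus the paper's $[s_1:-s_2]$ is immaterial). You are slightly more careful than the paper: you use the lift with $\tilde\tau^2=-\mathrm{id}$ to check that the induced map on sections is an involution, and you give the rescaling argument showing that invariant divisors correspond to real points, where the paper says only ``by construction''.
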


\begin{proof}
\emph{Step 1: the chosen real structure.}
Consider the antiholomorphic involution $\tau$ defined in
\eqref{eq:tau-def}. A point
$
\bigl([z_0:z_1],[w_0:w_1]\bigr)\in \PP^1\times\PP^1
$
is fixed by $\tau$ if and only if
\[
[z_0:z_1]=[-\overline z_1:\overline z_0],
\qquad
[w_0:w_1]=[-\overline w_1:\overline w_0].
\]
But the antiholomorphic involution
$
[z_0:z_1]\longmapsto [-\overline z_1:\overline z_0]
$
on $\PP^1$ has no fixed points, hence $\tau$ has no fixed points on
$Q$ and therefore $Q^\tau=\varnothing$.

\smallskip
\emph{Step 2: the induced involution on sections.}
Recall
\[
\OO_Q(1,1)=\pi_1^*\OO_{\PP^1}(1)\otimes\pi_2^*\OO_{\PP^1}(1).
\]
Define the antilinear involution on sections by
$
\widetilde\tau(s):=\tau^*(\overline s).
$
A direct computation on the basis gives
\begin{equation*}\label{eq:tau-on-basis}
\widetilde\tau(z_0w_0)=z_1w_1,\quad
\widetilde\tau(z_0w_1)=-z_1w_0,\quad
\widetilde\tau(z_1w_0)=-z_0w_1,\quad
\widetilde\tau(z_1w_1)=z_0w_0.
\end{equation*}
A section
\[
s=a\,z_0w_0+b\,z_0w_1+c\,z_1w_0+d\,z_1w_1
\]
satisfies $\widetilde\tau(s)=s$ if and only if
$
d=\overline a$ and $ c=-\overline b.
$
Hence the $\widetilde\tau$-invariant sections are precisely those of
the form
\begin{equation*}\label{eq:invariant-sections}
s=a\,z_0w_0+b\,z_0w_1
-\overline b\,z_1w_0+\overline a\,z_1w_1,
\qquad a,b\in\C.
\end{equation*}
Writing $a=a_1+ia_2$ and $b=b_1+ib_2$ with $a_j,b_j\in\R$, one gets a
real $4$-dimensional fixed subspace, hence the projectivised fixed
locus is $\RP^3$, with embedding \eqref{eq:RP3-embedding}. By
construction its points parametrise the $\tau$-invariant divisors in
$|\OO_Q(1,1)|$.

\smallskip
\emph{Step 3: a real pencil.}
The sections $s_1,s_2$ in \eqref{eq:s1s2-def} are
$\widetilde\tau$-invariant, so they span a real pencil. Define
\begin{equation*}\label{eq:h-def}
h:Q\dashrightarrow\PP^1,\qquad
h\bigl([z_0:z_1],[w_0:w_1]\bigr)
:=[s_1:-s_2]
=\bigl[z_0w_0+z_1w_1\,:\,-(z_0w_1-z_1w_0)\bigr].
\end{equation*}
Its base locus is $\{s_1=s_2=0\}$, namely
\begin{equation}\label{eq:base-locus}
\bigl([1:i],[1:i]\bigr),\qquad
\bigl([1:-i],[1:-i]\bigr),
\end{equation}
and $h\circ\tau=\overline h$ follows immediately from the
$\widetilde\tau$-invariance of $s_1$ and $s_2$.
\end{proof}

\Cref{prop:real-structure-Q} identifies, for the chosen
antiholomorphic involution $\tau$, a real form
\[
\RP^3\subset \PP H^0(Q,\OO_Q(1,1)),
\]
namely the projectivisation of the $\widetilde\tau$-fixed real vector
space. Its points parametrise the $\tau$-invariant divisors in the
linear system $|\OO_Q(1,1)|$.
The pencil $h$ is one explicit $\tau$-equivariant real pencil inside
this $\RP^3$. It is convenient, but neither canonical nor unique. Its
base locus consists of the two conjugate non-real points
\eqref{eq:base-locus}.

\section{Ward bundles and charge across the pushout}\label{sec:instantons-pushout}

Ward bundles on the two blown-up twistor spaces are glued across the double locus. The resulting bundle satisfies a componentwise obstruction criterion, and its second Chern cycle and polarised charge are additive. The Ward condition also implies analytic triviality near $Q$, so no additional local twisting occurs along the neck.

\subsection{Ward bundles and gluing}%\label{subsec:ward-setup}

The notation of Section~\ref{sec:CH-pushout} is retained: blown-up twistor spaces
$f_i:\widetilde Z_i\to Z_i$, exceptional quadrics
$Q_i\simeq\PP^1\times\PP^1$, isomorphism exchanging the two rulings
$\sigma:Q_1\xrightarrow{\sim}Q_2$, SNC pushout
\(Z_0:=\widetilde Z_1\cup_Q\widetilde Z_2\), \(Q\simeq Q_1\simeq Q_2\),
and semistable smoothing $\pi:\mathcal Z\to\Delta$ with smooth total
space and local model $t=uv$ along $Q$. Let $j_i:Q_i\hookrightarrow
\widetilde Z_i$, $\phi_i:\widetilde Z_i\hookrightarrow Z_0$, and
$\iota:Q\hookrightarrow Z_0$ be the canonical closed immersions.

Let $\mathcal E_i$ be \emph{Ward bundles} on $Z_i$: holomorphic
vector bundles of rank $r$ that restrict trivially to every real
twistor line \cite{Ward77,WardWells90}. Set
\begin{equation*}\label{eq:Fi-def}
F_i:=f_i^*\mathcal E_i\qquad\text{on }\widetilde Z_i.
\end{equation*}
Since $f_i|_{Q_i}$ factors through $\ell_i$ and $\mathcal
E_i|_{\ell_i}$ is trivial, we have
\begin{equation}\label{eq:Fi-trivial-on-Qi}
F_i|_{Q_i}\simeq \mathcal O_{Q_i}^{\oplus r}.
\end{equation}
Identify $Q:=Q_1$ and pull back via $\sigma$ to write
$F_2|_Q:=\sigma^*(F_2|_{Q_2})$.

\begin{definition}%\label{def:gluing-iso}
A \emph{$\sigma$-gluing isomorphism} for $(F_1,F_2)$ is an
isomorphism of vector bundles on $Q$
\begin{equation*}\label{eq:vartheta-def}
\vartheta:F_1|_Q\xrightarrow{\;\sim\;}F_2|_Q.
\end{equation*}
\end{definition}

Given a triple $(F_1,F_2,\vartheta)$, define a sheaf $\mathcal F$ on
$Z_0$ by the gluing exact sequence
\begin{equation}\label{eq:equalizer-F}
0\longrightarrow \mathcal F
\longrightarrow \phi_{1*}F_1\oplus\phi_{2*}F_2
\xrightarrow{\ \rho_\vartheta\ }
\iota_*(F_1|_Q)
\longrightarrow 0,
\end{equation}
where
\begin{equation*}\label{eq:rho-vartheta}
\rho_\vartheta(s_1,s_2)=s_1|_Q-\vartheta^{-1}(s_2|_Q).
\end{equation*}

\begin{lemma}
\label{lem:glued-bundle-correct}
The sheaf $\mathcal F$ defined by the exact sequence \eqref{eq:equalizer-F} is locally
free of rank $r$, with $\phi_i^*\mathcal F\simeq F_i$ for $i=1,2$
and $\mathcal F|_Q$ identified with $F_1|_Q$ and $F_2|_Q$ via
$\vartheta$.
\end{lemma}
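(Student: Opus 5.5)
The claim is local on $Z_0$, so I will verify it on a small open set around each point. Away from $Q$ the map $\iota_*(F_1|_Q)$ vanishes and exactly one of $\phi_{1*}F_1$, $\phi_{2*}F_2$ is nonzero, so $\mathcal F$ coincides with $F_1$ or $F_2$ there. It is therefore locally free of rank $r$ off the double locus. The work is at points $x\in Q$.

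Near such an $x$ I first trivialise the data. I choose an affine (or small analytic) neighbourhood $U$ of $x$ in $Z_0$ with branches $U_i=U\cap\widetilde Z_i$ and $U_Q=U\cap Q$. On $U_1$ I pick a frame $e^{(1)}$ of $F_1$. This gives a frame $e^{(1)}|_Q$ of $F_1|_Q$, and hence a frame $\vartheta(e^{(1)}|_Q)$ of $F_2|_Q$ on $U_Q$. After shrinking $U$, I lift this frame to a frame $e^{(2)}$ of $F_2$ on $U_2$: sections lift because $U_2$ is affine, and the lift is still a frame near $x$ by Nakayama. In the global setting, \eqref{eq:Fi-trivial-on-Qi} makes this especially transparent.

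With these frames, $\rho_\vartheta$ becomes the map $\mathcal O_{U_1}^{r}\oplus\mathcal O_{U_2}^{r}\to\mathcal O_{U_Q}^{r}$, $(f,g)\mapsto f|_Q-g|_Q$. This is $r$ copies of the standard Ferrand/Milnor square sequence
\[
0\to\mathcal O_{Z_0}\to\phi_{1*}\mathcal O_{\widetilde Z_1}\oplus\phi_{2*}\mathcal O_{\widetilde Z_2}\to\iota_*\mathcal O_Q\to0.
\]
Exactness of this sequence is the scheme-theoretic content of the pushout. Locally $Z_0=\{uv=0\}$, and the sequence says $\mathcal O/(uv)=\{(f,g)\in\mathcal O/(u)\times\mathcal O/(v): f\equiv g \bmod (u,v)\}$. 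I check this directly: $(u)\cap(v)=(uv)$ in the regular local ring, and the difference map is surjective. Hence $\mathcal F|_U\simeq\mathcal O_U^{\oplus r}$, the sequence \eqref{eq:equalizer-F} is exact on the right, and $\mathcal F$ is locally free of rank $r$.

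For the restrictions, I use the frame $E=(e^{(1)},e^{(2)})$ of $\mathcal F|_U$. Its image under $\phi_1^*$ is $e^{(1)}$, which gives a map $\phi_1^*\mathcal F\to F_1$; on $U$ it sends a frame to a frame, so it is an isomorphism. Globally, this map is the adjoint of the projection $\mathcal F\to\phi_{1*}F_1$, so the local isomorphisms glue. The same argument applies to $\phi_2$. Restricting further to $Q$, the two identifications $\mathcal F|_Q\simeq F_1|_Q$ and $\mathcal F|_Q\simeq F_2|_Q$ differ exactly by $\vartheta$, because the defining relation is $s_1|_Q=\vartheta^{-1}(s_2|_Q)$.

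I expect the only real obstacle to be the exactness of the structure-sheaf gluing sequence, equivalently the fact that $\mathcal O_{Z_0}$ is the fibre product $\mathcal O_{\widetilde Z_1}\times_{\mathcal O_Q}\mathcal O_{\widetilde Z_2}$. I will invoke Ferrand's theorem \cite{Fer03} for this: it states that the structure sheaf of a pushout along closed immersions is the fibre product. As a backup, I will verify it in the local model $uv=0$ as indicated above. The frame-lifting step needs $U_2$ small (affine or Stein), which is harmless.
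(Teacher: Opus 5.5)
Your proposal is correct and follows essentially the same route as the paper: work locally near $Q$, change the frame of $F_2$ by a lift of the gluing data so that $\vartheta=\mathrm{Id}$, and then identify $\mathcal F$ with $r$ copies of $\mathcal O_{\widetilde Z_1}\times_{\mathcal O_Q}\mathcal O_{\widetilde Z_2}\cong\mathcal O_{Z_0}$. Your extra checks, namely the verification $(u)\cap(v)=(uv)$ in the local model and the adjunction argument giving a global map $\phi_i^*\mathcal F\to F_i$, spell out what the paper leaves to the Ferrand structure and to ``the construction''.
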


 \begin{proof}
The claim is local on $Z_0$. Away from the double locus $Q$, the pushout
$Z_0=\widetilde Z_1\cup_Q \widetilde Z_2$ coincides with one of the two
smooth branches, so there is nothing to prove. It is therefore enough
to work near a point of $Q$.
Since $Z_0$ is the Ferrand pushout of
\[
j_1:Q\hookrightarrow \widetilde Z_1,
\qquad
j_2:Q\hookrightarrow \widetilde Z_2,
\]
its structure sheaf is locally
\[
\mathcal O_{Z_0}
\simeq
\mathcal O_{\widetilde Z_1}\times_{\mathcal O_Q}\mathcal O_{\widetilde Z_2}.
\]
Choose local trivialisations
\[
F_1\simeq \mathcal O_{\widetilde Z_1}^{\oplus r},
\qquad
F_2\simeq \mathcal O_{\widetilde Z_2}^{\oplus r}.
\]
In these coordinates, the gluing isomorphism
\[
\vartheta:F_1|_Q\xrightarrow{\sim}F_2|_Q
\]
is represented by a matrix $g\in GL_r(\mathcal O_Q)$.
Because the restriction map
$
\mathcal O_{\widetilde Z_2}\twoheadrightarrow \mathcal O_Q
$
is surjective, after shrinking we may lift $g$ to a matrix
$
\widetilde g\in M_r(\mathcal O_{\widetilde Z_2}).
$
Since $\det(g)$ is invertible on $Q$, after shrinking once more we may
assume that $\det(\widetilde g)$ is invertible, hence
$
\widetilde g\in GL_r(\mathcal O_{\widetilde Z_2}).
$
Replacing the chosen trivialisation of $F_2$ by $\widetilde g^{-1}$,
we reduce to the case $\vartheta=\mathrm{Id}$.
With this choice, \eqref{eq:equalizer-F} identifies $\mathcal F$
locally with
\[
\mathcal O_{\widetilde Z_1}^{\oplus r}
\times_{\mathcal O_Q^{\oplus r}}
\mathcal O_{\widetilde Z_2}^{\oplus r}
\cong
\left(
\mathcal O_{\widetilde Z_1}\times_{\mathcal O_Q}\mathcal O_{\widetilde Z_2}
\right)^{\oplus r}
\cong
\mathcal O_{Z_0}^{\oplus r}.
\]
Thus $\mathcal F$ is locally free of rank $r$.
The identifications $\phi_i^*\mathcal F\simeq F_i$ and the induced
identification of $\mathcal F|_Q$ with $F_1|_Q$ and $F_2|_Q$ via
$\vartheta$ follow from the construction.
\end{proof}

\begin{lemma}
\label{lem:ward-gluing-constant}
For Ward bundles \eqref{eq:Fi-trivial-on-Qi}, any gluing
isomorphism $\vartheta$ is constant: after choosing trivialisations of
$F_1|_Q$ and $F_2|_Q$, it is given by a constant matrix in
$GL_r(\C)$.
\end{lemma}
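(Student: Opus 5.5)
Fix trivialisations $F_1|_Q\simeq\OO_Q^{\oplus r}$ and $F_2|_Q\simeq\OO_Q^{\oplus r}$, which exist by \eqref{eq:Fi-trivial-on-Qi}. The first task is to check that \eqref{eq:Fi-trivial-on-Qi} really holds globally and not only fibrewise. Since $f_i|_{Q_i}=i_i\circ g_i$, with $g_i:Q_i\to\ell_i$ and $i_i:\ell_i\hookrightarrow Z_i$, we have
\[
F_i|_{Q_i}\simeq g_i^*\bigl(\mathcal E_i|_{\ell_i}\bigr).
\]
The line $\ell_i=\pi_i^{-1}(x_i)$ is a real twistor line, so the Ward hypothesis gives $\mathcal E_i|_{\ell_i}\simeq\OO_{\ell_i}^{\oplus r}$. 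Pulling back, $F_i|_{Q_i}\simeq\OO_{Q_i}^{\oplus r}$. Pulling $F_2|_{Q_2}$ back along the biholomorphism $\sigma$ keeps it trivial.

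In these trivialisations, $\vartheta$ is a global section of the sheaf of invertible matrices,
\[
\vartheta\in H^0\bigl(Q,GL_r(\OO_Q)\bigr),
\]
that is, an $r\times r$ matrix of global holomorphic functions on $Q$ whose determinant is nowhere zero.

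The key input is compactness and connectedness of $Q\simeq\PP^1\times\PP^1$. Since $Q$ is a connected compact complex manifold (equivalently, a connected projective variety), $H^0(Q,\OO_Q)=\C$. Hence every entry of the matrix is a constant, and $\vartheta\in M_r(\C)$. Its determinant is a nonzero constant because $\vartheta$ is an isomorphism, so $\vartheta\in GL_r(\C)$.

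A different choice of trivialisations changes $\vartheta$ to $A\vartheta B^{-1}$, where $A$ and $B$ are automorphisms of the trivial bundle. By the same argument these are again constant matrices in $GL_r(\C)$. The statement "constant up to choice of trivialisation" is therefore independent of the choices, and the proof is complete.

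There is no real obstacle in this argument. The only point that needs care is the one handled at the start: the triviality of $F_i|_{Q_i}$ must be a global statement on $Q_i$, and it is, because it is pulled back from the trivial bundle $\mathcal E_i|_{\ell_i}$ on $\ell_i$, rather than merely being trivial on each fibre of $g_i$.
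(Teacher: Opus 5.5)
Your proposal is correct and follows the paper's argument: after trivialising, $\vartheta$ is a matrix of global holomorphic functions on the compact connected $Q\simeq\PP^1\times\PP^1$, hence constant because $H^0(Q,\OO_Q)=\C$, with nonzero determinant. Your preliminary check that $F_i|_{Q_i}\simeq g_i^*(\mathcal E_i|_{\ell_i})$ is globally trivial matches the justification the paper gives just before the lemma.
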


\begin{proof}
After trivialisations, $\vartheta$ is a morphism
$Q\simeq\PP^1\times\PP^1\to GL_r(\C)$. Every regular function on $Q$
is constant ($H^0(Q,\mathcal O_Q)=\C$), so the matrix entries are
constant.
\end{proof}

Up to changing the trivialisation on one side by a constant matrix, we may assume $\vartheta=\mathrm{Id}$ in the Ward case.

\subsection{Mayer--Vietoris and unobstructedness}
%\label{subsec:MV-unobs}

\begin{lemma}\label{lem:MV-End}
There is a short exact sequence of coherent sheaves on $Z_0$
\begin{equation}\label{eq:MV-End}
0\longrightarrow\mathrm{End}(\mathcal F)
\longrightarrow\phi_{1*}\mathrm{End}(F_1)\oplus\phi_{2*}\mathrm{End}(F_2)
\xrightarrow{\ \rho\ }
\iota_*\mathrm{End}(\mathcal F|_Q)
\longrightarrow 0,
\end{equation}
where $\rho(\varphi_1,\varphi_2)=\varphi_1|_Q-\mathrm{Ad}(\vartheta)(\varphi_2|_Q)$.
Taking cohomology yields the long exact sequence
\begin{equation}\label{eq:MV-End-LES}
\cdots\to H^k(Z_0,\mathrm{End}\,\mathcal F)\to
H^k(\widetilde Z_1,\mathrm{End}\,F_1)\oplus
H^k(\widetilde Z_2,\mathrm{End}\,F_2)\to
H^k(Q,\mathrm{End}(\mathcal F|_Q))\to\cdots.
\end{equation}
In the Ward case $\mathcal F|_Q\simeq\mathcal O_Q^{\oplus r}$, so
\begin{equation}\label{eq:H1H2Q-vanish}
H^1(Q,\mathrm{End}(\mathcal F|_Q))=0,\qquad
H^2(Q,\mathrm{End}(\mathcal F|_Q))=0.
\end{equation}
\end{lemma}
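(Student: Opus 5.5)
The plan is to derive \eqref{eq:MV-End} by applying the functor $\mathcal{H}om(\mathcal F,-)$ to the gluing sequence \eqref{eq:equalizer-F}. By \Cref{lem:glued-bundle-correct}, $\mathcal F$ is locally free, so this functor is exact. We then rewrite each term. For $i=1,2$, adjunction together with $\phi_i^*\mathcal F\simeq F_i$ gives
\[
\mathcal{H}om(\mathcal F,\phi_{i*}F_i)\simeq\phi_{i*}\mathcal{H}om(\phi_i^*\mathcal F,F_i)\simeq\phi_{i*}\mathrm{End}(F_i),
\]
and likewise $\mathcal{H}om(\mathcal F,\iota_*(F_1|_Q))\simeq\iota_*\mathrm{End}(\mathcal F|_Q)$, using $\mathcal F|_Q\simeq F_1|_Q$. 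It then remains to check that the induced middle map has the stated form $\rho(\varphi_1,\varphi_2)=\varphi_1|_Q-\Ad(\vartheta)(\varphi_2|_Q)$.

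An alternative, which may be cleaner, is to work locally as in the proof of \Cref{lem:glued-bundle-correct}. After choosing trivialisations with $\vartheta=\mathrm{Id}$, the sequence becomes $r^2$ copies of the standard Ferrand sequence
\[
0\to\OO_{Z_0}\to\phi_{1*}\OO_{\widetilde Z_1}\oplus\phi_{2*}\OO_{\widetilde Z_2}\to\iota_*\OO_Q\to0,
\]
which is exact by the fibre-product description $\OO_{Z_0}\simeq\OO_{\widetilde Z_1}\times_{\OO_Q}\OO_{\widetilde Z_2}$; surjectivity on the right holds because $\OO_{\widetilde Z_1}\to\OO_Q$ is already surjective. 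Under a general $\vartheta$, conjugating back shows that an endomorphism $\varphi_2$ of $F_2$ is compared with $\varphi_1$ through $\vartheta$. Matching the convention in $\rho_\vartheta$ gives the $\Ad(\vartheta)$ term, which we read as $\vartheta^{-1}\circ\varphi_2|_Q\circ\vartheta$. The main (mild) obstacle is this sign/convention bookkeeping, together with checking that the local identifications glue. They do, because the maps are defined globally and only their exactness is verified locally.

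The long exact sequence \eqref{eq:MV-End-LES} follows by taking sheaf cohomology on $Z_0$, since $\phi_i$ and $\iota$ are closed immersions and hence $\phi_{i*}$ and $\iota_*$ preserve cohomology. For \eqref{eq:H1H2Q-vanish}, in the Ward case \eqref{eq:Fi-trivial-on-Qi} gives $\mathcal F|_Q\simeq\OO_Q^{\oplus r}$. Hence $\mathrm{End}(\mathcal F|_Q)\simeq\OO_Q^{\oplus r^2}$, and the vanishing reduces to $H^k(\PP^1\times\PP^1,\OO)=0$ for $k=1,2$. This follows from the K\"unneth formula and $H^1(\PP^1,\OO)=0$.
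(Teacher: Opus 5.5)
Your proposal is correct, and your main route differs from the paper's. The paper argues directly. It takes from the gluing description that an endomorphism of $\mathcal F$ is the same as a pair $(\varphi_1,\varphi_2)$ compatible along $Q$, which identifies $\mathrm{End}(\mathcal F)$ with $\ker\rho$. It then proves surjectivity of $\rho$ by lifting a local section $\psi$ of $\mathrm{End}(F_1|_Q)$ to $\widetilde\psi\in\mathrm{End}(F_1)$ and noting $\rho(\widetilde\psi,0)=\psi$.

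You instead apply the exact functor $\mathcal{H}om(\mathcal F,-)$ to the defining sequence \eqref{eq:equalizer-F} and use the adjunction $\mathcal{H}om(\mathcal F,\phi_*G)\simeq\phi_*\mathcal{H}om(\phi^*\mathcal F,G)$. This gives the kernel identification and the right-hand surjectivity at once, by a formal argument rather than the paper's informal ``compatible pairs'' description. The cost is that you need full exactness of \eqref{eq:equalizer-F}. Its surjectivity on the right is the same one-sided lifting the paper uses, so this is harmless.

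Your computation also fixes the position of $\vartheta$: the induced map is $\varphi_1|_Q-\vartheta^{-1}\circ\varphi_2|_Q\circ\vartheta$. That is the type-correct reading of $\mathrm{Ad}(\vartheta)$ for $\vartheta\colon F_1|_Q\to F_2|_Q$. The paper's displayed $\vartheta\circ\varphi_2|_Q\circ\vartheta^{-1}$ only makes sense after identifying both restrictions with a common bundle, which is immaterial in the Ward case where $\vartheta=\mathrm{Id}$.

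Your local alternative via $r^2$ copies of the Ferrand sequence is closer in spirit to the paper's proof of \Cref{lem:glued-bundle-correct}. The remaining steps coincide with the paper's: the long exact sequence, where you rightly note that $\phi_{i*}$ and $\iota_*$ are exact and preserve cohomology, and the K\"unneth vanishing on $Q$.
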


\begin{proof}
We first prove the exactness of \eqref{eq:MV-End}. Recall from
\eqref{eq:equalizer-F} that $\mathcal F$ is defined by gluing $F_1$
and $F_2$ along the isomorphism
\[
\vartheta:F_1|_Q\xrightarrow{\sim}F_2|_Q.
\]
An endomorphism of the glued bundle $\mathcal F$ is therefore
equivalent to a pair of endomorphisms
\[
\varphi_i\in \End(F_i),\qquad i=1,2,
\]
whose restrictions to the double locus are compatible with the
gluing, namely
\[
\varphi_1|_Q=\Ad(\vartheta)(\varphi_2|_Q)
=\vartheta\circ \varphi_2|_Q\circ \vartheta^{-1}.
\]
This identifies $\End(\mathcal F)$ with the kernel of the morphism
\[
\rho:\phi_{1*}\End(F_1)\oplus\phi_{2*}\End(F_2)
\longrightarrow \iota_*\End(\mathcal F|_Q),
\]
defined by
\[
\rho(\varphi_1,\varphi_2)
=
\varphi_1|_Q-\Ad(\vartheta)(\varphi_2|_Q).
\]
It remains to check surjectivity of $\rho$, which is a local question
near $Q$. Since $F_1$ is locally free on $\widetilde Z_1$, the
restriction map
\[
\End(F_1)\longrightarrow \End(F_1|_Q)
\]
is surjective as a morphism of sheaves along the closed immersion
$j_1:Q\hookrightarrow \widetilde Z_1$. Hence, given a local section
\[
\psi\in \End(\mathcal F|_Q)\simeq \End(F_1|_Q),
\]
we may choose a local lift $\widetilde\psi\in \End(F_1)$, and then
$
\rho(\widetilde\psi,0)=\psi.
$
Thus \eqref{eq:MV-End} is a short exact sequence.
The long exact sequence \eqref{eq:MV-End-LES} is then obtained by
applying sheaf cohomology to \eqref{eq:MV-End}.
In the Ward case, one has
\[
\mathcal F|_Q\simeq \mathcal O_Q^{\oplus r},
\]
hence
$
\End(\mathcal F|_Q)\simeq \End(\mathcal O_Q^{\oplus r})
\simeq \mathcal O_Q^{\oplus r^2}.
$
Therefore
$
H^k\bigl(Q,\End(\mathcal F|_Q)\bigr)
\simeq H^k(Q,\mathcal O_Q)^{\oplus r^2}.
$
Since $Q\simeq \PP^1\times \PP^1$, one has
\[
H^1(Q,\mathcal O_Q)=0,\qquad H^2(Q,\mathcal O_Q)=0,
\]
for instance by the K\"unneth formula and the vanishing
$H^1(\PP^1,\mathcal O_{\PP^1})=0$. This proves
\eqref{eq:H1H2Q-vanish}.
\end{proof}

\begin{proposition}\label{prop:DF-unobs-clean}
Assume
\begin{equation}\label{eq:H2-EndFi-zero}
H^2(\widetilde Z_1,\mathrm{End}\,F_1)=0,\qquad
H^2(\widetilde Z_2,\mathrm{End}\,F_2)=0.
\end{equation}
Then
$
H^2(Z_0,\mathrm{End}\,\mathcal F)=0.
$
\end{proposition}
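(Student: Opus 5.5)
The argument reads off the long exact sequence \eqref{eq:MV-End-LES} of \Cref{lem:MV-End} in degree $2$. Taking cohomology of the short exact sequence \eqref{eq:MV-End}, and using that $\phi_{i*}$ and $\iota_*$ are pushforwards along closed immersions (hence exact, preserving cohomology), gives the exact segment
\[
H^1(Q,\mathrm{End}(\mathcal F|_Q))\longrightarrow H^2(Z_0,\mathrm{End}\,\mathcal F)\longrightarrow H^2(\widetilde Z_1,\mathrm{End}\,F_1)\oplus H^2(\widetilde Z_2,\mathrm{End}\,F_2).
\]
The right-hand term vanishes by the hypothesis \eqref{eq:H2-EndFi-zero}. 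So $H^2(Z_0,\mathrm{End}\,\mathcal F)$ is a quotient of the image of the connecting map from $H^1(Q,\mathrm{End}(\mathcal F|_Q))$.

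To kill the left-hand term, I would work in the Ward setting in force in this subsection. There $\mathcal F|_Q\simeq\mathcal O_Q^{\oplus r}$ by \eqref{eq:Fi-trivial-on-Qi} together with \Cref{lem:glued-bundle-correct}. Then \eqref{eq:H1H2Q-vanish} gives $H^1(Q,\mathrm{End}(\mathcal F|_Q))=0$. Exactness then forces $H^2(Z_0,\mathrm{End}\,\mathcal F)=0$.

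The only point needing care is the identification of cohomology of pushforwards: $H^k(Z_0,\phi_{i*}G)=H^k(\widetilde Z_i,G)$ and similarly for $\iota_*$. This holds because closed immersions are affine morphisms with vanishing higher direct images. I would also note that the argument in fact needs only $H^1(Q,\mathrm{End}(\mathcal F|_Q))=0$. For a general gluing with nontrivial $\mathcal F|_Q$, this vanishing would have to be added as a hypothesis; in the Ward case it is automatic. I expect no real obstacle; the step most likely to need justification is confirming that the statement is meant in the Ward case, so that \eqref{eq:H1H2Q-vanish} is available.
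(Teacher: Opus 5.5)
Your proposal is correct and follows the paper's proof: both read off the segment $H^1(Q,\mathrm{End}(\mathcal F|_Q))\to H^2(Z_0,\mathrm{End}\,\mathcal F)\to H^2(\widetilde Z_1,\mathrm{End}\,F_1)\oplus H^2(\widetilde Z_2,\mathrm{End}\,F_2)$ of the long exact sequence of \eqref{eq:MV-End}, and combine the Ward-case vanishing \eqref{eq:H1H2Q-vanish} with the hypothesis. You are right that only $H^1(Q,\mathrm{End}(\mathcal F|_Q))=0$ is needed; the paper also cites $H^2(Q,\cdot)=0$, which merely upgrades the injection to an isomorphism. One small wording point: $H^2(Z_0,\mathrm{End}\,\mathcal F)$ is exactly the image of the connecting map, not a quotient of that image, but this does not affect the conclusion.
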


\begin{proof}
Apply cohomology to the short exact sequence \eqref{eq:MV-End}. By
\Cref{lem:MV-End}, in the Ward case one has
\[
H^1\bigl(Q,\End(\mathcal F|_Q)\bigr)=0,
\qquad
H^2\bigl(Q,\End(\mathcal F|_Q)\bigr)=0.
\]
Hence the relevant portion of the long exact sequence \eqref{eq:MV-End-LES} becomes
\begin{multline*}
H^1\!\bigl(Q,\End(F|_Q)\bigr)\longrightarrow
H^2\!\bigl(Z_0,\End(F)\bigr)\longrightarrow
H^2\!\bigl(\widetilde Z_1,\End(F_1)\bigr)\\
{}\oplus H^2\!\bigl(\widetilde Z_2,\End(F_2)\bigr)
\longrightarrow H^2\!\bigl(Q,\End(F|_Q)\bigr).
\end{multline*}
Since the outer terms vanish, this simplifies to
\[
0\longrightarrow
H^2\!\bigl(Z_0,\End(F)\bigr)\longrightarrow
H^2\!\bigl(\widetilde Z_1,\End(F_1)\bigr)\oplus
H^2\!\bigl(\widetilde Z_2,\End(F_2)\bigr)
\longrightarrow 0.
\]
In particular, the middle arrow is injective, so there is a natural injection
\[
H^2\!\bigl(Z_0,\End(F)\bigr)\hookrightarrow
H^2\!\bigl(\widetilde Z_1,\End(F_1)\bigr)\oplus
H^2\!\bigl(\widetilde Z_2,\End(F_2)\bigr).
\]
By the assumption \eqref{eq:H2-EndFi-zero}, the right-hand side is zero. Therefore
\[
H^2\!\bigl(Z_0,\End(F)\bigr)=0,
\]
as claimed.
\end{proof}

The vanishing in \Cref{prop:DF-unobs-clean} concerns deformations of $\mathcal F$ on the fixed SNC threefold $Z_0$; it does not by itself imply the existence of a relative bundle on a semistable smoothing. In the Donaldson--Friedman setting, such a relative bundle is supplied by \cite[\S6.3]{DF}.

\subsection{Formal and analytic triviality near \texorpdfstring{$Q$}{Q}}
%\label{subsec:formal-analytic-triv}

The triviality in \eqref{eq:Fi-trivial-on-Qi} extends to every
infinitesimal neighbourhood of $Q_i$ and, by Grauert's formal principle
\cite{Grauert62}, to an analytic neighbourhood. Thus no local twisting
occurs near the double locus for Ward bundles.

For $i=1,2$ let $I_i\subset\mathcal O_{\widetilde Z_i}$ be the ideal
sheaf of $Q_i$ and set $Q_i^{(m)}:=V(I_i^{m+1})$.

\begin{lemma}\label{lem:conormal-powers}
One has
\begin{equation*}\label{eq:I-powers}
I_i^m/I_i^{m+1}\simeq\mathcal O_{Q_i}(m,m)
\qquad\text{for all }m\ge 0.
\end{equation*}
\end{lemma}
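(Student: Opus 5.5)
The plan is to reduce the statement to the standard description of the graded pieces of the $I_i$-adic filtration along a Cartier divisor, and then to read off the line bundle from \Cref{lem:normal-exceptional}.

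\emph{Reduction to the conormal bundle.} Since $Q_i\subset\widetilde Z_i$ is a smooth Cartier divisor, $I_i\simeq\OO_{\widetilde Z_i}(-Q_i)$ is invertible. For an invertible ideal, the multiplication map $\Sym^m(I_i/I_i^2)\to I_i^m/I_i^{m+1}$ is an isomorphism. Locally this is clear: if $I_i=(v)$ with $v$ a nonzerodivisor, then $I_i^m/I_i^{m+1}$ is freely generated over $\OO_{Q_i}$ by the class of $v^m$. Since $I_i/I_i^2$ is a line bundle on $Q_i$, its symmetric power is just the tensor power, so
\[
I_i^m/I_i^{m+1}\simeq (I_i/I_i^2)^{\otimes m}.
\]
The case $m=0$ reads $\OO_{Q_i}\simeq\OO_{Q_i}(0,0)$.

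\emph{Identifying the conormal bundle.} The conormal bundle is
\[
I_i/I_i^2\simeq \OO_{\widetilde Z_i}(-Q_i)|_{Q_i}\simeq \mathcal N_{Q_i/\widetilde Z_i}^\vee .
\]
By \Cref{lem:normal-exceptional}, $\mathcal N_{Q_i/\widetilde Z_i}\simeq\OO_{Q_i}(-1)$, so $I_i/I_i^2\simeq\OO_{Q_i}(1)$. Under $Q_i=\PP(\mathcal N_{\ell_i/Z_i})$ with $\mathcal N_{\ell_i/Z_i}\simeq\OO_{\PP^1}(1)^{\oplus2}$, we have $\OO_{Q_i}(1)\simeq\OO_{Q_i}(1,1)$, as used in \Cref{lem:chow-quadric} and \eqref{eq:xi-zw}. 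Combining this with the previous step gives $I_i^m/I_i^{m+1}\simeq\OO_{Q_i}(m,m)$.

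\emph{Main obstacle.} The only delicate point is the identification $\OO_{\PP(\mathcal N)}(1)\simeq\OO(1,1)$ on $\PP^1\times\PP^1$, which depends on the $\Proj(\Sym)$ convention. I will check it directly by degree. On a fibre $F$ of $g_i$, \Cref{lem:O1-restrict-fibre} gives degree $1$. On the other ruling, the curves are sections of $g_i$ coming from trivial subbundles $\OO(1)\subset\mathcal N_{\ell_i/Z_i}$. On such a section the tautological quotient restricts to $\OO_{\PP^1}(1)$, so the degree there is also $1$. A line bundle on $\PP^1\times\PP^1$ is determined by these two degrees, so the bidegree is $(1,1)$.

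As a sign check, the conormal bundle of an exceptional divisor must be positive on the fibres, which is consistent with $(1,1)$ and not with $(-1,-1)$.
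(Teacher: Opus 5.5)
Your reduction $I_i^m/I_i^{m+1}\simeq(I_i/I_i^2)^{\otimes m}$ is fine, and otherwise you follow the same route as the paper's proof. The step $I_i/I_i^2\simeq\OO_{Q_i}(1,1)$ fails, however, and the lemma as stated is false for $m\ge1$. You located the delicate point correctly but resolved it the wrong way. The blow-up gives $Q_i=\Proj\Sym(I_{\ell_i}/I_{\ell_i}^2)=\Proj\Sym(\mathcal N_{\ell_i/Z_i}^\vee)$, so points of $Q_i$ are normal \emph{lines}. Then $\mathcal N_{Q_i/\widetilde Z_i}$ is the tautological sub-line-bundle of $g_i^*\mathcal N_{\ell_i/Z_i}$, and the conormal bundle is the $\OO(1)$ of $\Proj\Sym(\mathcal N_{\ell_i/Z_i}^\vee)$, not of $\Proj\Sym(\mathcal N_{\ell_i/Z_i})$. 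Writing $\mathcal N_{\ell_i/Z_i}=\OO_{\ell_i}(1)\otimes\C^2$ gives $Q_i\simeq\ell_i\times\PP^1$ with tautological subbundle $p_1^*\OO_{\ell_i}(1)\otimes p_2^*\OO_{\PP^1}(-1)$. The sections $\ell_i\times\{p\}$ (class $w_i$) do come from subbundles $\OO(1)\subset\mathcal N_{\ell_i/Z_i}$, but on them it is the \emph{normal} bundle that restricts to $\OO(1)$, so the conormal bundle has degree $-1$ there. Hence $c_1(\mathcal N_{Q_i/\widetilde Z_i})=b_i-w_i=z_i$ and $c_1(I_i/I_i^2)=-z_i$, so $I_i^m/I_i^{m+1}\simeq p_1^*\OO_{\ell_i}(-m)\otimes p_2^*\OO_{\PP^1}(m)$. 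Your sign check only probes the fibres of $g_i$, where both candidates have degree $+1$.

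Two independent checks rule out $\OO(1,1)$. First, for the blow-up of a curve $C$ in a threefold $X$ one has $E^3=-\deg\mathcal N_{C/X}$, which is $-2$ here. For a line in $\PP^3$, write $E=H-F$ with $F$ the strict transform of a plane through the line; then $F^2=0$ and $E^3=1-3=-2$. The class above gives $Q_i^3=\deg z_i^2=-2$, whereas a conormal bundle $\OO(1,1)$ would give $Q_i^3=\deg\xi_i^2=2$. Second, in the semistable family $\OO_{\mathcal Z}(\widetilde Z_1+\widetilde Z_2)=\OO_{\mathcal Z}(Z_0)$ is trivial. Restricting to $Q$ gives $\mathcal N_{Q/\widetilde Z_1}\otimes\sigma^*\mathcal N_{Q/\widetilde Z_2}\simeq\OO_Q$. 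This holds because $z_1+\sigma^*z_2=0$, which is exactly why $\sigma$ must swap the rulings; it would fail with $\OO(-1,-1)$ on both sides.

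So the input you share with the paper's proof, namely \Cref{lem:normal-exceptional} read together with $Q_i=\PP(\mathcal N_{\ell_i/Z_i})=\Proj\Sym(\mathcal N_{\ell_i/Z_i})$, combines two incompatible conventions. Run correctly, your argument proves $I_i^m/I_i^{m+1}\simeq\OO_{Q_i}(-m,m)$ in the bidegree of $\ell_i\times\PP^1$. Note also that then $H^1(Q_i,I_i^m/I_i^{m+1})\simeq H^1(\PP^1,\OO(-m))\otimes H^0(\PP^1,\OO(m))\neq0$ for $m\ge2$. Consequently the vanishing the paper draws from this lemma in the next step is not available either.
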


\begin{proof}
The conormal sheaf of $Q_i$ in $\widetilde Z_i$ is the dual of the
normal bundle:
\[
I_i/I_i^2\simeq\mathcal N_{Q_i/\widetilde Z_i}^\vee.
\]
By \Cref{lem:normal-exceptional},
$\mathcal N_{Q_i/\widetilde Z_i}\simeq\mathcal O_{Q_i}(-1)$, hence
$
I_i/I_i^2\simeq\mathcal O_{Q_i}(1)\simeq\mathcal O_{Q_i}(1,1).
$
Since $I_i$ is an invertible sheaf,
\[
I_i^m/I_i^{m+1}
\simeq(I_i/I_i^2)^{\otimes m}
\simeq\mathcal O_{Q_i}(m,m).
\qedhere
\]
\end{proof}

\begin{lemma}\label{lem:formal-triviality-neighborhoods}
Let $E$ be a holomorphic vector bundle on $\widetilde Z_i$ with
$E|_{Q_i}\simeq\mathcal O_{Q_i}^{\oplus r}$. Then for every $m\ge 0$,
\begin{equation*}\label{eq:formal-trivial}
E|_{Q_i^{(m)}}\simeq\mathcal O_{Q_i^{(m)}}^{\oplus r},
\end{equation*}
compatibly with the trivialisation on $Q_i$.
\end{lemma}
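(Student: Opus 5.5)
The plan is to argue by induction on $m$, using the standard obstruction theory for extending trivialisations through the square-zero thickenings $Q_i^{(m-1)}\subset Q_i^{(m)}$. The case $m=0$ is the hypothesis $E|_{Q_i}\simeq\mathcal O_{Q_i}^{\oplus r}$.

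For the inductive step, consider the exact sequence of sheaves on $Q_i^{(m)}$
\[
0\longrightarrow I_i^m/I_i^{m+1}\longrightarrow \mathcal O_{Q_i^{(m)}}\longrightarrow \mathcal O_{Q_i^{(m-1)}}\longrightarrow 0 .
\]
Tensoring with $E$ and with $\mathrm{End}(E)$ uses the locally free hypothesis. By \Cref{lem:conormal-powers} and the triviality $E|_{Q_i}\simeq\mathcal O^{\oplus r}$, the kernel terms become $\mathcal O_{Q_i}(m,m)^{\oplus r}$ and $\mathcal O_{Q_i}(m,m)^{\oplus r^2}$, respectively.

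\textbf{Lifting sections.} Suppose $e_1,\dots,e_r$ is a frame of $E|_{Q_i^{(m-1)}}$ extending the given frame on $Q_i$. The obstruction to lifting each $e_k$ to a section of $E|_{Q_i^{(m)}}$ lies in $H^1(Q_i,\mathcal O_{Q_i}(m,m))^{\oplus r}$. This group vanishes for $m\ge 0$ by K\"unneth, since $H^1(\PP^1,\mathcal O(m))=0$ and $H^0\otimes H^1=0$.

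\textbf{Frame property of the lifts.} Choose lifts $\widetilde e_1,\dots,\widetilde e_r$. They define a morphism $\mathcal O_{Q_i^{(m)}}^{\oplus r}\to E|_{Q_i^{(m)}}$ whose restriction to $Q_i$ is an isomorphism. Since $Q_i^{(m)}$ has the same underlying space as $Q_i$ and the ideal is nilpotent, Nakayama's lemma (equivalently, invertibility of the determinant modulo a nilpotent ideal) shows the morphism is an isomorphism. By construction it is compatible with the earlier trivialisations, in particular with the one on $Q_i$.

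Alternatively, one can phrase the step as follows: isomorphism classes of extensions of the trivial bundle from $Q_i^{(m-1)}$ to $Q_i^{(m)}$ form a torsor under $H^1(Q_i,\mathrm{End}\otimes I^m/I^{m+1})=H^1(\mathcal O(m,m))^{r^2}=0$, so the extension is trivial. The section-lifting route gives the compatibility with the chosen trivialisation more directly, and that is the route I would follow.

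The only point needing care is the identification of the kernel twisted by $E$. Since $I^m/I^{m+1}$ is an $\mathcal O_{Q_i}$-module, $E\otimes I^m/I^{m+1}\simeq E|_{Q_i}\otimes\mathcal O_{Q_i}(m,m)$, and this uses only the $m=0$ triviality. The crux is then the vanishing of $H^1(\PP^1\times\PP^1,\mathcal O(m,m))$ for $m\ge0$. This is where the positivity of the conormal bundle, i.e.\ $\mathcal N\simeq\mathcal O_Q(-1)$, is essential.
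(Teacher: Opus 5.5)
Your argument is the paper's own: induct over the square-zero thickenings and kill each step with $H^1(Q_i,I_i^m/I_i^{m+1})=0$, taken from the conormal-powers lemma $I_i^m/I_i^{m+1}\simeq\mathcal O_{Q_i}(m,m)$. Your section-lifting-plus-Nakayama version of the step is logically fine. The gap is that this input is false, so the step you single out as the crux fails.

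The exceptional divisor is $\Proj\Sym(I_{\ell_i}/I_{\ell_i}^2)=\Proj\Sym(N^\vee)$, with $N^\vee\simeq\mathcal O_{\PP^1}(-1)^{\oplus 2}$. The sheaf $\mathcal O_{\widetilde Z_i}(-Q_i)|_{Q_i}$ is the $\mathcal O(1)$ of \emph{this} Proj, namely $g_i^*\mathcal O_{\ell_i}(-1)\otimes\mathcal O_{\PP^1}(1)$. Its class is $w_i-b_i$, not $b_i+w_i$; the paper uses the $\mathcal O(1)$ of $\Proj\Sym N$ instead. Adjunction confirms this: $K_{\widetilde Z_i}=f_i^*K_{Z_i}+Q_i$ and $\deg K_{Z_i}|_{\ell_i}=-4$ give $2\,Q_i|_{Q_i}=K_{Q_i}+4b_i=2b_i-2w_i$. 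It is also forced by the smooth total space of the smoothing, which requires $\mathcal N_{Q_1/\widetilde Z_1}\otimes\sigma^*\mathcal N_{Q_2/\widetilde Z_2}\simeq\mathcal O_{Q_1}$. That holds for $\mathcal O(b-w)$ with $\sigma$ swapping the rulings, and fails for $\mathcal O(-1,-1)$ on both sides. Hence
\[
I_i^m/I_i^{m+1}\simeq\mathcal O_{\ell_i}(-m)\boxtimes\mathcal O_{\PP^1}(m),
\qquad
H^1\bigl(Q_i,I_i^m/I_i^{m+1}\bigr)\simeq H^1\bigl(\PP^1,\mathcal O(-m)\bigr)\otimes H^0\bigl(\PP^1,\mathcal O(m)\bigr)\simeq\C^{(m-1)(m+1)}.
\]
This vanishes for $m=1$ but not for $m\ge 2$. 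Already at $m=2$, the obstruction to lifting each frame section lies in $H^1(Q_i,I_i^2/I_i^3)^{\oplus r}\simeq\C^{3r}$, and nothing in the hypotheses forces it to vanish.

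This cannot be patched, because the statement itself fails in this generality. For $E=f_i^*\mathcal E$ one has $(f_i)_*\mathcal O_{Q_i^{(m)}}=\mathcal O_{Z_i}/I_{\ell_i}^{m+1}$. Compatible trivialisations on all $Q_i^{(m)}$ would therefore push forward to a formal trivialisation of $\mathcal E$ along $\ell_i$.

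Take $Z_i=\PP^3$, $\ell_i$ a real line, and $\mathcal E$ the null-correlation bundle (the basic $SU(2)$-instanton), which is trivial on $\ell_i$. A formal trivialisation would give $H^0(\widehat{\mathcal E})\supseteq\C^2$ on the completion along $\ell_i$. But because $N^\vee$ is negative, formal sections of a vector bundle along a line in $\PP^3$ are global sections. To see this, embed the bundle in some $\mathcal O(a)^{\oplus M}$ with locally free cokernel, and use $\dim H^0(\widehat{\mathcal O(a)})\le\sum_{k\le a}h^0(\ell_i,\mathcal O(a-k)^{\oplus(k+1)})=h^0(\PP^3,\mathcal O(a))$. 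Since $H^0(\PP^3,\mathcal E)=0$, this is a contradiction, so the conclusion of the lemma fails for this $E$.

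What survives in general is only the first-order statement $E|_{Q_i^{(1)}}\simeq\mathcal O^{\oplus r}$. Higher orders require the extra hypothesis that the successive classes in $H^1\bigl(Q_i,\mathrm{End}(E|_{Q_i})\otimes I_i^m/I_i^{m+1}\bigr)$ vanish.
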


\begin{proof}
We argue by induction on $m$.
For $m=0$ there is nothing to prove, since the hypothesis is exactly
\[
E|_{Q_i}\simeq \mathcal O_{Q_i}^{\oplus r}
=
\mathcal O_{Q_i^{(0)}}^{\oplus r}.
\]
Now fix $m\ge 0$ and assume that a trivialisation
\[
E|_{Q_i^{(m)}}\simeq \mathcal O_{Q_i^{(m)}}^{\oplus r}
\]
has already been constructed. We want to extend it to
$Q_i^{(m+1)}$.
Consider the square-zero extension
\[
0\longrightarrow I_i^{m+1}/I_i^{m+2}
\longrightarrow \mathcal O_{Q_i^{(m+1)}}
\longrightarrow \mathcal O_{Q_i^{(m)}}
\longrightarrow 0.
\]
The obstruction to lifting the trivial bundle
$\mathcal O_{Q_i^{(m)}}^{\oplus r}$ from $Q_i^{(m)}$ to
$Q_i^{(m+1)}$ lies in
\[
H^1\!\left(
Q_i,\,
\End(\mathcal O_{Q_i}^{\oplus r})\otimes
I_i^{m+1}/I_i^{m+2}
\right).
\]
Since
\[
\End(\mathcal O_{Q_i}^{\oplus r})
\simeq \mathcal O_{Q_i}^{\oplus r^2},
\]
and by \Cref{lem:conormal-powers} one has
\[
I_i^{m+1}/I_i^{m+2}\simeq \mathcal O_{Q_i}(m+1,m+1),
\]
this obstruction group is
\[
H^1\!\left(Q_i,\mathcal O_{Q_i}(m+1,m+1)\right)^{\oplus r^2}.
\]

It remains to prove that
$
H^1\!\left(Q_i,\mathcal O_{Q_i}(m+1,m+1)\right)=0.
$
Since $Q_i\simeq \PP^1\times \PP^1$, the K\"unneth formula
\cite[III.5.7]{Hartshorne} gives
\[
H^1\!\left(\PP^1\times\PP^1,\mathcal O(m+1,m+1)\right)
\cong
\bigl(
H^1(\PP^1,\mathcal O(m+1))\otimes
H^0(\PP^1,\mathcal O(m+1))
\bigr)
\]
\[
\qquad\qquad\oplus
\bigl(
H^0(\PP^1,\mathcal O(m+1))\otimes
H^1(\PP^1,\mathcal O(m+1))
\bigr).
\]
But
$
H^1(\PP^1,\mathcal O(m+1))=0 $ for all $
  m\ge 0,$
so both summands vanish. Hence the obstruction group is zero, and the
given trivialisation extends from $Q_i^{(m)}$ to $Q_i^{(m+1)}$.
By induction, one obtains for every $m\ge 0$ an isomorphism
\[
E|_{Q_i^{(m)}}\simeq \mathcal O_{Q_i^{(m)}}^{\oplus r}.
\]
Moreover, the trivialisations can be chosen compatibly with the one on
$Q_i$, because at each step we are extending a previously fixed
trivialisation.
\end{proof}

\begin{lemma}\label{lem:analytic-triviality-near-Q}
If $E$ is a holomorphic vector bundle on $\widetilde Z_i$ with
$E|_{Q_i}\simeq\mathcal O_{Q_i}^{\oplus r}$, then there exists an
analytic neighbourhood $U_i$ of $Q_i$ in $\widetilde Z_i$ such that
$E|_{U_i}\simeq\mathcal O_{U_i}^{\oplus r}$.
\end{lemma}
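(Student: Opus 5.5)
The plan is to pass from the formal trivialisations of \Cref{lem:formal-triviality-neighborhoods} to an analytic one by Grauert's formal principle \cite{Grauert62}, using that $Q_i$ is a compact submanifold with negative normal bundle. The relevant positivity comes from \Cref{lem:normal-exceptional}: $\mathcal N_{Q_i/\widetilde Z_i}\simeq\OO_{Q_i}(-1)=\OO_{Q_i}(-1,-1)$ is negative. Equivalently, the conormal bundle $I_i/I_i^2\simeq\OO_{Q_i}(1,1)$ is ample. By Grauert's criterion, $Q_i$ is therefore exceptional: it admits a strongly pseudoconvex tubular neighbourhood. Concretely, $f_i$ contracts $Q_i$ onto the smooth curve $\ell_i$, but the relevant statement is Grauert's theorem for submanifolds with negative normal bundle.

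The first step is to compare $E$ and $\OO^{\oplus r}$ near $Q_i$. By \Cref{lem:formal-triviality-neighborhoods}, there are compatible isomorphisms $E|_{Q_i^{(m)}}\simeq\OO_{Q_i^{(m)}}^{\oplus r}$ for all $m$, hence an isomorphism $\widehat E\simeq\widehat\OO^{\oplus r}$ on the formal completion of $\widetilde Z_i$ along $Q_i$.

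The second step invokes Grauert's comparison theorem: for a compact submanifold with negative normal bundle, two holomorphic vector bundles on a neighbourhood that are isomorphic on the formal neighbourhood are isomorphic on some analytic neighbourhood. The mechanism behind it is worth spelling out. The sheaf $\mathcal H:=\mathcal{H}om(\OO^{\oplus r},E)=E^{\oplus r}$ has a formal section $\widehat s$ (the formal isomorphism). Grauert's results give that
\[
H^0(U,\mathcal H)\to \varprojlim_m H^0(Q_i^{(m)},\mathcal H)
\]
has dense image for small strongly pseudoconvex $U$. Moreover, the obstruction spaces $H^1(Q_i,\mathcal H|_{Q_i}\otimes I_i^m/I_i^{m+1})=H^1(Q_i,\OO(m,m))^{\oplus r^2}$ vanish by the Künneth computation already carried out in the proof of \Cref{lem:formal-triviality-neighborhoods}.

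Given this, one only needs an honest holomorphic section on a neighbourhood agreeing with $\widehat s$ to first order, i.e.\ on $Q_i$ itself. Such a section restricts on $Q_i$ to the isomorphism $E|_{Q_i}\simeq\OO^{\oplus r}$, so its determinant is nonvanishing on $Q_i$. By compactness it is then nonvanishing on a smaller neighbourhood $U_i$, which gives $E|_{U_i}\simeq\OO_{U_i}^{\oplus r}$.

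The main obstacle is producing the holomorphic section on an analytic neighbourhood extending a prescribed section on $Q_i$. I would first try an elementary route that avoids the full formal principle. Push $E$ forward along the contraction $f_i$, or work on a strongly pseudoconvex neighbourhood $U$, and use that $H^1(U,E^{\oplus r}\otimes I_i)=0$ for $U$ small. This vanishing should follow from Grauert's theorem combined with $H^1(Q_i,E|_{Q_i}\otimes I_i^m/I_i^{m+1})=0$ for all $m\ge1$, via the theorem on formal functions for the proper map $U\to f_i(U)$.

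With this vanishing, the restriction $H^0(U,E)\to H^0(Q_i,E|_{Q_i})$ is surjective. Lifting the $r$ frame sections of $E|_{Q_i}\simeq\OO^{\oplus r}$ then gives the desired frame near $Q_i$. Alternatively, I would simply cite Grauert's formal principle \cite{Grauert62} directly, as the paper announces.
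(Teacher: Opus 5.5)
\textbf{Gap: the normal bundle of $Q_i$ is not negative.} Your route is the paper's own: formal triviality from \Cref{lem:formal-triviality-neighborhoods}, then Grauert's comparison theorem for an exceptional set with negative normal bundle. The dense-image and open-determinant steps you spell out are the content of that citation, and they would be fine if its hypotheses held. They do not. The hypothesis you take over from \Cref{lem:normal-exceptional}, $\mathcal N_{Q_i/\widetilde Z_i}\simeq\mathcal O_{Q_i}(-1,-1)$, is false.

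Write $N:=\mathcal N_{\ell_i/Z_i}$. The exceptional divisor is $\PP(\mathcal I_{\ell_i}/\mathcal I_{\ell_i}^2)=\Proj\Sym(N^\vee)$, and its normal bundle is the $\mathcal O(-1)$ of \emph{that} projective bundle (Hartshorne, Thm.~II.8.24). Since $N^\vee\simeq\mathcal O(-1)^{\oplus 2}$, this differs from the paper's $\mathcal O_{Q_i}(-1)$ by $g_i^*\det N=g_i^*\mathcal O(2)$. Hence $c_1(\mathcal N_{Q_i/\widetilde Z_i})=-\xi_i+2b_i=b_i-w_i=z_i$. This has degree $-1$ on the fibres of $g_i$ (the curves contracted by $f_i$) but $+1$ on the other ruling. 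Two independent checks confirm it:
\begin{itemize}
\item $Q_i^3=\deg z_i^2=-2=-\deg N$, the standard value (e.g.\ for a line in $\PP^3$), whereas $\deg\xi_i^2=+2$.
\item The semistable model $t=uv$ forces $\mathcal N_{Q/\widetilde Z_1}\otimes\sigma^*\mathcal N_{Q/\widetilde Z_2}\simeq\mathcal O_Q$. This holds for $z_1+\sigma^*z_2=0$ by \eqref{eq:sigma-on-z}, which is why $\sigma$ must swap the rulings, but it fails for $-2\xi_1$.
\end{itemize}

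\textbf{Consequences for both of your routes.} $Q_i$ is not exceptional in Grauert's sense: it cannot be contracted to a point, and $f_i$ only contracts it onto $\ell_i$. Neighbourhoods of $\ell_i$ are not Stein, so your alternative route via $U\to f_i(U)$ breaks down as well. The formal step fails too. We have $I_i^m/I_i^{m+1}\simeq\mathcal N_{Q_i/\widetilde Z_i}^{\otimes(-m)}$ of bidegree $(-m,m)$, and
\[
H^1\simeq H^1(\PP^1,\mathcal O(-m))\otimes H^0(\PP^1,\mathcal O(m))\neq0\qquad\text{for } m\ge 2 .
\]
So the obstruction groups you claim vanish do not vanish.

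\textbf{The statement itself is false, so no repair is possible.} Take $X_i=S^4$, $Z_i=\PP^3$, $\mathcal E$ the Ward bundle of a charge-one instanton, and $E:=f_i^*\mathcal E$; this is exactly the $F_i$ of the Ward section. Then $E|_{Q_i}=g_i^*(\mathcal E|_{\ell_i})$ is trivial. Suppose $E|_U$ were trivial.
\begin{itemize}
\item Since $f_i$ is proper with $f_{i*}\mathcal O_{\widetilde Z_i}=\mathcal O_{Z_i}$, $\mathcal E$ would be trivial on a neighbourhood $V$ of $\ell_i$, and $V\supseteq\pi_i^{-1}(W)$ for a ball $W\ni x_i$.
\item In the smooth identification $\mathcal E=\pi_i^*E$, a holomorphic section over $\pi_i^{-1}(W)$ is constant along each twistor line. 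It therefore equals $\pi_i^*s$ with $\nabla_v s+i\nabla_{Jv}s=0$ for every compatible $J$, including $-J$, so $\nabla s=0$.
\item A holomorphic frame would then make the instanton flat on $W$, whereas its curvature vanishes nowhere.
\end{itemize}
The lemma, and the accompanying remark on the ``absence of local twisting'', should therefore be dropped. The additivity results of \Cref{prop:no-neck-term-ward} use only the projection formula and do not depend on it.
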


\begin{proof}
By \Cref{lem:formal-triviality-neighborhoods}, for every $m\ge 0$
one has
\[
E|_{Q_i^{(m)}}\simeq \mathcal O_{Q_i^{(m)}}^{\oplus r},
\]
compatibly in $m$. Equivalently, the formal completion of $E$ along
$Q_i$ is trivial:
\[
\widehat E
:=
E|_{\widehat{(\widetilde Z_i)}_{Q_i}}
\simeq
\mathcal O_{\widehat{(\widetilde Z_i)}_{Q_i}}^{\oplus r}.
\]

Now $Q_i\subset \widetilde Z_i$ is the exceptional divisor of the
blow-up of a smooth threefold along a smooth curve, hence it is an
exceptional analytic set in the sense of Grauert. Moreover, by \Cref{lem:normal-exceptional}, its normal bundle is
$\mathcal N_{Q_i/\widetilde Z_i}\simeq \mathcal O_{Q_i}(-1,-1)$ and is negative. Grauert's comparison/formal principle therefore applies to coherent locally free
sheaves in a sufficiently small neighbourhood of $Q_i$
\cite{Grauert62}. In particular, an isomorphism of formal completions
along $Q_i$ comes from an analytic isomorphism on some neighbourhood
of $Q_i$. Applying this to the two locally free sheaves
$E$ and $\mathcal O_{\widetilde Z_i}^{\oplus r}$, whose formal
completions are isomorphic, we conclude that there exists a sufficiently
small analytic neighbourhood $U_i$ of $Q_i$ in $\widetilde Z_i$ such
that
\[
E|_{U_i}\simeq \mathcal O_{U_i}^{\oplus r}.
\]

Thus $E$ is holomorphically trivial on an analytic neighbourhood of
$Q_i$.
\end{proof}

Thus a Ward bundle is holomorphically trivial on a neighbourhood of $Q_i$. This analytic triviality explains the absence of local twisting in the charge computation of \Cref{prop:no-neck-term-ward}.

\subsection{Additivity of the polarised charge}
%\label{subsec:charge-additivity}

Fix a Cartier class $H\in\CH^1(\mathcal Z)$ such that
$H_t:=H|_{Z_t}$ is ample for $t\neq 0$, and write
$H_0:=H|_{Z_0}$.

\begin{definition}\label{def:polarised-charge-clean}
For $t\neq 0$ define
\begin{equation*}\label{eq:charge-def}
\mathrm{charge}_H(\mathcal F_t)
:=\deg\bigl((c_2(\mathcal F_t)\,H_t)\cap[Z_t]\bigr)\in\Z.
\end{equation*}
\end{definition}

\begin{proposition}\label{prop:charge-specialises}
Assume there exists a holomorphic vector bundle $\mathscr F$ on
$\mathcal Z$, flat over $\Delta$, such that
\[
\mathscr F|_{Z_0}\simeq \mathcal F
\qquad\text{and}\qquad
\mathscr F|_{Z_t}\simeq \mathcal F_t
\]
for all $t\neq 0$ sufficiently small. Then
\begin{equation}\label{eq:charge-constant}
\mathrm{charge}_H(\mathcal F_t)
= \deg\bigl((c_2(\mathcal F)\,H_0)\cap[Z_0]\bigr)
\qquad\text{for }t\neq 0\text{ small}.
\end{equation}
\end{proposition}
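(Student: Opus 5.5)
The plan is to use Fulton's specialisation map and the fact that it preserves degrees of zero-cycles, together with \Cref{lem:chern-commutes-sp}. Pass to the formal model $\mathcal Z_R$ of \Cref{subsec:chow-specialisation} and restrict $\mathscr F$ and $H$ to it; write $\mathscr F_K$ and $H_K$ for the restrictions to the generic fibre $(\mathcal Z_R)_K$. Since $\mathcal Z_R$ is flat over $\Spec R$ and integral of dimension $4$, the closure of $[(\mathcal Z_R)_K]$ is $[\mathcal Z_R]$. Moreover $i_0^![\mathcal Z_R]=[Z_0]$, because $Z_0$ is the Cartier divisor $\operatorname{div}(t)$. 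Hence $\operatorname{sp}[(\mathcal Z_R)_K]=[Z_0]$ by \Cref{prop:specialisation-chow}.

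Apply \Cref{lem:chern-commutes-sp} twice: once to the bundle $\mathscr F$ with $i=2$, and once to the line bundle $\OO(H)$ with $i=1$ (or use \Cref{prop:practical-lifting}). This gives
\[
\operatorname{sp}\bigl((c_2(\mathscr F_K)H_K)\cap[(\mathcal Z_R)_K]\bigr)=(c_2(\mathcal F)H_0)\cap[Z_0]\in\CH_0(Z_0).
\]
Here we use that operational classes commute with each other and with refined Gysin maps. The next step is to show that $\operatorname{sp}$ preserves degree on $\CH_0$. The family is proper over $R$, so proper push-forward to $\Spec R$ commutes with specialisation \cite[\S20.3]{Fulton}. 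On the base, $\operatorname{sp}:\CH_0(\Spec K)\to\CH_0(\Spec\C)$ sends $[\Spec K]$ to $[\mathrm{pt}]$. Therefore $\deg_K$ of the left-hand zero-cycle equals $\deg\bigl((c_2(\mathcal F)H_0)\cap[Z_0]\bigr)$.

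It remains to identify $\deg_K$ on the generic fibre with $\mathrm{charge}_H(\mathcal F_t)$ for small $t\neq0$. This comparison between the formal generic fibre and the actual analytic fibres $Z_t$ is where I expect the main difficulty. My first approach is to avoid $K$ altogether and argue on the analytic family $\pi:\mathcal Z\to\Delta$ directly. The class $\gamma:=c_2(\mathscr F)\,c_1(\OO(H))\in H^6(\mathcal Z;\Z)$ is a global cohomology class. For each $t\in\Delta$, including $t=0$, the number $\int_{Z_t}\gamma|_{Z_t}$ is the pairing of $\gamma$ with the fundamental class $[Z_t]\in H_6(\mathcal Z)$. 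All the fibres $Z_t$, including $Z_0$ as a Cartier divisor, are homologous in $\mathcal Z$, since each is the pullback of a point of the contractible disc $\Delta$; this uses properness of $\pi$. So the pairing is independent of $t$. On the smooth fibre it equals $\mathrm{charge}_H(\mathcal F_t)$, because $\mathscr F|_{Z_t}\simeq\mathcal F_t$. On $Z_0$ the cycle-class map sends $(c_2(\mathcal F)H_0)\cap[Z_0]$ to $\gamma|_{Z_0}\cap[Z_0]$, by compatibility of operational Chern classes with topological ones \cite[Ch.~19]{Fulton}. Either route gives \eqref{eq:charge-constant}; I would present the algebraic one and note the topological argument as the justification for passing from $K$ to the actual fibres $t\neq0$.
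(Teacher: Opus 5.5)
Your proposal is correct, and its algebraic core is the paper's own proof. The shared steps are: base change to $R=\C[[t]]$, $\operatorname{sp}[(\mathcal Z_R)_K]=[Z_0]$, commutation of $\operatorname{sp}$ with $(c_2(\mathscr F)H)\cap-$, and then taking degrees.

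The difference is in what you make explicit. The paper writes $\operatorname{sp}\bigl((c_2(\mathcal F_t)H_t)\cap[Z_t]\bigr)$ directly. This silently identifies the generic fibre over $K=\C((t))$ with an actual fibre $Z_t$, and the paper does not say why $\operatorname{sp}$ preserves degree. You justify degree preservation via compatibility of specialisation with proper push-forward to $\Spec R$ \cite[\S20.3]{Fulton}. You also rightly flag the $K$-versus-$Z_t$ identification as the delicate point.

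Your topological argument proves the proposition on its own. It pairs $c_2(\mathscr F)\,c_1(\OO(H))\in H^6(\mathcal Z;\Z)$ with fibre classes that coincide in $H_6(\mathcal Z;\Z)$, and then uses compatibility of the cycle map with Chern class operations on $Z_0$ \cite[Ch.~19]{Fulton}. It needs no formal base change, which for an analytic family over a disc would itself require an algebraisation step. The paper's route buys a cycle-level identity in $\CH_0(Z_0)$ that fits the specialisation formalism of Section~\ref{sec:CH-pushout}. Yours buys the actual link to the fibres $t\neq 0$. So it is cleaner to let the topological argument carry the proof, rather than present it as a patch to the algebraic one.

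The one step that deserves a line of justification is $[Z_t]=[Z_0]$. This must hold in ordinary homology of $\mathcal Z$; in Borel--Moore homology of the open total space both classes vanish. One way: take $M=\pi^{-1}(\overline D_r)$ with $|t|<r$, a compact manifold with boundary. By Lefschetz duality on $M$, both classes are dual to $\pi^*$ of the generator of $H^2(\overline D_r,\partial\overline D_r)$. The singular fibre contributes $[\widetilde Z_1]+[\widetilde Z_2]$ because $t=uv$ vanishes to order one along each component.
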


\begin{proof}
Base-change the family to the discrete valuation ring $R=\C[[t]]$
as in Subsection~\ref{subsec:chow-specialisation}. Since $\mathscr F$
is locally free on the smooth total space $\mathcal Z$, ordinary and
operational Chern classes coincide on $\mathcal Z$
\cite[Ch.~17]{Fulton}; in particular $c_2(\mathscr F)\in A^2(\mathcal Z)$
is a well-defined operational class. The Cartier class
$H\in\CH^1(\mathcal Z)$ may be treated in the same bivariant
formalism.
Fulton's specialisation map \cite[\S20.3]{Fulton} sends the
fundamental class of the generic fibre to the fundamental class of
the special fibre. By functoriality of operational Chow
\cite[Ch.~17]{Fulton}, specialisation commutes with the operator
\[
\alpha\longmapsto\bigl(c_2(\mathscr F)\,H\bigr)\cap\alpha.
\]
Applying this to the fundamental class of the generic fibre gives
\[
\operatorname{sp}\!\left((c_2(\mathcal F_t)\,H_t)\cap[Z_t]\right)
=
(c_2(\mathcal F)\,H_0)\cap[Z_0].
\]
Taking degrees yields \eqref{eq:charge-constant}.
\end{proof}

\begin{remark}\label{rem:ward-hypotheses}
Suppose a relative bundle $\mathscr F$ extends $\mathcal F$ and, for
$t\neq0$ small, $\mathcal F_t:=\mathscr F|_{Z_t}$ is trivial on every
real twistor line, carries the compatible quaternionic structure, and
has trivial determinant. Then the Ward--Atiyah correspondence
\cite{Ward77,WardWells90,AtiyahWard1977} gives an anti-self-dual
$SU(r)$-connection on $X_t\simeq X_1\#X_2$. The existence of the relative bundle is not a formal consequence of the fixed-fibre vanishing in \Cref{prop:DF-unobs-clean}; in the Donaldson--Friedman setting it follows from the deformation argument of \cite[\S6.3]{DF}.
\end{remark}

Consider 
\begin{equation}\label{eq:gamma-def}
\gamma(\mathcal F):=c_2(\mathcal F)\cap[Z_0]\in\CH_1(Z_0),\qquad
\gamma_i:=c_2(F_i)\cap[\widetilde Z_i]\in\CH_1(\widetilde Z_i).
\end{equation}

\begin{theorem}
\label{prop:no-neck-term-ward}
Let $\mathcal F$ be the vector bundle on
$Z_0=\widetilde Z_1\cup_Q \widetilde Z_2$ obtained by gluing 
$(F_1,F_2,\vartheta)$ as in
\Cref{lem:glued-bundle-correct}. With the notation
\eqref{eq:gamma-def}, one has
\begin{equation}\label{eq:gamma-additive}
\gamma(\mathcal F)=(\phi_1)_*\gamma_1+(\phi_2)_*\gamma_2
 \qquad\text{in }\CH_1(Z_0).
\end{equation}
Consequently, for any $H_0\in\CH^1(Z_0)$ and $H_i:=\phi_i^*H_0$,
\begin{equation}\label{eq:additivity-central}
 \deg\bigl((c_2(\mathcal F)\,H_0)\cap[Z_0]\bigr)
=
\deg\bigl((c_2(F_1)\,H_1)\cap[\widetilde Z_1]\bigr)
+
\deg\bigl((c_2(F_2)\,H_2)\cap[\widetilde Z_2]\bigr).
\end{equation}
If, moreover, $\mathcal F$ is extended by a relative bundle
$\mathscr F$ on $\mathcal Z$ as in
\Cref{prop:charge-specialises}, then for $t\neq 0$
sufficiently small,
\begin{equation}\label{eq:additivity-smooth}
\mathrm{charge}_H(\mathcal F_t)
=
\deg\bigl((c_2(F_1)\,H_1)\cap[\widetilde Z_1]\bigr)
+
\deg\bigl((c_2(F_2)\,H_2)\cap[\widetilde Z_2]\bigr).
\end{equation}
\end{theorem}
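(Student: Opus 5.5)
The core identity \eqref{eq:gamma-additive} should follow from the decomposition of the fundamental class of $Z_0$ together with the projection formula for operational classes. First, since $Z_0$ is reduced with irreducible components $\widetilde Z_1,\widetilde Z_2$, each of multiplicity one, we have
\[
[Z_0]=(\phi_1)_*[\widetilde Z_1]+(\phi_2)_*[\widetilde Z_2]\qquad\text{in }\CH_3(Z_0).
\]
This is the dimension-$3$ case of the Cartier splitting $Z_0=\widetilde Z_1+\widetilde Z_2$ used in \Cref{prop:sp-decomp}, and it also follows directly from the definition of the fundamental cycle. Next, the operational class $c_2(\mathcal F)\in A^2(Z_0)$ commutes with proper push-forward (\cite[Ch.~17]{Fulton}), and its pull-back along $\phi_i$ is $c_2(\phi_i^*\mathcal F)=c_2(F_i)$ by naturality of Chern classes together with \Cref{lem:glued-bundle-correct}. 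Hence
\[
c_2(\mathcal F)\cap(\phi_i)_*[\widetilde Z_i]=(\phi_i)_*\bigl(c_2(F_i)\cap[\widetilde Z_i]\bigr)=(\phi_i)_*\gamma_i.
\]
Summing over $i$ gives \eqref{eq:gamma-additive}. In the language of \Cref{thm:CH-pushout-generators-relations}, $c_2(\mathcal F)$ corresponds to the compatible pair $(c_2(F_1),c_2(F_2))$, and capping with $[Z_0]$ simply acts componentwise.

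For \eqref{eq:additivity-central}, I would cap \eqref{eq:gamma-additive} with $H_0$. Applying the projection formula once more gives
\[
H_0\cap(\phi_i)_*\gamma_i=(\phi_i)_*\bigl(H_i\cap\gamma_i\bigr).
\]
Because operational classes commute in Fulton's bivariant theory, $H_0\cap(c_2(\mathcal F)\cap[Z_0])$ coincides with $(c_2(\mathcal F)H_0)\cap[Z_0]$. Since $Z_0$ and the $\widetilde Z_i$ are proper, degree is preserved under proper push-forward, so $\deg\circ(\phi_i)_*=\deg$, and the two-term formula follows. Finally, \eqref{eq:additivity-smooth} is obtained by combining this with \Cref{prop:charge-specialises}. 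Here $H_0$ is the restriction of the global Cartier class $H$, and $H_i=\phi_i^*H_0$ agrees with $H|_{\widetilde Z_i}$.

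The step requiring the most care is the first, namely that no correction term supported on $Q$ appears. This needs two things to be checked. First, $[Z_0]$ carries no embedded or nonreduced contribution along the double locus; this holds because $Z_0$ is SNC, locally $\{uv=0\}$, and hence reduced. Second, the identification $\phi_i^*\mathcal F\simeq F_i$ must hold as an isomorphism of vector bundles, not only as sheaves on the components, and this is exactly what \Cref{lem:glued-bundle-correct} provides. If one wants an independent check, the Ward triviality of \Cref{lem:analytic-triviality-near-Q} shows that $\mathcal F$ is trivial near $Q$. Consequently $c_2(\mathcal F)$ restricts to zero on $Q$, which is consistent with the absence of any neck term. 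The argument above, however, does not rely on this, so the additivity holds for any gluing datum $\vartheta$.
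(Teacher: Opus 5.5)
Your proposal is correct and follows the paper's proof. It splits $[Z_0]=(\phi_1)_*[\widetilde Z_1]+(\phi_2)_*[\widetilde Z_2]$, applies the projection formula with $\phi_i^*c_2(\mathcal F)=c_2(F_i)$ from \Cref{lem:glued-bundle-correct}, caps with $H_0$ using invariance of degree under proper push-forward, and then invokes \Cref{prop:charge-specialises}. As in the paper, the Ward triviality near $Q$ plays no logical role, so your closing remark that additivity holds for any gluing datum $\vartheta$ is accurate.
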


\begin{proof}
Since $Z_0$ is the reduced union of the two irreducible components
$\widetilde Z_1$ and $\widetilde Z_2$, one has
\[
[Z_0]=(\phi_1)_*[\widetilde Z_1]+(\phi_2)_*[\widetilde Z_2]
\qquad\text{in }\CH_3(Z_0).
\]
Therefore
\[
\gamma(\mathcal F)
=c_2(\mathcal F)\cap[Z_0]
=c_2(\mathcal F)\cap(\phi_1)_*[\widetilde Z_1]
+c_2(\mathcal F)\cap(\phi_2)_*[\widetilde Z_2].
\]
By the projection formula for operational Chow groups
\cite[Ch.~17]{Fulton},
\[
c_2(\mathcal F)\cap(\phi_i)_*[\widetilde Z_i]
=(\phi_i)_*\bigl(\phi_i^*c_2(\mathcal F)\cap[\widetilde Z_i]\bigr).
\]
Since $\phi_i^*\mathcal F\simeq F_i$, functoriality of Chern classes
gives $\phi_i^*c_2(\mathcal F)=c_2(F_i)$, hence
\[
\gamma(\mathcal F)
=(\phi_1)_*\gamma_1+(\phi_2)_*\gamma_2,
\]
which is \eqref{eq:gamma-additive}. Now, 
capping \eqref{eq:gamma-additive} with $H_0$ and applying the
projection formula gives \eqref{eq:additivity-central}, and
\eqref{eq:additivity-smooth} follows from
\Cref{prop:charge-specialises}.
\end{proof}

For Ward bundles, the analytic triviality near $Q$ shows that no
additional local twisting contributes to \eqref{eq:gamma-additive}.

\section{Instantons from Hartshorne--Serre curves on the pushout}
\label{sec:serre-pushout-instantons}

The second bundle construction concerns rank-$2$ bundles obtained by the Hartshorne--Serre construction on the two branches. Under \Cref{ass:HS-clean}, their restrictions to $Q$ are trivial; hence they glue to a bundle on $Z_0$, and both the second Chern cycle and the polarised charge are additive.

\subsection{Hartshorne--Serre bundles on the pushout}
%\label{subsec:HS-data}

Let $C_i\subset\widetilde Z_i$ be pure codimension-$2$ local complete intersection curves.
Fix line bundles $L_i$ on $\widetilde Z_i$ and assume $(C_i,L_i)$
satisfy the Hartshorne--Serre hypotheses, so there exists a rank-$2$
vector bundle $E_i$ fitting into
\begin{equation}\label{eq:HS}
0\longrightarrow\mathcal O_{\widetilde Z_i}\longrightarrow E_i
\longrightarrow I_{C_i}\otimes L_i\longrightarrow 0.
\end{equation}
The Chern classes are
\begin{equation*}\label{eq:HS-Chern}
c_1(E_i)=c_1(L_i),\qquad c_2(E_i)=[C_i]\in\CH^2(\widetilde Z_i).
\end{equation*}
The gluing result is proved under the following hypothesis.

\begin{assumption} \label{ass:HS-clean}
Assume $L_i\simeq\mathcal O_{\widetilde Z_i}$ and
$C_i\cap Q=\varnothing$ for $i=1,2$.
\end{assumption}

Under \Cref{ass:HS-clean}, restricting
\eqref{eq:HS} to $Q$, and using that $L_i|_Q\simeq\mathcal O_Q$ and
$C_i\cap Q=\varnothing$, one obtains an exact sequence
\[
0\longrightarrow \mathcal O_Q
\longrightarrow E_i|_Q
\longrightarrow \mathcal O_Q
\longrightarrow 0.
\]
Its extension class lies in
$
\Ext^1_{\mathcal O_Q}(\mathcal O_Q,\mathcal O_Q)
\simeq H^1(Q,\mathcal O_Q)=0,
$
hence the sequence splits and therefore
$
E_i|_Q\simeq \mathcal O_Q^{\oplus 2}.
$
Fix an isomorphism
$$
\varphi:E_1|_Q\xrightarrow{\sim}E_2|_Q.
$$

\begin{proposition} 
\label{prop:glued-E0}
There exists a rank-$2$ vector bundle $E_0$ on $Z_0$ such that
$\phi_i^*E_0\simeq E_i$ for $i=1,2$, with the identification on $Q$
given by $\varphi$.
\end{proposition}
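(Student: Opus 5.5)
The plan is to apply the general gluing construction of \Cref{lem:glued-bundle-correct} directly, with $F_i:=E_i$, $r=2$, and gluing isomorphism $\vartheta:=\varphi$. After identifying $Q$ with $Q_1$ and writing $E_2|_Q:=\sigma^*(E_2|_{Q_2})$ as in the Ward setting, $\varphi$ is an isomorphism of rank-$2$ bundles on $Q$, so $(E_1,E_2,\varphi)$ is a $\sigma$-gluing triple. We then define $E_0$ as the kernel sheaf
\[
0\longrightarrow E_0\longrightarrow \phi_{1*}E_1\oplus\phi_{2*}E_2\xrightarrow{\ \rho_\varphi\ }\iota_*(E_1|_Q)\longrightarrow 0,
\qquad \rho_\varphi(s_1,s_2)=s_1|_Q-\varphi^{-1}(s_2|_Q),
\]
exactly as in \eqref{eq:equalizer-F}. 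Surjectivity of $\rho_\varphi$ is local. It follows because the restriction $E_1\to j_{1*}(E_1|_Q)$ is surjective, $E_1$ being locally free and $Q$ a closed subscheme; this is the same argument as in \Cref{lem:MV-End}.

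Local freeness and the identifications $\phi_i^*E_0\simeq E_i$ then follow from \Cref{lem:glued-bundle-correct}, whose proof uses only that the $F_i$ are locally free and that $\vartheta$ is an isomorphism on $Q$. Triviality of $F_i|_Q$ is not needed. That proof works near a point of $Q$ in three steps:
\begin{enumerate}
\item Write $\OO_{Z_0}\simeq\OO_{\widetilde Z_1}\times_{\OO_Q}\OO_{\widetilde Z_2}$, using the Ferrand pushout structure.
\item Lift the transition matrix of $\varphi$ to $GL_2(\OO_{\widetilde Z_2})$ after shrinking, which reduces to $\vartheta=\mathrm{Id}$.
\item Conclude that the fibre product of free modules is $\OO_{Z_0}^{\oplus2}$.
\end{enumerate}
The compatibility on $Q$ with $\varphi$ is built into the kernel description.

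One could also use the splitting $E_i|_Q\simeq\OO_Q^{\oplus2}$ from \Cref{ass:HS-clean} together with $H^0(Q,\OO_Q)=\C$, as in \Cref{lem:ward-gluing-constant}. This shows that $\varphi$ is a constant matrix in suitable trivialisations, which makes the lifting step in (2) explicit. It is not logically required.

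The main point needing care is the identification $\phi_i^*E_0\simeq E_i$. We would compute it via $\phi_i^*\OO_{Z_0}=\OO_{\widetilde Z_i}$ in the local fibre-product model. In that model the projection from the fibre product onto the $i$-th factor, tensored down, recovers $\OO_{\widetilde Z_i}^{\oplus2}$. Compatibility of these local isomorphisms on overlaps comes from the fact that they are all induced by the global projection maps $E_0\to\phi_{i*}E_i$. Hence they glue to global isomorphisms.
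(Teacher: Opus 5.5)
Your proposal is correct and is essentially the paper's argument. The paper defines $E_0:=\ker(\rho_\varphi)$ and then repeats, with $r=2$, the local proof of \Cref{lem:glued-bundle-correct}: the Ferrand fibre-product description of $\OO_{Z_0}$, lifting $g$ to $\widetilde g\in GL_2(\OO_{\widetilde Z_2})$ to reduce to $\varphi=\mathrm{Id}$, and identifying the fibre product with $\OO_{Z_0}^{\oplus 2}$; you cite the lemma instead, and your extra arguments for surjectivity of $\rho_\varphi$ and for $\phi_i^*E_0\simeq E_i$ (via the adjoints of the projections $E_0\to\phi_{i*}E_i$) fill in what the paper leaves as ``by construction''.
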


\begin{proof}
Since
\(Z_0=\widetilde Z_1\cup_Q \widetilde Z_2\)
is the Ferrand pushout of the closed immersions
\(j_i:Q\hookrightarrow \widetilde Z_i\),
a vector bundle on $Z_0$ is obtained by gluing vector bundles on the
two branches together with an identification of their restrictions to
the double locus.
Starting from the isomorphism
\[
\varphi:E_1|_Q\xrightarrow{\sim}E_2|_Q,
\]
consider the morphism
\[
\rho_\varphi:\phi_{1*}E_1\oplus\phi_{2*}E_2
\longrightarrow \iota_*(E_1|_Q)
\]
defined by
$
\rho_\varphi(s_1,s_2)
=
s_1|_Q-\varphi^{-1}(s_2|_Q),$
where $\phi_i:\widetilde Z_i\hookrightarrow Z_0$ and
$\iota:Q\hookrightarrow Z_0$ are the natural closed immersions.
Define
$
E_0:=\ker(\rho_\varphi).
$
Then $E_0$ fits into the exact sequence
\begin{equation*}
0\longrightarrow E_0
\longrightarrow \phi_{1*}E_1\oplus\phi_{2*}E_2
\xrightarrow{\ \rho_\varphi\ }
\iota_*(E_1|_Q)
\longrightarrow 0.
\end{equation*}
It remains to show that $E_0$ is locally free of rank $2$.
This is a local statement near the double locus $Q$. Away from $Q$,
the space $Z_0$ coincides with one of the two smooth branches, so
$E_0$ is evidently identified with $E_1$ or $E_2$ there.
Near a point of $Q$, the pushout structure gives
\[
\mathcal O_{Z_0}
\simeq
\mathcal O_{\widetilde Z_1}\times_{\mathcal O_Q}\mathcal O_{\widetilde Z_2}.
\]
After choosing local trivialisations
\[
E_1\simeq \mathcal O_{\widetilde Z_1}^{\oplus 2},
\qquad
E_2\simeq \mathcal O_{\widetilde Z_2}^{\oplus 2},
\]
the isomorphism $\varphi$ is represented by a matrix
$
g\in GL_2(\mathcal O_Q).
$
Since the restriction map
$
\mathcal O_{\widetilde Z_2}\twoheadrightarrow \mathcal O_Q
$
is surjective, after shrinking the neighbourhood we may lift the
entries of $g$ to a matrix $\widetilde g\in M_2(\mathcal O_{\widetilde Z_2})$.
Because $\det(g)\in \mathcal O_Q^\times$, after shrinking once more
we may assume that $\det(\widetilde g)$ is invertible, so that
$
\widetilde g\in GL_2(\mathcal O_{\widetilde Z_2}).
$
Changing the chosen trivialisation of $E_2$ by $\widetilde g^{-1}$,
we may reduce to the case $\varphi=\mathrm{Id}$.
In this gauge, the glued module becomes
\[
\mathcal O_{\widetilde Z_1}^{\oplus 2}
\times_{\mathcal O_Q^{\oplus 2}}
\mathcal O_{\widetilde Z_2}^{\oplus 2}
\cong
\left(
\mathcal O_{\widetilde Z_1}\times_{\mathcal O_Q}\mathcal O_{\widetilde Z_2}
\right)^{\oplus 2}
\cong
\mathcal O_{Z_0}^{\oplus 2}.
\]
Hence $E_0$ is locally free of rank $2$ on $Z_0$.
By construction, its pull-back to each branch recovers the original
bundle:
\[
\phi_i^*E_0\simeq E_i,
\qquad i=1,2,
\]
and the induced identification on $Q$ is precisely $\varphi$.
\end{proof}

The arguments leading to \Cref{prop:c2-pushout-general,prop:HS-charge-additivity}
require only the associated one-cycle, not the full dependence of the
operational class $c_2(E_0)\in A^2(Z_0)$ on the gluing isomorphism
$\varphi$. No additivity of $c_2(E_0)$ as an element of $A^2(Z_0)$ is
asserted: the gluing automorphism $\varphi$ may in principle contribute
a correction term supported on the double locus $Q$. The projection
formula nevertheless gives an additive formula for
\[
\gamma(E_0):=c_2(E_0)\cap[Z_0]\in\CH_1(Z_0),
\]
which is independent of the choice of $\varphi$.

\subsection{Additivity of the second Chern cycle}
%\label{subsec:neck-correction}

Since $Z_0$ is singular, $c_2(E_0)$ is an operational class,
\[
c_2(E_0)\in A^2(Z_0).
\]
Set
\[
\gamma(E_0):=c_2(E_0)\cap[Z_0]\in\CH_1(Z_0).
\]

\begin{proposition}
\label{prop:c2-pushout-general}
Let $E_0$ be the rank-$2$ vector bundle on $Z_0$ glued from
$(E_1,E_2,\varphi)$ as in \Cref{prop:glued-E0}. Then
\begin{equation}\label{eq:gamma-E0-additive}
\gamma(E_0)
=
(\phi_1)_*\bigl(c_2(E_1)\cap[\widetilde Z_1]\bigr)
+
(\phi_2)_*\bigl(c_2(E_2)\cap[\widetilde Z_2]\bigr)
\qquad\text{in }\CH_1(Z_0).
\end{equation}
Under \Cref{ass:HS-clean}, this becomes
\begin{equation}\label{eq:gamma-E0-HS}
\gamma(E_0)=\phi_{1*}[C_1]+\phi_{2*}[C_2].
\end{equation}
\end{proposition}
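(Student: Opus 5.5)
The argument follows the proof of \Cref{prop:no-neck-term-ward}, since it uses only that $E_0$ is locally free with $\phi_i^*E_0\simeq E_i$, which \Cref{prop:glued-E0} provides. We begin with the fundamental cycle of the reduced scheme $Z_0$. Its irreducible components are $\widetilde Z_1$ and $\widetilde Z_2$, each of multiplicity one, so
\[
[Z_0]=(\phi_1)_*[\widetilde Z_1]+(\phi_2)_*[\widetilde Z_2]\qquad\text{in }\CH_3(Z_0).
\]
Capping with the operational class $c_2(E_0)\in A^2(Z_0)$ is additive. For each summand we apply the projection formula for operational classes and proper morphisms \cite[Ch.~17]{Fulton}:
\[
c_2(E_0)\cap(\phi_i)_*[\widetilde Z_i]=(\phi_i)_*\bigl(\phi_i^*c_2(E_0)\cap[\widetilde Z_i]\bigr).
\]

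Next, Chern classes are natural under pull-back, so $\phi_i^*c_2(E_0)=c_2(\phi_i^*E_0)=c_2(E_i)$, using the isomorphism $\phi_i^*E_0\simeq E_i$ from \Cref{prop:glued-E0}. Substituting this into the previous display gives \eqref{eq:gamma-E0-additive}. The gluing isomorphism $\varphi$ never enters: the projection formula only sees the pull-backs to the branches. This is why no assertion about $c_2(E_0)$ itself in $A^2(Z_0)$ is needed. The cycle $\gamma(E_0)$ is independent of $\varphi$ even though the operational class might not be.

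For \eqref{eq:gamma-E0-HS}, we use the Hartshorne--Serre Chern class formula $c_2(E_i)=[C_i]$ in $\CH^2(\widetilde Z_i)$. Since $\widetilde Z_i$ is smooth, capping with the fundamental class identifies this with $c_2(E_i)\cap[\widetilde Z_i]=[C_i]\in\CH_1(\widetilde Z_i)$. It remains to check this formula. From \eqref{eq:HS} we have $c(E_i)=c(I_{C_i}\otimes L_i)$. Under \Cref{ass:HS-clean}, $L_i$ is trivial, and by Riemann--Roch without denominators (or the standard local computation for an lci curve of codimension two) $c_2(I_{C_i})=[C_i]$. The hypothesis $C_i\cap Q=\varnothing$ is not needed for this formula; it was only used to make $E_i|_Q$ trivial so that the gluing of \Cref{prop:glued-E0} exists.

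The only step needing some care is the legitimacy of the projection formula for $\phi_i$, a closed immersion into a singular scheme. This is built into Fulton's bivariant axioms: operational classes commute with proper push-forward, which is exactly the projection formula. So the obstacle is one of bookkeeping rather than of substance.
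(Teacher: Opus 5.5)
Your proposal is correct and follows the same route as the paper. You decompose $[Z_0]=(\phi_1)_*[\widetilde Z_1]+(\phi_2)_*[\widetilde Z_2]$, apply the operational projection formula to the closed immersions $\phi_i$, use $\phi_i^*c_2(E_0)=c_2(\phi_i^*E_0)=c_2(E_i)$, and substitute $c_2(E_i)=[C_i]$ on the smooth branches. The only addition is your justification of the Hartshorne--Serre formula $c_2(E_i)=[C_i]$ via $c(E_i)=c(I_{C_i}\otimes L_i)$ and $c_2(I_{C_i})=[C_i]$, which the paper simply quotes; this formula in fact holds for arbitrary $L_i$, since twisting a rank-one sheaf by a line bundle does not change $c_2$.
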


\begin{proof}
Since $Z_0$ is the reduced union of the two irreducible components
$\widetilde Z_1$ and $\widetilde Z_2$, one has
\[
[Z_0]=(\phi_1)_*[\widetilde Z_1]+(\phi_2)_*[\widetilde Z_2]
\qquad\text{in }\CH_3(Z_0).
\]
Therefore
\[
\gamma(E_0)
=
c_2(E_0)\cap[Z_0]
=
c_2(E_0)\cap(\phi_1)_*[\widetilde Z_1]
+
c_2(E_0)\cap(\phi_2)_*[\widetilde Z_2].
\]
By the projection formula for operational Chow groups,
\[
c_2(E_0)\cap(\phi_i)_*[\widetilde Z_i]
=
(\phi_i)_*\bigl(\phi_i^*c_2(E_0)\cap[\widetilde Z_i]\bigr).
\]
Since $\phi_i^*E_0\simeq E_i$, functoriality of Chern classes gives
$
\phi_i^*c_2(E_0)=c_2(E_i).
$
Hence
\[
\gamma(E_0)
=
(\phi_1)_*\bigl(c_2(E_1)\cap[\widetilde Z_1]\bigr)
+
(\phi_2)_*\bigl(c_2(E_2)\cap[\widetilde Z_2]\bigr),
\]
which is \eqref{eq:gamma-E0-additive}. Under
\Cref{ass:HS-clean}, one has $c_2(E_i)=[C_i]$ on the smooth
threefold $\widetilde Z_i$, so \eqref{eq:gamma-E0-HS} follows.
\end{proof}

For specialisation and charge, only the cycle $\gamma(E_0)=c_2(E_0)\cap[Z_0]\in\CH_1(Z_0)$ is used; no separate description of the operational class $c_2(E_0)\in A^2(Z_0)$ is required.

\subsection{Deformation, charge, and Ward instantons}
%\label{subsec:deformation-charge}

We apply \Cref{prop:charge-specialises} to $E_0$.

\begin{assumption}\label{ass:DEF}
There exists a vector bundle $E$ on $\mathcal Z$, flat over $\Delta$,
with $E|_{Z_0}\simeq E_0$ and $E|_{Z_t}\simeq E_t$ for $t\ne 0$.
\end{assumption}

\begin{theorem}
\label{prop:HS-charge-additivity}
Under \Cref{ass:HS-clean} and \Cref{ass:DEF}, for $t\ne 0$
sufficiently small,
\begin{equation}\label{eq:charge-specialises}
\mathrm{charge}_H(E_t)
=\deg\bigl((c_2(E_0)\,H_0)\cap[Z_0]\bigr).
\end{equation}
Moreover,
\begin{equation}\label{eq:charge-additive}
\mathrm{charge}_H(E_t)
=
\deg\bigl((c_2(E_1)\,H_1)\cap[\widetilde Z_1]\bigr)
+
\deg\bigl((c_2(E_2)\,H_2)\cap[\widetilde Z_2]\bigr),
\end{equation}
where $H_i:=\phi_i^*H_0$.
\end{theorem}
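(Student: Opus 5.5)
The plan is to deduce the statement directly from two earlier results: \Cref{prop:charge-specialises}, which handles the specialisation step, and \Cref{prop:c2-pushout-general}, which handles additivity. No new geometric input should be needed. \Cref{ass:HS-clean} is used only to ensure that $E_0$ exists via \Cref{prop:glued-E0}. \Cref{ass:DEF} provides the relative bundle required for specialisation.

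For \eqref{eq:charge-specialises}, I will apply \Cref{prop:charge-specialises} with $\mathscr F:=E$ and $\mathcal F:=E_0$. Its hypotheses are exactly \Cref{ass:DEF}: $E$ is locally free on the smooth total space $\mathcal Z$, is flat over $\Delta$, and restricts to $E_0$ and $E_t$. The argument there base-changes to $R=\C[[t]]$ and uses that Fulton's specialisation sends $[Z_t]$ to $[Z_0]$. It then uses that specialisation commutes with capping by the operational class $c_2(E)\,H$, by \Cref{lem:chern-commutes-sp} together with the Cartier class $H$. Applied to the present family, this gives $\operatorname{sp}\bigl((c_2(E_t)H_t)\cap[Z_t]\bigr)=(c_2(E_0)H_0)\cap[Z_0]$. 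Since specialisation preserves degrees of zero-cycles (the family is proper), taking degrees yields \eqref{eq:charge-specialises}.

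For \eqref{eq:charge-additive}, I will start from $\gamma(E_0)=c_2(E_0)\cap[Z_0]$ and the decomposition \eqref{eq:gamma-E0-additive} of \Cref{prop:c2-pushout-general}. Capping with $H_0$ and using commutativity of operational classes gives $(c_2(E_0)H_0)\cap[Z_0]=H_0\cap\gamma(E_0)$. The projection formula $H_0\cap(\phi_i)_*\beta=(\phi_i)_*(\phi_i^*H_0\cap\beta)$ then splits this into $\sum_i(\phi_i)_*\bigl((c_2(E_i)H_i)\cap[\widetilde Z_i]\bigr)$, with $H_i=\phi_i^*H_0$. Proper push-forward preserves degree, so taking degrees and combining with \eqref{eq:charge-specialises} gives \eqref{eq:charge-additive}. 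This mirrors the last step of \Cref{prop:no-neck-term-ward}. Under \Cref{ass:HS-clean} one can further rewrite each summand as $\deg(H_i\cdot[C_i])$ using \eqref{eq:gamma-E0-HS}, though the statement does not require this.

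The only point needing care is the first step: checking that the bivariant formalism legitimately lets specialisation commute with the product $c_2(E)\,H$ when $H$ is merely a Cartier class on $\mathcal Z$ rather than an operational class pulled back from somewhere else. I would handle this by noting that $H=c_1(\OO_{\mathcal Z}(H))$ is itself an operational class on $\mathcal Z$. Then \Cref{lem:chern-commutes-sp} can be applied twice, once for $c_1$ and once for $c_2$, exactly as in \Cref{prop:practical-lifting}. Everything else is formal. In particular, the dependence of $c_2(E_0)\in A^2(Z_0)$ on $\varphi$ is irrelevant, because only the capped cycle enters.
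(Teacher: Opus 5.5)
Your proposal is correct and is essentially the paper's proof: the paper also gets \eqref{eq:charge-specialises} by applying \Cref{prop:charge-specialises} to the relative bundle $E$ of \Cref{ass:DEF}, and then gets \eqref{eq:charge-additive} by capping the decomposition of \Cref{prop:c2-pushout-general} with $H_0$ and using the projection formula. Your extra remarks only make explicit details the paper leaves implicit in \Cref{prop:charge-specialises} and in its projection-formula step: writing $H$ as $c_1$ of a line bundle so that \Cref{lem:chern-commutes-sp} applies twice, and commuting $c_2(E_0)$ with $H_0$.
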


\begin{proof}
Equation \eqref{eq:charge-specialises} follows from \Cref{prop:charge-specialises} applied to $E_0$.
By \Cref{prop:c2-pushout-general},
\[
c_2(E_0)\cap[Z_0]
=
(\phi_1)_*\bigl(c_2(E_1)\cap[\widetilde Z_1]\bigr)
+
(\phi_2)_*\bigl(c_2(E_2)\cap[\widetilde Z_2]\bigr).
\]
Capping with $H_0$ and using the projection formula gives
\[
\deg\bigl((c_2(E_0)\,H_0)\cap[Z_0]\bigr)
=
\deg\bigl((c_2(E_1)\,H_1)\cap[\widetilde Z_1]\bigr)
+
\deg\bigl((c_2(E_2)\,H_2)\cap[\widetilde Z_2]\bigr),
\]
which is \eqref{eq:charge-additive}.
\end{proof}

\begin{corollary} 
\label{cor:instantons-HS}
Assume that, for all sufficiently small \(t\neq 0\), the bundle
\(E_t\) on \(Z_t\) satisfies the following conditions:
\begin{enumerate}
\item \(E_t\) restricts trivially to every real twistor line in \(Z_t\);
\item \(E_t\) carries a quaternionic structure compatible with the real
structure of \(Z_t\);
\item \(\det E_t\simeq \OO_{Z_t}\).
\end{enumerate}
Then, by the Ward--Atiyah correspondence
\cite{Ward77,WardWells90,AtiyahWard1977}, the bundle \(E_t\)
determines an anti-self-dual \(SU(2)\)-connection on the underlying
oriented Riemannian \(4\)-manifold
\[
X_t\simeq X_1\# X_2.
\]
Moreover, its polarised charge is given by
\eqref{eq:charge-additive}.
\end{corollary}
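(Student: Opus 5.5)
The corollary has two halves, and I would treat them separately: an instanton statement that is essentially a citation, and a charge formula that is already proved.

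\emph{Step 1: Ward--Atiyah input.} For fixed small $t\neq0$, the fibre $Z_t$ is a twistor space of $X_t\simeq X_1\#X_2$ (Donaldson--Friedman), and it comes with its real structure and twistor fibration $Z_t\to X_t$. Hypotheses (1)--(3) are precisely the hypotheses of the Ward--Atiyah correspondence for $SU(2)$:
\begin{itemize}
\item triviality on real twistor lines lets one define the rank-$2$ bundle $V_x:=H^0(\pi_t^{-1}(x),E_t)$ over $X_t$;
\item the Penrose transform (holomorphic structure along the fibres of the correspondence space) supplies a connection on $V$ whose curvature is anti-self-dual;
\item the quaternionic structure gives a compatible Hermitian metric, so the connection is unitary;
\item the condition $\det E_t\simeq\OO_{Z_t}$ reduces the structure group from $U(2)$ to $SU(2)$.
\end{itemize}
I would cite \cite{AtiyahWard1977,WardWells90} for this rather than reproduce it, checking only that the underlying $4$-manifold of $Z_t$ is $X_1\#X_2$ with the orientation induced from the twistor fibration.

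\emph{Step 2: the charge.} The equality of $\mathrm{charge}_H(E_t)$ with the sum of the component terms is exactly \eqref{eq:charge-additive} from \Cref{prop:HS-charge-additivity}. That result holds under \Cref{ass:HS-clean} and \Cref{ass:DEF}, which are implicitly in force here because $E_t$ is defined as the restriction of the relative bundle $E$. So the ``moreover'' clause follows by quoting that theorem; no new computation is needed.

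\emph{Main obstacle: making Step 1 hold for every small $t$.} The ASD condition and the quaternionic structure are real conditions on each fibre. One must make sure the real structures on $Z_t$ come from the Donaldson--Friedman family, which they do since $\sigma$ commutes with the real structures. Because the corollary states (1)--(3) as hypotheses, this reduces to quoting the correspondence fibrewise.

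I would add one remark. Identifying the polarised charge with the topological instanton number $c_2(V)[X_t]$ is a separate statement, and I would not assert it. The charge as defined in \Cref{def:polarised-charge-clean} is what \eqref{eq:charge-additive} computes.
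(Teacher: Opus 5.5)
Your proposal is correct and matches the paper's proof, which cites the Ward--Atiyah correspondence for the $SU(2)$-instanton under hypotheses (1)--(3). For the charge, the paper quotes \eqref{eq:charge-additive} from \Cref{prop:HS-charge-additivity}, with \Cref{ass:HS-clean} and \Cref{ass:DEF} implicitly in force, as you say; your sketch of the transform and your caution about not identifying the polarised charge with the topological instanton number are harmless additions.
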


\begin{proof}
The three assumptions are the standard hypotheses under which the Ward transform associates to \(E_t\) an anti-self-dual
\(SU(2)\)-connection on \(X_t\); see
\cite{Ward77,WardWells90,AtiyahWard1977}. The formula for the
polarised charge is given by \eqref{eq:charge-additive}.
\end{proof}

In the Donaldson--Friedman construction, the bundle-deformation argument of \cite[\S6.3]{DF} ensures persistence of the Ward--Atiyah hypotheses along the smoothing.

\paragraph{Further questions.}
The Hartshorne--Serre construction in \Cref{ass:HS-clean} assumes that the curves do not meet $Q$. For curves meeting the double locus, the restricted
fixed-phase bundle of \Cref{prop:KN-circle-on-c} provides additional
boundary information not detected by the Chow class alone. It remains to determine how this phase information enters a more
general gauge-theoretic gluing problem.

\section*{Acknowledgements}
Both authors are partially supported by the PRIN 2022MWPMAB ``Interactions between Geometric
Structures and Function Theories'', and by INdAM-GNSAGA.

\end{document}